\newtheorem{thm}{Theorem}[section]
\newtheorem{cor}[thm]{Corollary}
\newtheorem{lem}[thm]{Lemma}
\newtheorem{prop}[thm]{Proposition}
\newtheorem{defin}[thm]{Definition}
\newtheorem{def-lem}[thm]{Definition-Lemma}
\newtheorem{conj}[thm]{Conjecture}
\theoremstyle{remark}
\newtheorem{warning}[thm]{Warning}
\newtheorem{rem}[thm]{Remark}
\newtheorem{example}[thm]{Example}
\newtheorem{ques}{Question}
\newtheorem{assu}{Assumption}
\numberwithin{equation}{section}
\newcommand{\bbC}{\mathbb{C}}
\newcommand{\bbF}{\mathbb{F}}
\newcommand{\bbM}{\mathbb{M}}
\newcommand{\bbR}{\mathbb{R}}
\newcommand{\bbS}{\mathbb{S}}
\newcommand{\bbX}{\mathbb{X}}
\newcommand{\bbZ}{\mathbb{Z}}
\newcommand{\scrE}{\mathscr{E}}
\newcommand{\scrP}{\mathscr{P}}
\newcommand{\calC}{\mathcal{C}}
\newcommand{\calD}{\mathcal{D}}
\newcommand{\calF}{\mathcal{F}}
\newcommand{\calM}{\mathcal{M}}
\newcommand{\calO}{\mathcal{O}}
\newcommand{\calP}{\mathcal{P}}
\newcommand{\calT}{\mathcal{T}}
\newcommand{\frakF}{\mathfrak{F}}
\newcommand{\frakM}{\mathfrak{M}}
\newcommand{\frakR}{\mathfrak{R}}
\newcommand{\frakW}{\mathfrak{W}}
\newcommand{\frakY}{\mathfrak{Y}}
\newcommand{\frakZ}{\mathfrak{Z}}
\DeclareMathOperator{\codim}{codim}
\DeclareMathOperator{\coker}{coker}
\DeclareMathOperator{\crit}{Crit}
\DeclareMathOperator{\grad}{grad}
\DeclareMathOperator{\ind}{ind}
\DeclareMathOperator{\ls}{LS}
\DeclareMathOperator{\sing}{Sing}
\newcommand{\flow}{\mathrm{Flow}}
\newcommand{\fr}{\mathrm{fr}}
\newcommand{\fred}{\mathrm{Fred}}
\newcommand{\ho}{\mathrm{Ho}}
\newcommand{\identity}{\mathrm{Id}}
\newcommand{\loc}{\mathrm{loc}}
\newcommand{\spectra}{\mathrm{Sp}}
\newcommand{\sq}{\mathrm{Sq}}
\newcommand{\sset}{\mathrm{sSet}}
\newcommand{\std}{\mathrm{std}}
\renewcommand{\th}{\mathrm{Th}}
\newcommand{\unit}{\mathds{1}}
\newcommand{\abs}[1]{\lvert#1\rvert}
\newcommand{\brak}[1]{\langle#1\rangle}
\title[Floer homotopy theory and degenerate Lagrangian intersections]{Floer homotopy theory \\ and degenerate Lagrangian intersections}
\author{Kenneth Blakey}
\address{Department of Mathematics, MIT, 182 Memorial Drive, Cambridge, MA 02139, U.S.A.} \email{kblakey@mit.edu}
\begin{document}

\begin{abstract} 
We give a lower bound on the number of intersection points of a Lagrangian pair via Steenrod squares on Lagrangian Floer cohomology induced from a Floer homotopy type. The main technical input is a computation of the associated graded of the action-filtration of the Floer homotopy type in terms of Morse homotopy theory (precisely, Conley index theory). We also prove a lower bound using the quantum cap product on Lagrangian Floer cohomology.
\end{abstract}

\maketitle
\tableofcontents

\section{Introduction}
\subsection{Overview}
Let $(M,\omega)$ be a symplectic $2n$-manifold and $(L_0,L_1)$ a pair of Lagrangians in $M$. A fundamental question in symplectic geometry is:

\begin{ques}\label{ques:overview}
How many points does $(L_0,L_1)$ have to intersect in?
\end{ques}

This question has been well-studied and we will give a brief and non-exhaustive review shortly, but, before our review, we will quickly describe the present article's contribution. By using Steenrod squares on the Lagrangian Floer cohomology of $(L_0,L_1)$ via a Floer homotopy type (which exists under Assumption \ref{assu:main}), we establish a lower bound on the number of (possibly degenerate) intersection points of $(L_0,L_1)$. Moreover, we apply our main result to infinitely many examples to improve the lower bound on the number of intersection points; all previously known results, and a new ``quantum cap-length'' estimate, cannot detect these improved lower bounds in the aforementioned examples. A sketch of the construction of our examples is as follows, cf. Subsection \ref{subsec:application}. Consider an embedding $\iota:\bbC P^2\hookrightarrow S^7$.\footnote{The existence of such an embedding is given by \cite[Theorem A]{Fuq94}.} We may plumb together two disjoint copies of the cotangent bundle $T^*S^7$ along $n$-many disjoint copies of $\bbC P^2$; this process will result in a symplectic 14-manifold together with a Lagrangian pair $(L_0,L_1)$, each diffeomorphic to $S^7$, which intersect cleanly in $n$-many disjoint copies of $\bbC P^2$. With respect to a natural framed brane structure on $(L_0,L_1)$, the Floer homotopy type of $(L_0,L_1)$ will be 
    \begin{equation}
    \Sigma^\infty_+\big(\bbC P^2\vee\Sigma^8\bbC P^2\vee\cdots\vee\Sigma^{8(n-1)}\bbC P^2\big).
    \end{equation}
Via elementary methods, any compactly supported Hamiltonian deformation of $(L_0,L_1)$ which intersects in a discrete set must do so in at least $n$ (possibly degenerate) points of pairwise distinct action. Since $\sq^2$ is non-vanishing on $n$ distinct Lagrangian Floer cohomology classes of $(L_0,L_1)$ (which satisfy a degree condition), it follows from Theorem \ref{thm:main2} that any compactly supported Hamiltonian deformation of $(L_0,L_1)$ which intersects in a discrete set must do so in at least $2n$ (possibly degenerate) points of pairwise distinct action.

We will now give a brief and non-exhaustive review of previous works.

\begin{assu}\label{assu:overview}
During this review, we will consider the following collection of assumptions.
\begin{enumerate}
\item $(M,\omega)$ is either closed or a Liouville manifold.
\item $L_i$, $i\in\{1,2\}$, is connected, closed, and \emph{relatively exact}.\footnote{Recall, relatively exact means $\omega\cdot\pi_2(M,L)=0$.}
\item $(L_0,L_1)$ is Hamiltonian isotopic via a compactly supported Hamiltonian isotopy.
\end{enumerate}
\end{assu}

\begin{conj}[Arnol'd Conjecture]
Suppose Assumption \ref{assu:overview}, then we have the lower bound 
    \begin{equation}
    \abs{L_0\cap L_1}\geq\crit(L_0), 
    \end{equation}
where 
    \begin{equation}
    \crit(L_0)\equiv\min\big\{\abs{\crit(f)}:f\in C^\infty(L_0)\big\}.
    \end{equation}
\end{conj}

\begin{rem}
An application of the Weinstein neighborhood theorem shows that, if the Arnol'd conjecture is true, then the lower bound must be sharp.
\end{rem}

In the case of the 0-section of a cotangent bundle of a closed smooth manifold,
the (degenerate) Arnol’d conjecture is essentially known, cf. \cite{LS85}.

Under transversality hypotheses, Floer had proven the following.

\begin{thm}[Theorem 1 \cite{Flo88b}]
Suppose Assumption \ref{assu:overview} and that $L_0$ intersects $L_1$ transversely, then we have the lower bound
    \begin{equation}
    \abs{L_0\cap L_1}\geq\sum\operatorname{rank}H_j(L_0;\bbZ/2). 
    \end{equation}
\end{thm}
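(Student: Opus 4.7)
The plan is to define Lagrangian Floer cohomology $HF^*(L_0,L_1;\bbZ/2)$ for the transverse pair, show it is invariant under compactly supported Hamiltonian isotopy of $L_1$, and compute it as $H^*(L_0;\bbZ/2)$ by reduction to Morse theory on $L_0$; the desired bound follows immediately by comparing $\bbZ/2$-dimensions.

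First, under Assumption \ref{assu:overview} with transverse intersection, build the cochain complex $CF^*(L_0,L_1;\bbZ/2)$ freely generated by the finite set $L_0\cap L_1$, with differential counting, modulo $2$, rigid (virtual-dimension-one) $J$-holomorphic strips in $M$ with boundary on $L_0\cup L_1$ and asymptotic to prescribed intersection points. The relative exactness hypothesis $\omega\cdot\pi_2(M,L_0)=0$ is the crucial analytic input: it forces every non-constant $J$-holomorphic disk with boundary on $L_i$ to have zero symplectic area and hence be constant, ruling out disk bubbling in the Gromov--Floer compactification of the strip moduli spaces. Energies of strips are a priori bounded by action differences on this relatively exact setup, so the usual transversality-and-compactness package produces a well-defined cohomology $HF^*(L_0,L_1;\bbZ/2)$ of total $\bbZ/2$-dimension at most $\abs{L_0\cap L_1}$.

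Next, standard continuation-map arguments imply that $HF^*(L_0,-;\bbZ/2)$ is invariant under compactly supported Hamiltonian deformations of the second input, so it suffices to compute the right-hand side in a convenient model. Choose a $C^2$-small Morse function $f:L_0\to\bbR$; use the Weinstein neighborhood theorem to identify a tubular neighborhood of $L_0$ in $M$ with a neighborhood of the zero section in $T^*L_0$; and replace $L_1$ by the Hamiltonian-isotopic Lagrangian $\mathrm{graph}(df)$. The intersection points are then the critical points of $f$, and Floer's thin-strip theorem identifies, for sufficiently small $f$, every finite-energy $J$-holomorphic strip between two such critical points with a perturbed negative gradient trajectory. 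Thus the Floer differential reduces to the Morse differential, yielding $HF^*(L_0,L_1;\bbZ/2)\cong H^*(L_0;\bbZ/2)$, and hence $\abs{L_0\cap L_1}\geq\dim_{\bbZ/2}HF^*(L_0,L_1;\bbZ/2)=\sum\operatorname{rank}H_i(L_0;\bbZ/2)$.

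The main obstacle is this thin-strip identification in the cotangent bundle, together with the transversality theory needed to cut out the strip moduli spaces so that the Floer differential is defined in the first place. Relative exactness supplies the critical no-bubbling input for compactness, but the interior analysis that turns low-energy pseudo-holomorphic strips into Morse trajectories of $f$ is the technical heart of Floer's original proof.
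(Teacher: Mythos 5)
The paper cites this result as Theorem 1 of Floer's 1988 paper and does not give a proof, so there is no in-paper argument to compare against. Your sketch correctly reconstructs Floer's original strategy — exactness kills disk (and sphere) bubbling, giving a well-defined $\bbZ/2$-Floer cohomology of dimension at most $\abs{L_0\cap L_1}$; continuation maps give Hamiltonian invariance; and for $L_1=\mathrm{graph}(df)$ with $f$ a $C^2$-small Morse function, Floer's thin-strip theorem identifies holomorphic strips in $T^*L_0$ with negative gradient trajectories of $f$, so $HF^*\cong H^*(L_0;\bbZ/2)$ — which is precisely the chain of ideas the present paper reuses in its Sections 2 and 6 (the functional-analytic Morse framework and the Floer-to-Morse correspondence around Proposition \ref{prop:floermorsecorrespondence}). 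The only cosmetic point is that you phrase the conclusion in cohomology while the statement is in homology; over $\bbZ/2$ the total ranks agree, so this is harmless.
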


Both Hofer and Floer independently established a lower bound using Lusternik-Schnirelmann (LS) theory; this is a tool used to study the minimal number of critical points any smooth function on a smooth manifold can possibly have which, in direct contrast to Morse theory, does not use non-degeneracy hypotheses. The \emph{LS category} of a space $Y$ is 
    \begin{equation}
    \ls(Y)\equiv\inf_k\big\{Y=U_0\cup\cdots\cup U_k:U_j\subset Y\;\textrm{open and contractible}\big\}.\footnote{The definition of $\ls(Y)$ differs in the literature by adding $\pm1$. In particular, the definition of $\ls(Y)$ in \cite{Pea94}, a reference we use extensively in Section \ref{sec:conleyindextheory}, is $+1$ higher than the definition in the present article.}
    \end{equation} 
The LS category of a space is notoriously difficult to compute, however, it has a simple lower bound. Let $R$ be a commutative ring. We define the \emph{$R$-cup-length} of $Y$ as
    \begin{equation}
    c_R(Y)\equiv\inf_k\big\{\forall\alpha_1,\ldots,\alpha_k\in\widetilde{H}^*(Y;R),\alpha_1\smile\cdots\smile\alpha_k=0\big\},\footnote{The definition of $c_R(Y)$ differs in the literature by adding $\pm1$. In particular, the definition in the present article agrees with the definition in \cite{Hof88,HP22}.}
    \end{equation}
and it is an elementary exercise to show 
    \begin{equation}
    c_R(Y)-1\leq\ls(Y).
    \end{equation}
We refer to \cite{CLOT03} for more details concerning LS theory.

\begin{thm}[Theorem 3 \cite{Hof88}, Theorem 1 \cite{Flo89a}]\label{thm:cuplength}
Suppose Assumption \ref{assu:overview}, then we have the lower bound 
    \begin{equation}
    \abs{L_0\cap L_1}\geq c_{\bbZ/2}(L_0).
    \end{equation}
\end{thm}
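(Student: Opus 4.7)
The plan is to adapt the classical Lusternik--Schnirelmann cup-length argument---which bounds $\#\crit(f) \geq c_{\bbZ/2}(L_0)$ for any smooth $f: L_0 \to \bbR$---to the infinite-dimensional setting of the symplectic action functional $\calA$ on the path space $\calP(L_0, L_1) := \{\gamma : [0,1] \to M : \gamma(0) \in L_0,\ \gamma(1) \in L_1\}$. Critical points of $\calA$ are exactly the constant paths at intersection points of $L_0 \cap L_1$, so any lower bound on $\#\crit(\calA)$ translates directly to one on $\abs{L_0 \cap L_1}$.

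The first step is to establish a Palais--Smale-type deformation theorem for $\calA$ on $\calP(L_0, L_1)$, which is the main technical input. Since $\calA$ is indefinite with infinite Morse index, classical gradient-flow theory fails; Floer's pseudo-gradient flow, built from moduli spaces of pseudoholomorphic strips joining intersection points of $L_0$ and $L_1$, serves as a substitute. Under Assumption \ref{assu:overview}, relative exactness of $L_0$ rules out disk bubbling and the closed or Liouville hypothesis on $M$ ensures the necessary moduli-space compactness, yielding a deformation lemma adequate for LS minimax. Next, the constant-path inclusion $L_0 \hookrightarrow \calP(L_0, L_1)$ is a homotopy equivalence (using the compactly supported Hamiltonian isotopy of Assumption \ref{assu:overview}(2) to contract each fiber to a point), so $\tilde{H}^*(\calP(L_0, L_1); \bbZ/2) \cong \tilde{H}^*(L_0; \bbZ/2)$ as rings, matching cup lengths.

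Setting $k := c_{\bbZ/2}(L_0) - 1$, pick $\alpha_1, \ldots, \alpha_k \in \tilde{H}^*(\calP(L_0, L_1); \bbZ/2)$ with nonvanishing cup product. Define LS minimax values
\begin{equation}
\rho_j := \inf\bigl\{ \sup_{\gamma \in A} \calA(\gamma) : A \subset \calP(L_0, L_1),\ \operatorname{cat}_{\calP(L_0, L_1)}(A) \geq j+1 \bigr\}, \quad 0 \leq j \leq k.
\end{equation}
The cup-length lower bound on LS category guarantees subsets of category $\geq j+1$ exist (e.g.\ carriers of classes dual to $\alpha_1 \smile \cdots \smile \alpha_j$), so each $\rho_j$ is finite. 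The deformation lemma ensures each $\rho_j$ is a critical value of $\calA$, and at coincident minimax levels $\rho_j = \rho_{j+1}$ the classical LS inequality $\operatorname{cat}_{\calP}(\crit(\calA) \cap \calA^{-1}(\rho_j)) \geq 2$ produces multiple critical points there. Accumulating multiplicities over coincident $\rho_j$'s yields $\#\crit(\calA) \geq k+1 = c_{\bbZ/2}(L_0)$, hence $\abs{L_0 \cap L_1} \geq c_{\bbZ/2}(L_0)$.

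The principal obstacle is the first step: building a satisfactory pseudo-gradient / deformation theory for $\calA$ on $\calP(L_0, L_1)$, given the indefinite action and the possibility of degenerate (non-isolated) critical sets. Hofer and Floer surmounted this independently, both leveraging Floer-theoretic ideas (Hofer via truncation-based gradient-like vector fields on a Hilbert-manifold completion of $\calP(L_0, L_1)$, Floer via the action filtration on Lagrangian Floer cohomology), but both routes require delicate analytic estimates to rule out non-compactness coming from bubbling and from the infinite-dimensional Hessian kernel of $\calA$.
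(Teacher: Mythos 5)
The paper states Theorem~\ref{thm:cuplength} as cited background and does not give a direct proof; it does, however, note in Subsection~1.2 that the quantum cap-length estimate (Theorem~\ref{thm:caplength}, proved in Appendix~\ref{appendix:caplength}) specializes to Theorem~\ref{thm:cuplength} via the Lagrangian PSS isomorphism. That implicit argument is Floer-theoretic and algebraic: filter the Floer cochain complex by action, define a chain-level quantum cap product by counting pseudoholomorphic strips with a boundary marked-point constraint, and observe that capping with a positive-degree singular class on $L_i$ strictly lowers the action filtration level, which forces at least $c_{\bbZ/2}(L_0)$ distinct action values. Your proposal is the other classical route, essentially Hofer's: a Lusternik--Schnirelmann minimax argument for the action functional $\calA$ directly on the path space. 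These are genuinely different strategies. The Floer/cap-product route pushes the analysis into moduli-space compactness (which one must establish anyway to define Floer cohomology) and, as the paper exploits, generalizes to Lagrangian pairs that are not Hamiltonian isotopic; your variational route is conceptually more elementary but concentrates all the difficulty in the deformation lemma for the indefinite, infinite-Morse-index functional $\calA$---exactly the obstacle you flag and leave unfilled.

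Two corrections to the details of your sketch. First, the map $L_0 \to \calP(L_0, L_1)$ that induces the cohomology identification is not a ``constant-path inclusion'' (constant paths exist in $\calP(L_0, L_1)$ only over $L_0 \cap L_1$); it is $p \mapsto (t \mapsto \psi_t(p))$, where $\psi$ is the Hamiltonian isotopy with $\psi_1(L_0) = L_1$, and one must restrict to the connected component of $\calP(L_0, L_1)$ containing the image of this map, since the full path space is generally not homotopy equivalent to $L_0$. Second, finiteness of the minimax values $\rho_j$ is not automatic because $\calA$ is unbounded above and below; bounding them is part of the same compactness/deformation analysis and deserves to be flagged alongside the Palais--Smale issue rather than folded into ``subsets of the required category exist.'' Neither defect changes the viability of the strategy, but both are load-bearing steps in the actual Hofer and Floer arguments.
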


Recent work of Hirschi-Porcelli has generalized the previous theorem using Floer homotopy theory. Let $\frakR$ be a ring spectrum. Again, we may define the \emph{$\frakR$-cup-length} of $Y$ as
    \begin{equation}
    c_\frakR(Y)\equiv\inf_k\big\{\forall\alpha_1,\ldots,\alpha_k\in \widetilde{\frakR}^*(Y),\alpha_1\smile\cdots\smile\alpha_k=0\big\}, 
    \end{equation}
where $\widetilde{\frakR}^*$ is the (reduced) multiplicative cohomology theory determined by $\frakR$. Let $E\to Y$ be a real vector bundle, where we now assume $Y$ is homotopy equivalent to a finite CW-complex. Recall, the Thom space of $E$ is the pointed space $\th(E)$ defined as the quotient space of the disk bundle of $E$ by the sphere bundle of $E$. Also, recall the Thom spectrum of $E$ is simply the suspension spectrum $Y^E\equiv\Sigma^\infty\th(E)$. The notion of Thom space/spectrum extends to real virtual bundles over $Y$ and, in both cases, is independent of all choices in the construction up to homotopy. We say $E$ is \emph{$\frakR$-orientable} if there is a map 
    \begin{equation}
    o:Y^E\to\Sigma^{\operatorname{rank} E}\frakR,
    \end{equation}
called a Thom class of $E$, such that the restriction to a fiber,
    \begin{equation}
    o\vert_y:\Sigma^{\operatorname{rank} E}\bbS\to\Sigma^{\operatorname{rank} E}\frakR,
    \end{equation}
represents plus or minus the identity of $\frakR$ after (de)suspension. Here, $\bbS$ is the sphere spectrum and, when $E$ is a real virtual bundle, $\operatorname{rank}E$ should be interpereted as the virtual rank.

\begin{thm}[Theorem 1.9 \cite{HP22}]
Suppose Assumption \ref{assu:overview} and that the index bundles over the various Floer moduli spaces of pseudo-holomorphic curves appearing are $\frakR$-orientable, then we have the lower bound 
    \begin{equation}
    \abs{L_0\cap L_1}\geq c_\frakR(L_0). 
    \end{equation}
\end{thm}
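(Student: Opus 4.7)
The plan is to extend the Hofer-Floer cup-length bound (Theorem \ref{thm:cuplength}) from singular cohomology with $\bbZ/2$-coefficients to $\frakR$-cohomology by promoting Lagrangian Floer cohomology to a genuine $\frakR$-module Floer homotopy type and upgrading the PSS isomorphism to a multiplicative equivalence of ring spectra.

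First, using the $\frakR$-orientability of the Floer index bundles, I would construct an $\frakR$-module spectrum $\frakF(L_0,L_1)$ from the flow category of Floer trajectories, in the spirit of Cohen-Jones-Segal and Abouzaid-Blumberg. After choosing a compactly supported Hamiltonian perturbation making the intersection transverse, the resulting spectrum admits a cellular filtration with exactly $\abs{L_0 \cap L_1}$ free $\frakR$-cells, and its $\frakR$-cohomology recovers $HF^*(L_0, L_1; \frakR)$.

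Second, I would upgrade PSS to a ring-spectrum equivalence. Using moduli of pearls (half-infinite Morse flow lines on $L_0$ capped with pseudo-holomorphic disks with Lagrangian boundary) and checking compatibility with pair-of-pants moduli, I would realize
\begin{equation}
\Sigma^\infty_+ L_0 \wedge \frakR \;\simeq\; \frakF(L_0, L_0)
\end{equation}
as an equivalence of $\frakR$-ring spectra. Continuation invariance along the Hamiltonian isotopy then gives $\frakF(L_0, L_0) \simeq \frakF(L_0, L_1)$, and passing to $\frakR$-cohomology induces a ring isomorphism $\frakR^*(L_0) \cong HF^*(L_0, L_1; \frakR)$ intertwining cup product and Floer product. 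Third, I would apply the ring-spectrum analogue of the classical ``cup length is bounded by the number of cells'' inequality: if $X$ is an $\frakR$-module ring spectrum admitting a cellular filtration with $n$ free $\frakR$-cells, then $c_\frakR(X) \leq n$. This mirrors the classical filtration argument: a $k$-fold cup product of reduced classes factors through the $k$-fold smash of reduced cofibers of the cellular filtration, which is null once $k$ exceeds the number of cells. Combining this with Steps 1 and 2 yields $c_\frakR(L_0) \leq \abs{L_0 \cap L_1}$.

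The main obstacle will be Step 2: realizing PSS as an equivalence of ring spectra rather than merely an isomorphism of cohomology rings. This requires coherently assembling the Floer, pearl, and pair-of-pants moduli spaces together with their $\frakR$-orientations into a compatible system of framed flow bimodules respecting the relevant $A_\infty$ or $E_\infty$ structure, and is the principal technical content of the theorem. Steps 1 and 3 are comparatively formal once the requisite Floer-homotopical infrastructure is in place.
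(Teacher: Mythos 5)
This is a cited result --- the paper invokes it as Theorem 1.9 of \cite{HP22} without supplying a proof --- so I am evaluating your sketch against what such a proof needs rather than against text in the present article. Your outline has the right ingredients (an $\frakR$-linear Floer spectrum, a PSS comparison, a filtration with one cell per intersection point), and you are right that a spectral enhancement of PSS is a serious piece of work. But the load-bearing step is the one you dismiss as ``comparatively formal,'' and as stated it is not a formal fact.

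Your Step 3 asserts that an $\frakR$-module ring spectrum built from $n$ free $\frakR$-cells has $\frakR$-cup-length at most $n$, because a $k$-fold product of reduced classes ``factors through the $k$-fold smash of reduced cofibers of the cellular filtration.'' That factorization requires the multiplication $X \wedge_\frakR X \to X$ to be \emph{compatible with the filtration} in the sense $F_i \wedge_\frakR F_j \to F_{i+j}$; an arbitrary cellular filtration of a ring spectrum carries no such compatibility, and without it the purported factorization of the $k$-fold product simply does not exist. In the Lagrangian setting the filtration that does work is the \emph{action filtration}, and its compatibility with the quantum module structure is a geometric theorem, not a formality: it rests on Gromov compactness and the positivity of energy for pseudo-holomorphic strips and pearls, exactly the inputs behind Hofer's and Floer's original cup-length estimate (Theorem \ref{thm:cuplength}) and behind the quantum cap-length argument in Appendix \ref{appendix:caplength}. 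Your write-up never mentions the action functional, which is the actual mechanism converting ``$N$ intersection points'' into an $N$-step \emph{multiplicative} filtration; this is the gap.

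Two smaller remarks. First, you do not need PSS to be an equivalence of $\frakR$-\emph{ring} spectra; a module structure of the Floer spectrum over $\Sigma^\infty_+ L_0 \wedge \frakR$, together with a PSS-type identification of $\frakR$-modules, already suffices to run the cap-length argument, and this is considerably less structure to construct coherently than a full $E_1$- or $E_\infty$-map. Second, even granting the multiplicative filtration, the degree bookkeeping in the ``cells bound cup-length'' inequality is delicate for a general $\frakR$ (reduced classes in $\tilde{\frakR}^*$ can live in degree $\leq 0$, so the naive argument that $\alpha_1, \alpha_1\alpha_2, \ldots$ are linearly independent by degree fails), which is another reason the action filtration, rather than a bare cell count, must carry the argument.
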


\begin{rem}
Hirschi-Porcelli's theorem recovers Hofer-Floer's theorem by using the Eilenberg-Maclane spectrum $H\bbF_2$ since any real virtual bundle is $H\bbF_2$-orientable. \cite[Proposition 1.19]{Por22} gives conditions for when we may use other ring spectra.
\end{rem}

\subsection{Quantum cap-length}
We will digress a bit to present a generalization of Theorem \ref{thm:cuplength} to the case that $(L_0,L_1)$ is not Hamiltonian isotopic via a compactly supported Hamiltonian isotopy; this uses the quantum cap product on Lagrangian Floer homology.

\begin{assu}\label{assu:caplength}
During this digression, we will consider either of the two following collections of assumptions. The first collection is the following.
\begin{enumerate}
\item $(M,\omega)$ is closed.
\item $L_i$ is closed and relatively exact.
\item The integral of $\omega$ over any annulus with one boundary component on $L_0$ and one boundary component on $L_1$ is zero.
\end{enumerate}
The second collection is the following.
\begin{enumerate}
\item $(M,\omega)$ is a Liouville manifold.
\item $L_i$ is exact and either closed or cylindrical at infinity.
\end{enumerate}
\end{assu}

Using Assumption \ref{assu:caplength}, we may define the $\bbZ/2$-Lagrangian Floer homology of $(L_0,L_1)$, denoted $HF(L_0,L_1;\bbZ/2)$. The quantum cap product is a way of giving $HF(L_0,L_1;\bbZ/2)$ the structure of a module over the cohomology ring of each $L_i$. This is done by (1) deforming $(L_0,L_1)$ via a compactly supported Hamiltonian isotopy to intersect transversely, (2) choosing a regular admissible $\omega$-compatible almost complex structure $J$ for $(L_0,L_1)$, and (3) defining a map
    \begin{equation}
    *:HF(L_0,L_1;\bbZ/2)\otimes_{\bbZ/2} H^*(L_i;\bbZ/2)\to HF(L_0,L_1;\bbZ/2)
    \end{equation}
induced by counting $J$-holomorphic strips connecting intersection points of $(L_0,L_1)$ with Lagrangian boundary conditions and a marked point constraint on $L_i$.

\begin{thm}\label{thm:caplength}
Suppose Assumption \ref{assu:caplength} and that there exists 
    \begin{equation}
    \alpha_1,\ldots,\alpha_k\in H^*(L_i;\bbZ/2),\;\;\deg\alpha_j\geq1
    \end{equation}
and $\beta\in HF(L_0,L_1;\bbZ/2)$ such that
    \begin{equation}
    \beta*\alpha_1*\cdots*\alpha_k\neq0, 
    \end{equation}
then any compactly supported Hamiltonian deformation of $(L_0,L_1)$ which intersects in a discrete set must do so in at least $k+1$ points of pairwise distinct action.
\end{thm}

\begin{rem}
If $(L_0,L_1)$ actually satisfies Assumption \ref{assu:overview}, then we may use the Lagrangian PSS isomorphism to identify $HF(L_0,L_1;\bbZ/2)$ and $H_*(L_i;\bbZ/2)$. The upshot is that the quantum cap-length estimate will reduce to Hofer-Floer's cup-length estimate.
\end{rem}

\subsection{Brief overview of Floer homotopy}\label{subsec:introfloer}
There are some limitations of the previously stated results. For example, with the exception of the quantum cap-length result, we require the Lagrangian pair to be Hamiltonian isotopic via a compactly supported Hamiltonian isotopy. Another shortcoming is that we would like to use more of the operations on (co)homology to say things about Lagrangian intersections, e.g., Steenrod squares. However, Lagrangian Floer cohomology \emph{a priori} only admits Steenrod squares when there exists a Floer homotopy type underlying it; this leads us to consider Floer homotopy theory. 

Cohen-Jones-Segal introduced Floer homotopy theory in \cite{CJS95} as an idea that associates a stable homotopy type to the geometric data arising from Floer theory. The authors illustrated their ideas by constructing the Morse homotopy type, i.e., the stable homotopy type associated to Morse theory; moreover, they proved the Morse homotopy type recovers the stable homotopy type of the underlying closed smooth manifold. Since then, Floer homotopy theory in the Cohen-Jones-Segal framework has seen considerable development, cf. \cite{Bon24,Bon25,CK23,Lar21,LS14,Rez24}. Meanwhile, the foundations of Floer homotopy theory are currently being rewritten in ongoing work of Abouzaid-Blumberg \cite{AB24}, and applications of this framework are already beginning to appear, cf. \cite{BB25,PS24,PS25a,PS25b,PS25c}; this is the framework we will use in the present article.\footnote{Note, the article \cite{BB25}, whose main result (using parameterized Lagrangian Floer homotopy instead of Steenrod squares) will recover the improved lower bounded presented in Subsection \ref{subsec:application}, was written after the present article first appeared as a preprint.} 

\begin{assu}\label{assu:main}
In the present article, we will consider the following collection of assumptions.
\begin{enumerate}
\item $(M,\omega)$ is a Liouville manifold.
\item $L_i$ is exact and either closed or cylindrical at infinity.
\item $(L_0,L_1)$ has a framed brane structure $\Lambda$.
\end{enumerate}
\end{assu}

Recall, a \emph{framed brane structure} for $(L_0,L_1)$ is: 
\begin{enumerate}
\item a choice of map $\Lambda:M\to BO$ that factors a classifying map for the stable tangent bundle of $M$ as a complex vector bundle:
    \begin{equation}
    \begin{tikzcd}[column sep=10ex]
    M\arrow[rr,bend right]\arrow[r,"\Lambda"] & BO\arrow[r,"(\cdot)\otimes_\bbR\underline{\bbC}"] & BU
    \end{tikzcd}
    ,
    \end{equation}
where $\underline{\bbC}$ denotes the trivial complex vector bundle $M\times\bbC\to M$;
\item and a choice of homotopy between $\Lambda\vert_{L_i}:L_i\to BO$ and a classifying map for the stable tangent bundle of $L_i$ as a real vector bundle.
\end{enumerate}
More explicitly, a framed brane structure for $(L_0,L_1)$ is a choice of real vector bundle $\Lambda\to M$ with a choice of isomorphism $TM\oplus\underline{\bbC}^d\cong\Lambda\otimes_\bbR\underline{\bbC}$, for some $d\in\bbZ_{\geq0}$, and a choice of homotopy between $\Lambda\vert_{L_i}$ and $TL_i\oplus\underline{\bbR}^d$ through totally real subbundles of $TM\oplus\underline{\bbC}^d$. (Of course, such choices are not guaranteed to exist, cf. \cite{AGLW25}.)

\begin{rem}\label{rem:grading}
A choice of framed brane structure for $(L_0,L_1)$ induces graded structures on $L_0$ and $L_1$, cf. \cite[Example 2.10]{Sei00}. 
\end{rem}

The following result is essentially due to Cohen-Jones-Segal; the details were carried out by Large.

\begin{thm}[\cite{CJS95},\cite{Lar21}]
Suppose Assumption \ref{assu:main}, then the Floer homotopy type of $(L_0,L_1)$ exists.
\end{thm}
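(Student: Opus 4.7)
The strategy is to produce a framed flow category from the Floer data of $(L_0,L_1)$ and then feed it into the Cohen-Jones-Segal realization procedure, with the analytic foundations as carried out by Large.

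First, I would perturb $(L_0,L_1)$ by a compactly supported Hamiltonian isotopy and choose a generic $\omega$-compatible almost complex structure $J$, cylindrical at infinity, so that the Lagrangians intersect transversely and every relevant moduli space of Floer strips is cut out transversely. The Liouville setting, combined with the exactness/cylindrical hypotheses on the $L_i$, provides the energy estimates and $C^0$-compactness needed to control bubbling and escape to infinity, so that the compactified moduli spaces $\overline{\calM}(p,q)$ of finite-energy $J$-holomorphic strips between intersection points $p,q$ are compact manifolds with corners whose boundary strata are described by broken strips. These spaces, together with the concatenation maps on broken strips, assemble into a topological flow category $\frakM(L_0,L_1)$ whose objects are the intersection points.

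Second, I would upgrade $\frakM(L_0,L_1)$ to a \emph{framed} flow category using the brane structure $\Lambda$. The factorization of the stable tangent bundle of $M$ through $\Lambda:M\to BO$, together with the choice of homotopies $\Lambda\vert_{L_i}\simeq TL_i\oplus\underline{\bbR}^d$ through totally real subbundles, supplies exactly the data needed to stably trivialize the index bundle of the linearized Cauchy-Riemann operator over each $\overline{\calM}(p,q)$: after choosing, at each intersection point, a graded lift in the sense of Remark \ref{rem:grading}, the Lagrangian boundary data along a strip acquires a path of totally real planes inside $\Lambda\otimes_\bbR\underline{\bbC}$ which trivializes the determinant line, and hence (via the standard Cauchy-Riemann deformation theorem) the virtual index bundle. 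The induced stable framings are compatible with the corner structure because broken-strip gluing corresponds under this identification to concatenation of paths of totally real subspaces.

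Third, I would apply the Cohen-Jones-Segal realization, made rigorous by Large \cite{Lar21}, which inputs a framed flow category and outputs a spectrum by inductively building a CW-spectrum whose attaching maps are the Pontryagin-Thom collapses of the framed moduli spaces. The resulting spectrum is the Floer homotopy type of $(L_0,L_1)$. Independence of the choices of Hamiltonian perturbation, of $J$, and of the representative of $\Lambda$ within its homotopy class is established by the continuation argument, lifted to framed flow bimodules and realized as weak equivalences of spectra.

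The main obstacle is step two: producing the coherent system of stable framings on the moduli spaces with corners, and verifying that the combinatorics of gluing broken strips is faithfully reflected by composition of the induced framings. This is precisely the content the notion of a framed brane structure is engineered to supply, and verifying it reduces the problem to the analytic and homotopical scaffolding already developed in \cite{CJS95} and \cite{Lar21}.
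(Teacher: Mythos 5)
Your proposal follows essentially the same route as the paper: build the unstructured flow category from compactified Floer moduli spaces, use the framed brane structure $\Lambda$ to stably trivialize the index bundles compatibly with gluing, and realize the resulting framed flow category as a spectrum. The one genuine difference is in the realization step: you invoke the original Cohen-Jones-Segal/Large procedure (CW-spectrum built by Pontryagin--Thom collapses), whereas the paper defines $\frakF^{H,J,\Lambda}_{L_0,L_1}$ as the mapping spectrum $\flow^\fr(\unit,\bbF^{H,J,\Lambda}_{L_0,L_1})$ in the Abouzaid--Blumberg stable $\infty$-category of framed flow categories. The paper explicitly remarks (in a footnote to this theorem) that the two are reinterpretations of one another, so this is not a gap but a choice of foundations; the Abouzaid--Blumberg framework is what the rest of the paper uses (e.g.\ for the Morse comparison in Proposition \ref{prop:morsehomotopytype} and the cofiber sequence in Theorem \ref{thm:mainresult1}), so adopting it here gives the later arguments a uniform home. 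One more minor point: where you describe trivializing ``the determinant line'' via paths of totally real subspaces, the paper's Section 5 implements this more concretely through Floer abstract caps, i.e.\ operators $T_{\frakF,x}$ with prescribed asymptotics; gluing $T^*_{\frakF,x}\#\bigl(D(\overline{\partial}_{H,J})_u\oplus\Phi\bigr)\#T_{\frakF,y}$ and deforming (using $\Lambda$) to an invertible model produces the equivalence of virtual bundles, and compatibility with broken-trajectory gluing falls out of operator gluing plus $\ind T^*_{\frakF,x}+\ind T_{\frakF,x}\cong0$. Your sketch captures the same content at the level of ideas, and the abstract-cap formulation is what makes the coherence across strata you flag as the ``main obstacle'' tractable.
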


Briefly, the Floer homotopy type of $(L_0,L_1)$ is constructed as follows, cf. Section \ref{sec:floerhomotopytheory}.\footnote{Technically, this is not how Large originally constructed the Floer homotopy type -- they had used the Cohen-Jones-Segal framework. However, we may reinterpret their construction in the Abouzaid-Blumberg framework.} First, we construct an unstructured flow category $\bbF^{H,J}_{L_0,L_1}$ associated to a choice of regular admissible Floer data $(H,J)$. Second, we use $\Lambda$ to lift $\bbF^{H,J}_{L_0,L_1}$ to a framed flow category $\bbF^{H,J,\Lambda}_{L_0,L_1}$. Finally, the Floer homotopy type is defined as the mapping spectrum
    \begin{equation}
    \frakF^{H,J,\Lambda}_{L_0,L_1}\equiv\flow^\fr\Big(\unit,\bbF^{H,J,\Lambda}_{L_0,L_1}\Big),
    \end{equation}
where $\flow^\fr$ is the stable $\infty$-category of framed flow categories and $\unit$ is the unit flow category with the trivial rank 0 framing. Note, our convention is such that 
    \begin{align}
    H_*\Big(\frakF^{H,J,\Lambda}_{L_0,L_1};R\Big)&=HF_*(L_0,L_1;R), \\
    H^*\Big(\frakF^{H,J,\Lambda}_{L_0,L_1};R\Big)&=HF^*(L_0,L_1;R),
    \end{align}
where $R$ is any (discrete) commutative ring.

\begin{rem}
It is known $\frakF^{H,J,\Lambda}_{L_0,L_1}$ will be invariant under compactly supported Hamiltonian deformations of $(L_0,L_1)$ and the choice of admissible regular Floer data $(H,J)$. However, $\frakF^{H,J,\Lambda}_{L_0,L_1}$ will depend on the choice of $\Lambda$ (in particular, cf. \cite{BB25}). Hence, saying ``the'' Floer homotopy type is not precise, however, it is understood to be defined up to some homotopical choice (e.g., $\Lambda$).
\end{rem}

\begin{rem}
We could define the Floer homotopy type using the non-Hamiltonian perturbed framework for Lagrangian Floer theory, and the two definitions are equivalent. When we use the non-Hamiltonian perturbed framework, we will drop the Hamiltonian from the notation.
\end{rem}

\subsection{Main results}
The purpose of the present article is to investigate Question \ref{ques:overview}. Our main result is the following.

\begin{thm}\label{thm:main1}
Suppose Assumption \ref{assu:main} and that $L_0$ intersects $L_1$ in finitely many (possibly degenerate) points, denoted $\{x_{s_j}\}_{1\leq j\leq k,1\leq s_j\leq\ell_j}$, such that the points of fixed index $j$ (i.e., $\{x_{s_j}\}_{1\leq s_j\leq\ell_j}$) are of a fixed action $\kappa_j\in\bbR$ which is strictly increasing (i.e., $\kappa_j<\kappa_{j+1}$ $\forall j$). For each $j$, we have that there exists integers $\{d_{s_j}\}_{1\leq s_j\leq\ell_j}\subset\bbZ$ and a cofiber sequence 
    \begin{equation}\label{eq:cofiber}
    \frakF^{J,\Lambda,\kappa_{j-1}}_{L_0,L_1}\to\frakF^{J,\Lambda,\kappa_j}_{L_0,L_1}\to\bigvee_{s_j}\Sigma^{d_{s_j}}\Sigma^\infty\calC_{f_{s_j}},
    \end{equation}
where each $f_{s_j}$ is a smooth function on $\bbR^n$ with a single (possibly degenerate) critical point at the origin.
\end{thm}

Here, we use the fact that $\frakF^{J,\Lambda}_{L_0,L_1}$ is a filtered spectrum: 
    \begin{equation}
    \frakF^{J,\Lambda,\kappa_0}_{L_0,L_1}\to\frakF^{J,\Lambda,\kappa_1}_{L_0,L_1}\to\cdots\to\frakF^{J,\Lambda,\kappa_k}_{L_0,L_1}\equiv\frakF^{J,\Lambda}_{L_0,L_1},
    \end{equation}
where $\frakF^{J,\Lambda,\kappa_j}_{L_0,L_1}$ is the Floer homotopy type of $(L_0,L_1)$ computed with respect to intersection points of action at most $\kappa_j$. (Our convention for action is that Floer trajectories decrease the action. Also, by convention, $\frakF^{J,\Lambda,\kappa_0}_{L_0,L_1}\equiv*$.) Moreover, $\calC_{f_{s_j}}$ is the maximal Conley index of $f_{s_j}$. Essentially, $\calC_{f_{s_j}}$ is the (pointed) space whose stable homotopy type is computed by the Morse homotopy type of $f_{s_j}$; in particular, we prove an extension of the Cohen-Jones-Segal theorem, which identifies the Morse homotopy type of a closed smooth manifold with the suspension spectrum of that manifold, to Conley index theory, cf. Proposition \ref{prop:morsehomotopytype}.

The following result is immediate from Theorem \ref{thm:main1}.

\begin{cor}\label{cor:main1}
Let $R$ be a (discrete) commutative ring. For each $j$, we have a long exact sequence in cohomology: 
    \begin{equation}\label{eq:les}
    \cdots\to\bigoplus_{s_j}\widetilde{H}^{*-d_{s_j}}(\calC_{f_{s_j}};R)\to HF^*_{\kappa_j}(L_0,L_1;R)\to HF^*_{\kappa_{j-1}}(L_0,L_1;R)\to\cdots,
    \end{equation}
where 
    \begin{equation}
    HF^*_{\kappa_j}(L_0,L_1;R)\equiv H^*\Big(\frakF^{J,\Lambda,\kappa_j}_{L_0,L_1};R\Big).
    \end{equation}
We have the analogous long exact sequence in homology. Moreover, when $R=\bbZ/2$, \eqref{eq:les} is compatible with Steenrod squares. 
\end{cor}

The main motivation behind proving Theorem \ref{thm:main1} is the following result.

\begin{thm}\label{thm:main2}
Recall, $2n=\dim M$. Suppose Assumption \ref{assu:main} and that there exists integers 
    \begin{equation}
    \{s_j\}_{1\leq s\leq\ell, 1\leq j\leq k(s)}\subset\bbZ,\;\;s_j\geq(n-1)/2
    \end{equation}
together with 
    \begin{equation}
    \alpha_1,\ldots,\alpha_\ell\in HF^*(L_0,L_1;\bbZ/2)
    \end{equation}
satisfying
    \begin{align}
    \forall s,\;\;\sq^{s_{k(s)}}\cdots\sq^{s_1}(\alpha_s)&\neq0, \\
    \forall s'<s,\;\;\deg\alpha_s-\deg\Big(\sq^{s'_{k(s')}}\cdots\sq^{s'_1}(\alpha_{s'})\Big)&\geq n-1,
    \end{align}
then any compactly supported Hamiltonian deformation of $(L_0,L_1)$ which intersects in a discrete set must do so in at least $\ell+\sum_s k(s)$ points of pairwise distinct action.
\end{thm}

\begin{rem}
In special cases, it could happen that we may use Steenrod squares of lower degree in the hypotheses for Theorem \ref{thm:main2}. For example, when $\dim M=14$, the above result requires Steenrod squares of degree at least 3. But, in this case, we may allow Steenrod squares of degree 2 as well, cf. Corollary \ref{cor:dim14}.
\end{rem}

The proof of Theorem \ref{thm:main1} can be briefly summarized as follows. Around every $x_{s_j}$, $L_1$ may be written locally as a graph $\big\{(p,d(f_{s_j})_p)\big\}\subset T^*\bbR^n$. In particular, we may deform $(L_0,L_1)$ into a transversely intersecting pair of Lagrangians by perturbing each $f_{s_j}$ to be Morse. Thus, if we look at the Floer homotopy type at a fixed action level $\kappa_j$ (i.e., only using the intersection points arising from the critical points of the Morse perturbations of $\{f_{s_j}\}_{1\leq s_j\leq\ell_j}$), the only Floer trajectories will be the Morse trajectories of these Morse perturbations; this is essentially the content of \eqref{eq:cofiber}. Theorem \ref{thm:main2} follows by (1) investigating Steenrod squares on the maximal Conley index of a smooth function on $\bbR^n$ with a single (possibly degenerate) critical point (cf. Subsection \ref{subsec:steenrodsquares}) and (2) using \eqref{eq:les}.

\subsection*{Acknowledgments}
The author would like to thank his advisor Paul Seidel for suggesting and mentoring this project. The author would also like to thank Mohammed Abouzaid, Andrew Blumberg, and Hiro Lee Tanaka for helpful comments. Finally, the author would like to thank the anonymous referee for providing feedback which helped improve the present article in various ways. This work was partially supported by an NSF Graduate Research Fellowship award and an MIT Mathworks Science Fellowship award. This work was partly completed while a visitor in the Nonlinear Algebra group at MPI MIS in Leipzig, and the author would like to thank Bernd Sturmfels and the Institute for their hospitality.

\section{Morse theory: homology}\label{sec:morsehomology}
\subsection{Classical approach}
In order to set notation, we will quickly review the construction of Morse homology. In this section, let $X$ be a closed smooth $n$-manifold, $f\in C^\infty(X)$ a smooth function, and $g$ a Riemannian metric on $X$.

Consider the subset $\crit(f)\subset X$ of critical points of $f$; we say $x\in\crit(f)$ is non-degenerate if the Hessian of $f$ at $x$ is non-degenerate. We will assume the (generic) condition that $f$ is Morse, i.e., all of its critical points are non-degenerate. We consider the (negative) gradient flow of $f$:
    \begin{equation}
    \partial_s\gamma(s)=-\grad f\big(\gamma(s)\big).
    \end{equation}
    
\begin{warning}
In particular, whenever we write ``gradient flow'' in the present article, we really mean the ``negative gradient flow''.
\end{warning}

It is well-known that any gradient flow is defined for all time and limits to critical points of $f$. Let $W^u(x)$ resp. $W^s(x)$ be the unstable resp. stable manifold of $x$:
    \begin{equation}
    W^u(x)\equiv\Big\{p\in X:\lim_{s\to-\infty}\gamma_p(s)=x\Big\}\;\;\textrm{resp.}\;\;W^s(x)\equiv\Big\{p\in X:\lim_{s\to+\infty}\gamma_p(s)=x\Big\},
    \end{equation}
where $\gamma_p$ is the unique gradient flow of $f$ satisfying $\gamma_p(0)=p$. Recall, $W^u(x)$ resp. $W^s(x)$ is a locally closed submanifold of $X$ that is diffeomorphic to the unit disk of dimension $I(x)$ resp. $n-I(x)$, where $I(\cdot)$ denotes the Morse index.

We will assume the (generic) condition that $(f,g)$ is Morse-Smale, i.e., all unstable manifolds intersect all stable manifolds transversely. Therefore, the space of gradient flows connecting $x$ to $y$,
    \begin{equation}
    \widehat{\calM}_X^{f,g}(x,y)\equiv W^u(x)\cap W^s(y),
    \end{equation}
is a smooth $\big(I(x)-I(y)\big)$-manifold. There is a free proper $\bbR$-action on $\widehat{\calM}_X^{f,g}(x,y)$, when $x\neq y$, given by time-shift in the $s$-coordinate. The corresponding $\bbR$-quotient $\calM_X^{f,g}(x,y)$ has a compactification $\bbM_X^{f,g}(x,y)$ given by allowing breakings at critical points. The Morse chain complex (with $\bbZ/2$-coefficients), denoted $CM_*(X;f,g;\bbZ/2)$, is generated in degree $m$ by Morse index $m$ critical points and has differential
    \begin{equation}
    \partial x\equiv\sum_{I(y)=I(x)-1}\big\lvert\bbM_X^{f,g}(x,y)\big\rvert_{\bbZ/2}\cdot y, 
    \end{equation}
where $\big\lvert\bbM_X^{f,g}(x,y)\big\rvert_{\bbZ/2}$ is the cardinality of $\bbM_X^{f,g}(x,y)$ modulo 2.\footnote{Of course, we may define Morse homology with $\bbZ$-coefficients by using a system of coherent orientations, cf. \cite{Sch93}.} The Morse homology, denoted 
    \begin{equation}
    HM_*(X;f,g;\bbZ/2)\equiv H_*\big(CM_*(X;f,g;\bbZ/2),\partial\big),
    \end{equation}
is isomorphic to $H_*(X;\bbZ/2)$.

\subsection{Functional-analytic approach}\label{subsec:morsehomologyfunctional}
It will be useful in the present article to recast Morse theory in a more functional-analytic framework analogous to the framework of Floer theory, cf. \cite{Sch93}.

Let $x,y\in\crit(f)$ and consider the following Banach manifold:
    \begin{equation}
    \scrP^{1,2}_\frakM(x,y)\equiv\Big\{\gamma\in W^{1,2}_\loc(\bbR;X):\lim_{s\to-\infty}\gamma(s)=x,\lim_{s\to+\infty}\gamma(s)=y\Big\}.
    \end{equation}
There is a Banach bundle $\scrE_\frakM\to\scrP^{1,2}_\frakM(x,y)$, with fiber $\scrE_\frakM\vert_\gamma\equiv L^2(\bbR;\gamma^*TX)$, together with a section
    \begin{align}
    \Xi_{f,g}:\scrP^{1,2}_\frakM(x,y)&\to\scrE_\frakM \\
    \gamma(s)&\mapsto\partial_s\gamma(s)+\grad f\big(\gamma(s)\big). \nonumber 
    \end{align}
Elliptic regularity shows $(\Xi_{f,g})^{-1}(0)$ consists of gradient flows connecting $x$ to $y$. Let $\nabla$ be a connection on $TX$ (which we usually take to be the Levi-Civita connection $\nabla^g$), then the linearization of $\Xi_{f,g}$ is given by an operator 
    \begin{equation}
    D(\Xi_{f,g})_\gamma:T_\gamma\scrP^{1,2}_\frakM(x,y)\to\scrE_\frakM\vert_\gamma,
    \end{equation}
where 
    \begin{equation}
    T_\gamma\scrP^{1,2}_\frakM(x,y)=W^{1,2}(\bbR;\gamma^*TX).
    \end{equation}
Note, the linearization at a gradient flow is independent of $\nabla$. When $(f,g)$ is Morse-Smale, we see $D(\Xi_{f,g})_\gamma$, for any two $x,y\in\crit(f)$ and any $\gamma\in(\Xi_{f,g})^{-1}(0)$, is a surjective Fredholm operator of index $I(x)-I(y)$. Hence, $(\Xi_{f,g})^{-1}(0)$ is transversely cut out of dimension $I(x)-I(y)$. As in the classical case, there is a free proper $\bbR$-action on $(\Xi_{f,g})^{-1}(0)$, when $x\neq y$, given by time-shift in the $s$-coordinate; moreover, the corresponding $\bbR$-quotient has a Morse-theoretic compactification given by allowing breakings at critical points. Therefore, we may define Morse homology using this functional-analytic viewpoint. 

The connection between this functional-analytic framework and the classical framework is the diffeomorphism 
    \begin{align}
    (\Xi_{f,g})^{-1}(0)&\xrightarrow{\sim}\widehat{\calM}^{f,g}_X(x,y) \\
    \gamma&\mapsto\gamma(0); \nonumber
    \end{align}
this descends to the $\bbR$-quotients and extends over the compactifications. In particular, the functional-analytic framework is equivalent to the classical framework, and we will elide the distinction between the two.

\section{Conley index theory}\label{sec:conleyindextheory}
\subsection{Basics}
In the present article, we are only concerned with gradient flows on smooth manifolds. However, let us introduce Conley index theory a bit more generally, cf. \cite{Con78,Pea94,Sal85}. In this section let $X$ be a (not necessarily closed) smooth $n$-manifold. 

A \emph{complete (smooth) flow} on $X$ is a global (smooth) $\bbR$-action on $X$:
    \begin{align}
    \phi:\bbR\times X&\to X \\
    (s,x)&\mapsto\phi_s(x).\nonumber
    \end{align}
For any $x\in X$, we denote by $\phi^x:\bbR\to X$ the map $s\mapsto\phi_s(x)$ satisfying $\phi^x(0)=x$. A subset $S\subset X$ is \emph{invariant} for $\phi$ if
    \begin{equation}
    \forall x\in S,\;\;\phi^x(s)\in S\;\;\forall s\in\bbR.
    \end{equation}
If $S\subset X$ is compact, then $S$ contains a \emph{maximal invariant subset}:
    \begin{equation}
    I(S)\equiv\{x\in S:\textrm{$\phi^x(s)\in S$ $\forall s\in\bbR$}\}. 
    \end{equation}
A compact invariant subset $S$ is \emph{isolated} if there exists a compact neighborhood $N\subset X$ of $S$ such that $I(N)=S$; we call $N$ an \emph{isolating neighborhood}. Throughout this section, $S$ will always denote an isolated invariant subset.

\begin{defin}
An \emph{index pair} of $S$ is a pair $(N,N_-)$, $N_-\subset N$, of compact subsets of $X$ such that:
\begin{enumerate}
\item $\overline{N-N_-}$ is an isolating neighborhood of $S$;
\item $N_-$ is \emph{positively invariant}, i.e., if $x\in N_-$ and there exists a positive time $s_0\in\bbR$ such that $\phi^x(s_0)\in N$, then $\phi^x\big([0,s_0]\big)\subset N_-$;
\item and $N_-$ is an \emph{exit set}, i.e., if $x\in N$ and there exists a positive time $s_0\in\bbR$ such that $\phi^x(s_0)\notin N$, then there exists a time $s_0'\in[0,s_0)$ such that $\phi^x(s_0')\in N_-$. 
\end{enumerate}
\end{defin}

\begin{thm}[\cite{Con78}]
An index pair $(N,N_-)$ of an isolated invariant subset $S$ always exists. Moreover, the pointed homotopy type $\calC(S)$ of $N/N_-$ is independent of the choice of index pair. We refer to $\calC(S)$ as the \emph{Conley index} of $S$.
\end{thm}

When perturbing a flow, it is possible that the topology of an isolated invariant subset changes. However, under a small perturbation, we may insure that the Conley index of an isolated invariant subset remains unchanged.

\begin{thm}[\cite{Con78}]\label{thm:continuation}
Let $\Phi:\bbR\times X\times[0,1]\to X$ be a smooth family of complete flows on $X$, i.e., for all $\lambda\in[0,1]$, $\Phi(\cdot,\cdot,\lambda)$ is a complete flow on $X$. Suppose $S^\lambda$ is an isolated invariant subset of $\Phi(\cdot,\cdot,\lambda)$ and there exists a single isolating neighborhood $N\subset X$ of each $S^\lambda$, then 
    \begin{equation}
    \calC\big(S^\lambda\big)\simeq\calC\big(S^{\lambda'}\big),\;\;\forall\lambda,\lambda'.
    \end{equation}
\end{thm}

Since the Conley index is independent of the choice of index pair, we wish to know that we may choose nice index pairs for applications. An \emph{index triple} of $S$ is a triple $(N,N_-,N_+)$ such that: $(N,N_-)$ is an index pair of $S$ in the forward flow, $(N,N_+)$ is an index pair of $S$ in the reverse flow, $N$ is a smooth $n$-manifold with non-empty boundary, and $\partial N$ decomposes as $N_-\cup N_+$, where $N_-$ and $N_+$ are smooth $(n-1)$-manifolds with common (possibly empty) boundary. A proof of the following theorem can be extracted from the explicit construction of the Conley index in the proof of \cite[Theorem 5.4]{RS88}.

\begin{thm}[\cite{RS88}]\label{thm:rs}
An index triple of $S$ exists.
\end{thm}

The proof of the following lemma is straightforward.

\begin{lem}\label{lem:finitecwcomplex}
$\calC(S)$ is a finite CW-complex of dimension at most $n$.
\end{lem}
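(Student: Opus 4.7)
The plan is to represent $\calC(S)$ by an explicit smooth-manifold model and then invoke classical triangulation theory.

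First, I would invoke Theorem \ref{thm:rs} to fix an index triple $(N,N_-,N_+)$ of $S$. Since the Conley index is independent of the choice of index pair, $\calC(S)$ is represented by the pointed quotient $N/N_-$. Note that $N$ is automatically compact because it is an isolating neighborhood of the compact invariant set $S$ (and index pairs/triples are taken to consist of compact sets by definition).

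Next, by Theorem \ref{thm:rs}, $N$ is a compact smooth $n$-manifold with boundary and $N_-$ is a compact smooth $(n-1)$-submanifold (possibly with boundary) sitting inside $\partial N$. The key input I would use is the classical Whitehead smooth triangulation theorem: every compact smooth manifold with boundary admits a finite simplicial triangulation, and a compact embedded submanifold can be arranged to be a subcomplex. Applied to the pair $(N,N_-)$, this realises it as a finite CW-pair of dimension at most $n$.

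Finally, collapsing a subcomplex of a finite CW-complex produces another finite CW-complex of the same or smaller dimension, with the subcomplex contributing a single basepoint $0$-cell and all other cells inherited from the ambient complex. Applying this to $(N,N_-)$ yields that $\calC(S)\simeq N/N_-$ is a finite CW-complex of dimension at most $n$. The only nontrivial ingredient in this plan is the existence of a compatible triangulation of the smooth pair $(N,N_-)$, which is standard; everything else is formal.
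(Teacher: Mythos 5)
Your proposal is correct and matches the paper's proof almost verbatim: both fix an index triple via Theorem \ref{thm:rs}, triangulate the smooth pair $(N,N_-)$ so that $N_-$ is a subcomplex, and observe that the quotient $N/N_-$ is a finite CW-complex of dimension at most $n$. The only difference is that you spell out the appeal to Whitehead's triangulation theorem and the collapsing step a bit more explicitly.
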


\begin{proof}
Choose an index triple $(N,N_-,N_+)$ of $S$. The manifold/submanifold pair $(N,N_-)$ is smooth, hence we may triangulate $N$ such that $N_-$ is a subcomplex. In particular, $\calC(S)\simeq N/N_-$ is naturally a CW-complex. Moreover, since $N$ and $N_-$ are finite, so is $\calC(S)$. Finally, the dimension statement follows since $\calC(S)$ is a quotient of an $n$-dimensional CW-complex by an $(n-1)$-dimensional CW-subcomplex. 
\end{proof}

\subsection{Gradient flow}
Let $f\in C^\infty(X)$ be a smooth function and $g$ a Riemannian metric on $X$. As in the closed case, we consider the (negative) gradient flow of $f$:
    \begin{equation}
    \partial_s\gamma(s)=-\grad f\big(\gamma(s)\big).
    \end{equation}
However, in contrast to the closed case, since $X$ is not assumed to be compact, the gradient flow is not necessarily a complete flow. Also, in contrast to the closed case, it is not necessarily true that the gradient flows of $f$ connect critical points -- they may escape in (in)finite time. We will say a gradient flow $\gamma$ is a \emph{connecting trajectory} (or \emph{gradient flow} connecting $x$ to $y$) if $\gamma$ is defined for all time and there exists two $x,y\in\crit(f)$ such that 
    \begin{equation}
    \lim_{s\to-\infty}\gamma(s)=x\;\;\textrm{and}\;\;\lim_{s\to+\infty}\gamma(s)=y.
    \end{equation} 
In the present article, we only care about the dynamics of the connecting trajectories; hence, we may as well assume the gradient flow is defined for all time. 

We denote by $S_f\subset X$ the subset of all critical points of $f$ together with all points on all connecting trajectories; this subset is invariant by construction and we refer to it as the \emph{maximal invariant subset} of $f$. We will always assume $S_f$ is isolated (and, in particular, compact).\footnote{The following results may be stated more generally for any invariant subset of the gradient flow of $f$, which we assume is isolated, that consists only of (1) a subset of critical points and (2) all points on all connecting trajectories of that subset.} We will refer to $\calC_f\equiv\calC(S_f)$ as the \emph{maximal Conley index} of $f$.

\begin{thm}[Theorem 1.29 \cite{Pea94}]
Suppose $N$ is an isolating neighborhood of $S_f$, then there exists a smooth function $f'\in C^\infty(X)$, $C^2$-close to $f$, such that 
\begin{enumerate}
\item $f=f'$ on a neighborhood of $X-\operatorname{int}N$, 
\item and $f'$ has only non-degenerate critical points which are all in $N$.
\end{enumerate}
\end{thm}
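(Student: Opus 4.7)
The plan is to construct $f'$ by adding a perturbation to $f$ that is supported strictly inside $N$, Morse near $S_f$, and small enough in the transition region to avoid creating new critical points. The key observation is that since $N$ is an isolating neighborhood, every critical point of $f$ lying in $N$ is a fixed point of the gradient flow and hence belongs to $S_f\subset\operatorname{int} N$.

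First I would choose nested open neighborhoods $V_0\subset\overline{V_0}\subset V_1\subset\overline{V_1}\subset\operatorname{int} N$ of $S_f$, together with a smooth cutoff $\rho:X\to[0,1]$ satisfying $\rho\equiv 1$ on $V_0$ and $\operatorname{supp}(\rho)\subset V_1$. The compact transition region $K\equiv\overline{V_1}\setminus V_0$ contains no critical points of $f$, so there exists $c>0$ with $|\grad f|_g\geq c$ on $K$. Next I would invoke the standard density of Morse functions---proved via Sard's theorem applied to $df$ in finitely many coordinate charts and glued by a partition of unity---to produce, for any $\epsilon>0$, a smooth $h_0\in C^\infty(X)$ with $\|h_0\|_{C^2(\overline{V_1})}<\epsilon$ such that $f+h_0$ is Morse on $V_1$. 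Setting $f'\equiv f+\rho\cdot h_0$, I see that $f'\equiv f$ on the open set $X\setminus V_1$, which is a neighborhood of $X-\operatorname{int} N$, while on $V_0$ we have $f'=f+h_0$, which is Morse.

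The last thing to verify is that $f'$ has no critical points in the transition region $K$. A direct computation gives
\[
\grad f' = \grad f + (\grad\rho)\cdot h_0 + \rho\cdot\grad h_0,
\]
so on $K$ I estimate $|\grad f'-\grad f|_g\leq(1+\|\grad\rho\|_\infty)\epsilon$. Choosing $\epsilon<c/(1+\|\grad\rho\|_\infty)$ then forces $|\grad f'|_g>0$ on $K$, showing that the only critical points of $f'$ in $N$ lie in $V_0$ and are therefore non-degenerate. The main obstacle is the density input: I need the perturbing function to be simultaneously $C^2$-small on the whole compact set $\overline{V_1}$ and to make $f+h_0$ Morse there. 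This is handled by covering $\overline{V_1}$ with finitely many coordinate charts and applying Sard's theorem inductively to add small linear perturbations in each chart (cut off by a partition of unity), chosen so as to preserve the Morse property already achieved on previously treated charts.
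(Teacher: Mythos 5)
The paper cites this as Theorem 1.29 of \cite{Pea94} without reproducing a proof, so there is no argument in the paper to compare against; your cutoff-perturbation argument is correct and is the natural one. Two small precision points worth tightening. First, your ``key observation'' only establishes that critical points of $f$ lying \emph{inside} $N$ belong to $S_f$; part (2) of the statement also requires that $f$ have no critical points \emph{outside} $N$ (since $f'=f$ there), and this follows from the paper's convention that $S_f$ is \emph{defined} to contain every critical point of $f$ together with the fact that $S_f\subset\operatorname{int}N$ --- so it is cleaner to invoke that definition directly rather than the fixed-point argument. Second, when $V_1$ is open, $X\setminus V_1$ is closed rather than open; the open neighborhood of $X-\operatorname{int}N$ on which $f'\equiv f$ is $X\setminus\operatorname{supp}\rho$, which is what your construction actually produces. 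The essential mechanism --- the uniform lower bound $c>0$ for $\lvert\grad f\rvert_g$ on the compact collar $K$, beaten by the $C^2$-small Morse-making perturbation obtained from the inductive Sard argument --- is sound, and the $C^2$-closeness of $f'$ to $f$ follows since $\lVert\rho h_0\rVert_{C^2}$ is controlled by $\lVert\rho\rVert_{C^2}$ and $\lVert h_0\rVert_{C^2(\overline{V_1})}$.
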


When perturbing $f$ into $f'$ using the previous theorem, we observe that 
    \begin{equation}
    \calC_f\simeq\calC_{f'}
    \end{equation}
by Theorem \ref{thm:continuation}. We now wish to perturb $g$ into $g'$ such that $(f',g')$ is Morse-Smale while preserving $\calC_{f'}$ (again, by appealing to Theorem \ref{thm:continuation}). But this is straightforward since we may choose an isolating neighborhood $N$ of $S_{f'}$ and perturb $g$ into $g'$ on a strictly smaller isolating neighborhood of $S_{f'}$ contained in $N$. In particular, after a $C^2$-small perturbation, we may assume $(f,g)$ is Morse-Smale.

Following Section \ref{sec:morsehomology}, we may define the Morse chain complex $CM_*(X;f,g;\bbZ/2)$ with a differential that counts connecting trajectories in dimension 0 Morse moduli spaces. The upshot is the following result, due to Floer.

\begin{thm}[Theorem 1 \cite{Flo89c}]\label{thm:floer1}
We have the following isomorphism:
    \begin{equation}
    HM_*(X;f,g;\bbZ/2)\cong\widetilde{H}_*(\calC_f;\bbZ/2).
    \end{equation}
\end{thm}

\begin{rem}
Of course, the previous theorem also holds with $\bbZ$-coefficients after choosing a system of coherent orientations. 
\end{rem}

\subsection{S-duality}
Given a Morse-Smale pair $(f,g)$, we may consider the Morse-Smale pair $(-f,g)$; it follows that the gradient flow of $-f$ is the reverse of the gradient flow of $f$. In particular, we have the equality $S_f=S_{-f}$, and we wish to relate $\calC_f$ and $\calC_{-f}$.\footnote{Again, the following results may be stated more generally for any invariant subset of the gradient flow of $f$, which we assume is isolated, that consists only of (1) a subset of critical points and (2) all points on all connecting trajectories of that subset. We would then be relating $\calC(S)$ and $\calC(-S)$, where $\calC(-S)$ is the Conley index of $S$ with respect to the gradient flow of $-f$.}

\begin{thm}[Theorem 2.1 \cite{McC92}]\label{thm:mccord}
Suppose $X$ is orientable, then
    \begin{equation}
    \widetilde{H}^m(\calC_f)\cong\widetilde{H}_{n-m}(\calC_{-f}).
    \end{equation}
\end{thm}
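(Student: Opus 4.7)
The plan is to reduce the statement to classical Poincaré–Lefschetz duality for a compact orientable manifold with boundary, via the index triples provided by Theorem \ref{thm:rs}. First I would invoke Theorem \ref{thm:rs} to pick an index triple $(N, N_-, N_+)$ of $S_f = S_{-f}$: by construction $(N, N_-)$ is an index pair for the forward gradient flow of $f$ and $(N, N_+)$ is an index pair for the reverse flow, so by the definition of the Conley index (and Definition \ref{defin:maximalconleyindex}) we have
\begin{equation}
\calC_f \simeq N/N_-, \qquad \calC_{-f} \simeq N/N_+.
\end{equation}
Moreover $N$ is a compact smooth $n$-manifold with boundary, and $\partial N = N_-\cup N_+$ with the two pieces meeting along a common $(n-2)$-dimensional corner locus.

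Next I would promote the pair $(N, N_\pm)$ to a good pair so that reduced (co)homology of the quotient computes relative (co)homology. Since $(N, N_-)$ and $(N, N_+)$ admit CW structures (as in the proof of Lemma \ref{lem:finitecwcomplex}), one has
\begin{equation}
\tilde{H}^m(\calC_f)\cong H^m(N,N_-),\qquad \tilde{H}_{n-m}(\calC_{-f})\cong H_{n-m}(N,N_+).
\end{equation}
Since $X$ is orientable, so is the open set $N\setminus\partial N$, and hence $N$ carries a fundamental class $[N,\partial N]\in H_n(N,\partial N)$. Classical Poincaré–Lefschetz duality for the triad $(N; N_-, N_+)$ then gives the cap product isomorphism
\begin{equation}
\cdot\frown[N,\partial N]:H^m(N,N_-)\xrightarrow{\sim} H_{n-m}(N,N_+),
\end{equation}
which combined with the identifications above yields the desired isomorphism $\tilde{H}^m(\calC_f)\cong \tilde{H}_{n-m}(\calC_{-f})$.

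The main obstacle is justifying the triad version of Poincaré–Lefschetz duality, i.e.\ that the boundary of the compact manifold $N$ splits as $N_-\cup N_+$ with $N_-\cap N_+$ a codimension one submanifold of $\partial N$ in a way suitable to dualize $N_-$ to $N_+$. This is precisely what the third clause of Theorem \ref{thm:rs} provides: $N_-, N_+$ are codimension-zero submanifolds of $\partial N$ meeting along their common boundary, so the triad is manifold-with-corners in the standard sense and the triad duality isomorphism applies (for example via the version in Hatcher's Algebraic Topology, Theorem 3.43, combined with a collar argument to smooth the corner). A minor secondary point is to note that the conclusion is independent of the choice of index triple, which follows since both sides of the claimed isomorphism are invariants of the pointed homotopy types $\calC_f$ and $\calC_{-f}$, and from the naturality of cap product under change of index triple.
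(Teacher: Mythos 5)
Your proof is correct, but it takes a different route than the paper. The paper itself cites McCord for this statement and does not reprove it directly; instead, a few lines later it presents Pears' strengthening (Theorem \ref{thm:sduality}, preceded by the statement that $\Sigma^\infty\calC_f$ and the desuspended relative Thom spectrum $\Sigma^{-\tilde m}\Sigma^\infty(\th(i_N^*\zeta)/\th(i_{N_+}^*\zeta))$ are strongly dual objects in $\ho\spectra$), deduces Poincar\'e--Lefschetz duality with local coefficients $o(\zeta)$, and then observes that when $X$ is orientable the local system trivializes and Theorem \ref{thm:mccord} is recovered. Your argument is the classical one: use Theorem \ref{thm:rs} to get the index triple $(N,N_-,N_+)$, identify $\tilde H^m(\calC_f)\cong H^m(N,N_-)$ and $\tilde H_{n-m}(\calC_{-f})\cong H_{n-m}(N,N_+)$, and apply triad Lefschetz duality for the compact oriented $n$-manifold $N$ with $\partial N = N_-\cup N_+$ meeting along a common corner. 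This is essentially McCord's original proof and is more elementary; what it does not give you, and what the paper's route via Pears buys, is the spectrum-level duality needed later in Proposition \ref{prop:steenrod}, where the isomorphism $\tilde H^m(\calC_f)\cong\tilde H_{n-m}(\calC_{-f})$ must intertwine Steenrod squares with dual/conjugate Steenrod squares, and for that the duality must be realized by an S-duality of spectra rather than just an isomorphism on (co)homology. Your closing remark about independence of the choice of index triple is also fine, since both sides are homotopy invariants of $\calC_f$, $\calC_{-f}$.
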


For our purposes, we will require a stronger version of the previous theorem, originally proved by Pears. In particular, we wish to show the isomorphism in the previous theorem is induced from a relation of spectra. First, we quickly recall the notion of \emph{Spanier-Whitehead duality} (or \emph{S-duality}) in an arbitrary symmetric monoidal category, cf. \cite{BG99}. 

Let $(\calC,\otimes,\unit)$ be a symmetric monoidal category. An object $A\in\calC$ is called \emph{weakly dualizable} if there exists an object $\calD A\in\calC$ together with a natural bijection
    \begin{equation}\label{eq:adjunction}
    \hom(-\otimes A,\unit)\cong\hom(-,\calD A).
    \end{equation}
We call $A$ \emph{reflexive} if both $\calD A$ and $\calD\calD A$ exist, and the morphism $A\to\calD\calD A$ corresponding to the composition
    \begin{equation}
    A\otimes\calD A\xrightarrow{\sim}\calD A\otimes A\xrightarrow{\epsilon_A}\unit,
    \end{equation}
where $\epsilon_A$ is the counit morphism, is an isomorphism. Suppose $A,B\in\calC$ are objects such that $A$, $B$, and $A\otimes B$ are weakly dualizable, then we may define the morphism $\mu_{A,B}:\calD A\otimes\calD B\to\calD(A\otimes B)$ which corresponds to the morphism
    \begin{equation}
    \calD A\otimes\calD B\otimes A\otimes B\xrightarrow{\sim}\calD A\otimes\calD B\otimes B\otimes A\xrightarrow{1\otimes\epsilon_B\otimes1}\calD A\otimes A\xrightarrow{\epsilon_A}\unit.
    \end{equation}
We call $A$ \emph{strongly dualizable} if it is reflexive and $\mu_{A,\calD A}$ is an isomorphism. Note, weak and strong duals are unique up to isomorphism. Moreover, $A$ admits a strong dual $\calD A$ if and only if, for all objects $B,C\in\calC$, the map 
    \begin{equation}
    \hom(B,C\otimes\calD A)\xrightarrow{(-)\otimes A}\hom(B\otimes A,C\otimes\calD A\otimes A)\xrightarrow{1_C\otimes\epsilon_A\circ(-)}\hom(B\otimes A,C)
    \end{equation}
is an isomorphism.

We denote by $\spectra$ the stable $\infty$-category of spectra and by $\ho\spectra$ its homotopy category, where the latter is the usual stable homotopy category. Recall, $\ho\spectra$ is a symmetric monoidal category via the smash product of spectra.

We return to gradient flow. By Theorem \ref{thm:rs}, we may choose an index triple $(N,N_-,N_+)$ for $S_f$ such that
    \begin{equation}
    \calC_f\simeq N/N_-\;\;\textrm{and}\;\;\calC_{-f}\simeq N/N_+.
    \end{equation}
Let $\widetilde{m}\in\bbZ_{\geq0}$ be sufficiently large so that we have an embedding $\iota:X\hookrightarrow\bbR^{\widetilde{m}}$; we denote by $\zeta$ the normal bundle of $\iota(X)$.

\begin{thm}[Theorem 3.8 \cite{Pea94}]
Let $\iota_N:N\hookrightarrow\bbR^{\widetilde{m}}$ resp. $\iota_{N_+}:N_+\hookrightarrow\bbR^{\widetilde{m}}$ be the restriction of $\iota$ to $N$ resp. $N_+$. The spectra 
    \begin{equation}
    \Sigma^\infty\calC_f\;\;\textrm{and}\;\;\Sigma^{-\widetilde{m}}\Sigma^\infty \big(\th(\iota_N^*\zeta)/\th(\iota_{N_+}^*\zeta)\big)
    \end{equation}
are strongly dual objects in $\ho\spectra$.
\end{thm}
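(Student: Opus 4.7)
The plan is to establish this as an instance of Atiyah-Lefschetz duality for manifolds-with-corners, realized through Pontryagin-Thom collapse maps. The index triple equips $N$ with the structure of a compact smooth $n$-manifold whose boundary $\partial N = N_- \cup N_+$ meets along the corner $W \equiv \partial N_- = \partial N_+$. Classical Atiyah duality handles the case $N_+ = \emptyset$, giving the self-duality of $\Sigma^\infty_+ N$ with $\Sigma^{-\tilde m}\Sigma^\infty \th(i_N^*\zeta)$; what is required is to incorporate the splitting of $\partial N$ into the duality.

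First, I would construct the evaluation map. Fix a tubular neighborhood of $\iota(X)$ in $\bbR^{\tilde m}$ identified with a neighborhood of the zero section in $\zeta$, and choose collars of $N_-$ and $N_+$ in $N$ that are compatible near $W$. The Pontryagin-Thom construction applied to the diagonal $\Delta: N \hookrightarrow N \times \iota(X) \hookrightarrow N \times \bbR^{\tilde m}$ yields, in the closed case, a collapse from $\Sigma^{\tilde m} N_+$ to $N_+ \wedge \th(i_N^*\zeta)$. Using the compatible collars to send the $N_-$ factor on the left and the $\th(i_{N_+}^*\zeta)$-factor on the right to the basepoint, this descends to
\[
\varepsilon: \Sigma^\infty (\th(i_N^*\zeta)/\th(i_{N_+}^*\zeta)) \wedge \Sigma^\infty (N/N_-) \to \Sigma^{\tilde m}\bbS.
\]
A symmetric Pontryagin-Thom collapse with the roles of $N_-$ and $N_+$ interchanged produces the candidate coevaluation
\[
\eta: \Sigma^{\tilde m}\bbS \to \Sigma^\infty (N/N_-) \wedge \Sigma^\infty (\th(i_N^*\zeta)/\th(i_{N_+}^*\zeta)).
\]

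Second, I would verify the triangle identities for $(\varepsilon,\eta)$, which together with the reflexivity of finite CW spectra (Lemma \ref{lem:finitecwcomplex}) yields strong dualizability in $\ho\spectra$ after desuspending by $\tilde m$. The key geometric input is Atiyah's original computation that, for a closed manifold $M$ embedded in $\bbR^{\tilde m}$, the composite formed from the diagonal collapse and its adjoint is homotopic to the identity via a straight-line homotopy inside the tubular neighborhood of $\Delta_M$. In the relative setting this homotopy must respect the collar decomposition near $W$ so as to descend to the quotients $N/N_-$ and $\th(i_N^*\zeta)/\th(i_{N_+}^*\zeta)$; this is precisely what the compatibility of the collars with the tubular neighborhood of $\iota(X)$ guarantees.

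The main obstacle is the careful handling of the corner $W$: collars of $N_\pm$ in $N$, tubular neighborhoods of $\iota(X)$ in $\bbR^{\tilde m}$, and the Pontryagin-Thom collapses must be chosen so that they fit together coherently near $W$, both in order for $\varepsilon$ and $\eta$ to descend to the quotients and for the straight-line homotopies in the triangle identities to restrict well. As an alternative route, one can reduce to the closed case via a doubling construction: let $DN = N \cup_{N_+} N$ be the double of $N$ along $N_+$, smooth the corners along $W$, and apply closed Atiyah-Lefschetz duality to $(DN, \partial DN)$; the cofiber sequence $\Sigma^\infty (N_+ / W) \to \Sigma^\infty (\partial N / W) \to \Sigma^\infty (N_- / W)$ together with compatibility of Spanier-Whitehead duality with cofiber sequences then extracts the stated duality by a five-lemma argument in $\ho\spectra$.
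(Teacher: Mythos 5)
The paper does not prove this result; it cites Theorem~3.8 of \cite{Pea94} and uses it as a black box, so there is no internal argument to compare against. Your Pontryagin--Thom strategy is the standard route to relative Atiyah--Lefschetz duality for an index triple and is plausibly close to Pears's original argument. That said, your stated base case has the roles of $N_\pm$ reversed: when $N_+=\emptyset$ (so that $\partial N=N_-$), the Conley index $N/N_-$ is $N/\partial N$, and the theorem degenerates to the duality of $\Sigma^\infty(N/\partial N)$ with $\Sigma^{-\tilde{m}}\Sigma^\infty\th(i_N^*\zeta)$ -- not, as you write, the duality of $\Sigma^\infty_+N$ with that Thom spectrum. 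The latter statement is the case $N_-=\emptyset$, for which the correct dual is $\Sigma^{-\tilde{m}}\Sigma^\infty(\th(i_N^*\zeta)/\th(i_{\partial N}^*\zeta))$. (Relatedly, writing $N_+$ for ``$N$ with disjoint basepoint'' clashes badly with the exit set $N_+$ and should be avoided.) More seriously, the one substantive difficulty -- arranging the collars of $N_-$, $N_+$ and the tubular neighborhood of $\iota(X)$ so that the Pontryagin--Thom collapses descend to the stated quotients \emph{and} the straight-line homotopies in the triangle identities restrict -- is named but not carried out. ``Choose them compatibly near $W$'' does not identify the needed coherence condition or show it can be met; this is where essentially all the content of the theorem lives.

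The doubling alternative is not obviously a shortcut either. The cofiber sequence you invoke, $\Sigma^\infty(N_+/W)\to\Sigma^\infty(\partial N/W)\to\Sigma^\infty(N_-/W)$, involves only $(n-1)$-dimensional pieces of $\partial N$ and does not by itself yield a duality statement about the $n$-dimensional $N/N_-$. A more promising sequence is $\Sigma^\infty(N_+/W)\to\Sigma^\infty(N/N_-)\to\Sigma^\infty(N/\partial N)$, combined with Atiyah--Lefschetz duality for the pairs $(N,\partial N)$ and $(N_+,W)$, taking care that the normal bundle of $N_+$ in $\bbR^{\tilde{m}}$ is $i_{N_+}^*\zeta$ plus a trivial line (which accounts for the degree shift between the two triangles). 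Even then, the five-lemma step requires matching the connecting maps of the two cofiber triangles, and that matching is again a geometric claim about collars near $W$; the corner analysis is not actually circumvented.
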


\begin{cor}[Theorem 3.11 \cite{Pea94}]\label{thm:sduality}
We have the following isomorphism:
    \begin{equation}
    \widetilde{H}^m(\calC_f)\cong H_{n-m}(N,N_+;o(\zeta)). 
    \end{equation}
Here, $H_*(N,N_+;o(\zeta))$ is homology of the pair $(N,N_+)$ with local coefficients in the orientation local system $o(\zeta)$ of $\zeta$.     
\end{cor}

\begin{rem}
If $X$ is orientable, then $o(\zeta)$ is trivial and the previous corollary recovers Theorem \ref{thm:mccord}.
\end{rem}

\subsection{Co-H-spaces}
In this subsection, we will take a quick digression to recall the basic notions surrounding co-H-spaces. A \emph{co-H-space} is a path-connected pointed space $Y$ together with a co-multiplication $Y\to Y\vee Y$ such that the compositions $Y\to Y\vee Y\to Y$, where the second map is the projection onto the first resp. second factor, are homotopic to the identity. 

A quick way to cook up co-H-spaces is as follows. First, consider the circle $S^1$ with the co-H-space structure given by the co-multiplication
    \begin{equation}
    t\mapsto\begin{cases}
    (2t,*) & 0\leq t\leq 1/2\\
    (*, 2t-1) & 1/2\leq t\leq1
    \end{cases}
    ,
    \end{equation}
where $t\in S^1$ after using the identification 
    \begin{equation}
    S^1\cong[0,1]/\big(\{0\}\cup\{1\}\big).
    \end{equation}
Now, suppose $Y$ is a co-H-space and $Z$ is any pointed space, then $Y\wedge Z$ is a co-H-space with co-multiplication given by smashing the co-multiplication on $Y$ together with the identity map on $Z$: 
    \begin{equation}
    Y\wedge Z\to (Y\vee Y)\wedge Z\simeq(Y\wedge Z)\vee(Y\wedge Z).
    \end{equation}
In particular, the (reduced) suspension $\Sigma Z\equiv S^1\wedge Z$ of $Z$ is a co-H-space. 

Another way to prove the suspension of any (normal\footnote{A \emph{normal} space is a space such that every pair of disjoint closed sets can be separated by disjoint open neighborhoods. In particular, every CW-complex is normal.}) pointed space is a co-H-space is as follows. Recall that we have defined the LS category of a space. Since the suspension of any pointed space is the union of two cones, which are contractible, we see that the LS category of a suspension is 1. We combine this observation together with the following theorem.

\begin{thm}[Theorem 1.55 \cite{CLOT03}]\label{thm:clot}
Suppose $Y$ is a normal path-connected pointed space, then $Y$ is a co-H-space if and only if $\ls(Y)\leq1$. 
\end{thm}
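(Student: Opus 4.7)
The plan is to prove the equivalence by passing through an intermediate characterization, namely that $\ls(Y)\leq 1$ is equivalent to the diagonal $\Delta:Y\to Y\times Y$ factoring up to homotopy through the inclusion $\iota:Y\vee Y\hookrightarrow Y\times Y$. The passage between this factorization and the co-H-space structure is formal: a candidate map $\mu:Y\to Y\vee Y$ satisfies the co-H unit axioms $p_j\circ\mu\simeq\mathrm{id}_Y$ (for the two wedge projections $p_j$) if and only if $\iota\circ\mu\simeq\Delta$, since $\Delta$ is uniquely characterized by the analogous identity for the Cartesian projections.

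For $\ls(Y)\leq 1\Rightarrow Y$ is a co-H-space, write $Y=A_0\cup A_1$ with each $A_i$ open and contractible. By combining a contraction of $A_i$ to an interior point with a path in $Y$ from that point to the basepoint (using path-connectedness of $Y$), I build null-homotopies $h_i:A_i\times I\to Y$ with $h_i(-,0)$ the inclusion and $h_i(-,1)=*$. Applying Urysohn's lemma to the disjoint closed sets $Y\setminus A_0$ and $Y\setminus A_1$ yields $u:Y\to[0,1]$ with $u\equiv 0$ on the former and $u\equiv 1$ on the latter. Fix a continuous cutoff $\phi:[0,1]\to[0,1]$ with $\phi\equiv 0$ on $[0,1/4]$ and $\phi\equiv 1$ on $[1/2,1]$, and define $F:Y\times I\to Y\times Y$ by
\[
F(y,t)=\bigl(\alpha(y,t),\beta(y,t)\bigr),
\]
where $\alpha(y,t)=h_0(y,\min(1,2t\phi(u(y))))$ for $y\in A_0$ and $\alpha(y,t)=y$ otherwise, and $\beta$ is defined symmetrically with $h_1$ and $\phi(1-u(y))$ in place of $h_0$ and $\phi(u(y))$. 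Continuity at the boundaries of $A_i$ is ensured by the vanishing of $\phi$ near $0$ together with continuity of $u$; and since one of $u(y),1-u(y)$ always lies in $[1/2,1]$, the equality $\max(\phi(u(y)),\phi(1-u(y)))=1$ forces at least one coordinate of $F(y,1)$ to be $*$, so $F(y,1)\in Y\vee Y$. Since $F(-,0)=\Delta$, the map $\tilde\Delta:=F(-,1)$ supplies the required factorization, and hence the co-H-space structure.

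For the converse, given a co-H-space structure the corresponding factorization of $\Delta$ through the based cofibration $\iota:Y\vee Y\hookrightarrow Y\times Y$ is equivalent to the nullity of the reduced diagonal $\bar\Delta:Y\to Y\wedge Y$. From a null-homotopy of $\bar\Delta$ one extracts an open cover of $Y$ by two honestly contractible subsets. This direction is the main obstacle: producing genuinely contractible open subsets (rather than merely opens that are null-homotopic in $Y$) is delicate for an arbitrary normal path-connected space without any nice neighborhood structure at the basepoint. The standard resolution uses the Ganea construction, in which $\ls(Y)\leq 1$ is characterized by a homotopy section of the Ganea fibration $G^1(Y)\to Y$; such a section is in turn produced from the reduced-diagonal null-homotopy via mapping cylinder arguments. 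Full details appear in \cite[Chapter 1]{CLOT03}.
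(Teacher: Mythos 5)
The paper gives no proof: the theorem is simply cited from \cite{CLOT03}. Your forward implication ($\ls(Y)\leq 1$ implies co-H) is correct. The reduction to factoring $\Delta$ through $\iota$ up to homotopy is right, and the explicit $F$ built from the Urysohn function $u$, the cutoff $\phi$, and the null-homotopies $h_i$ (which use path-connectedness, as they must — recall the paper's warning about $S^0$) is genuinely continuous because $u\equiv0$ on $Y\setminus A_0$, $u\equiv1$ on $Y\setminus A_1$, and $\phi$ vanishes near $0$; and $F(-,1)$ lands in $Y\vee Y$ since one of $u(y),1-u(y)$ lies in $[1/2,1]$. This is the only direction the paper actually uses (in Corollary \ref{cor:cohspace} and in the suspension discussion), so your concrete argument improves on the bare citation for the paper's purposes. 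The converse, however, is only sketched: you correctly reduce to the nullity of the reduced diagonal $\overline{\Delta}:Y\to Y\wedge Y$ and correctly flag that producing two contractible (or even just categorical) opens from such a null-homotopy requires real machinery, but you do not carry it out. As a proof of the stated biconditional the proposal is therefore incomplete, though no worse than the paper's own reliance on \cite{CLOT03} for that half.
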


\begin{warning}
A word of caution, we would really like to emphasize the path-connected hypothesis in Theorem \ref{thm:clot} -- in particular, $S^0$ is not a co-H-space.
\end{warning}

It is natural to wonder whether every co-H-space is given by the suspension of a pointed space. The answer is no, see \cite{BH60} for examples of co-H-spaces that are not suspensions. However, every co-H-space is at least related to a suspension.

\begin{thm}[Theorem 1.1 \cite{Gan70}]\label{thm:ganea}
Suppose $Y$ is a co-H-space, then there exists a retraction $\Sigma\Omega Y\to Y$. 
\end{thm}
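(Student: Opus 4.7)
The plan is to identify the canonical counit map $\epsilon : \Sigma\Omega Y \to Y$ of the $(\Sigma,\Omega)$-adjunction with the first Ganea fibration over $Y$, and then to invoke Ganea's characterization of Lusternik--Schnirelmann category by cross-sections of these fibrations.

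First, I would build the first Ganea space $G_1(Y)$ explicitly. Starting from the path-loop fibration $\Omega Y \to PY \to Y$, where $PY$ denotes the based path space (which is contractible) and the map to $Y$ is endpoint evaluation, I define $G_1(Y)$ as the mapping cone of the inclusion $\Omega Y \hookrightarrow PY$. Contractibility of $PY$ yields a natural homotopy equivalence $G_1(Y) \simeq PY/\Omega Y \simeq \Sigma\Omega Y$, and under this equivalence the induced map $G_1(Y) \to Y$ is identified with $\epsilon$. So the problem reduces to exhibiting a homotopy section of $G_1(Y) \to Y$.

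By Theorem \ref{thm:clot}, the hypothesis that $Y$ is a co-H-space gives $\ls(Y) \leq 1$. Ganea's theorem (c.f.\ \cite{CLOT03}) asserts that for a normal, path-connected space $Y$ one has $\ls(Y) \leq n$ if and only if $G_n(Y) \to Y$ admits a homotopy section. Applying this with $n = 1$ produces a map $s : Y \to \Sigma\Omega Y$ satisfying $\epsilon \circ s \simeq \mathrm{id}_Y$, exhibiting $\epsilon$ as the desired retraction.

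The main obstacle, were one to unpack Ganea's theorem rather than cite it, is the direction $\ls(Y) \leq n \Rightarrow$ section exists: one must manufacture a genuine homotopy section of $G_n(Y) \to Y$ from only the combinatorial data of a cover of $Y$ by $n+1$ contractible opens, typically by inductively using the fiber-join description of $G_n(Y)$ and patching local nullhomotopies. For the purposes of this paper this is imported as a black-box result, and the only remaining verification is the homotopy equivalence $G_1(Y) \simeq \Sigma\Omega Y$, which is immediate from contractibility of $PY$.
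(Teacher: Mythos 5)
The paper does not prove this statement; it imports it by citation to Ganea's \emph{Cogroups and suspensions} (Theorem~1.1 there), so there is no in-paper proof to compare against. Your derivation is correct and is a natural route given the paper's existing toolkit: co-H-space $\Rightarrow \ls(Y)\leq 1$ by Theorem~\ref{thm:clot}, hence the first Ganea fibration $p_1:G_1(Y)\to Y$ admits a homotopy section by the Ganea fibration criterion for LS category, and $G_1(Y)\simeq\Sigma\Omega Y$ with $p_1$ corresponding to the counit $\epsilon$, giving the retraction. Two small remarks. First, the identification of $p_1$ with $\epsilon$ under the equivalence $G_1(Y)\simeq\Sigma\Omega Y$ is a bit more than ``immediate from contractibility of $PY$'': the collapse $G_1(Y)\to G_1(Y)/PY\cong\Sigma\Omega Y$ is a homotopy equivalence, but $p_1$ does not literally factor through it (since $p_1\vert_{PY}=p_0$ is nonconstant), so one must choose a homotopy inverse $\Sigma\Omega Y\to G_1(Y)$ and check that the composite with $p_1$ is homotopic to $(t,\gamma)\mapsto\gamma(t)$ -- a standard but nonvacuous verification. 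Second, the Ganea criterion you cite requires $Y$ normal (or paracompact) and path-connected; this matches the hypotheses in Theorem~\ref{thm:clot} and holds for all of the paper's applications (the Conley indices involved are finite CW complexes), but it is a genuine hypothesis on $Y$ rather than a free consequence of being a co-H-space. Ganea's original argument is a more direct construction of the section from the comultiplication; your LS-category route is a clean alternative that leans on machinery the paper has already set up.
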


\subsection{Isolated critical points}
We return to gradient flow. Recall, our setup is as follows. Let $X$ be a (not necessarily closed) smooth $n$-manifold, $f\in C^\infty(X)$ a smooth function, and $g$ a Riemannian metric on $X$. We will not necessarily assume $(f,g)$ is Morse-Smale, however, we will assume there exists $x\in\crit(f)$ such that $x$ is possibly degenerate and isolated. In particular, $\{x\}\subset X$ is an isolated invariant subset of the gradient flow of $f$.\footnote{Recall, we actually consider the negative gradient flow and drop the ``negative'' for brevity.} We may naturally construct an index pair $(N,N_-)$ of $\{x\}$, where $N$ is given by a small closed $n$-ball containing $x$ and $N_-$ is the subset of $\partial N$ where $-\grad f$ points outwards. We call the index pair just constructed a \emph{standard index pair} of $\{x\}$.

\begin{lem}\label{lem:pathconnected}
Suppose $x$ is not a minimum, then $\calC\big(\{x\}\big)$ is path-connected.
\end{lem}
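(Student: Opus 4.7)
The plan is to pass to an explicit standard index pair $(N,N_-)$, observe that $N$ is a closed $n$-ball and is therefore path-connected, and then show that $N_-$ is non-empty; once this is established, $\calC(\{x\})\simeq N/N_-$ is a quotient of a path-connected space by a non-empty subspace, hence itself path-connected. So the only real content is the non-emptiness of $N_-$.

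I would shrink $N$ so that $x$ is the unique critical point of $f$ in $N$, which is possible since $\{x\}$ was assumed to be an isolated invariant set of the gradient flow and in particular $x$ is isolated in $\crit(f)$. Because $x$ is not a local (in particular, not a global) minimum, in every neighborhood of $x$ there exists some $p$ with $f(p)<f(x)$; pick such a $p$ in the interior of $N$. Along the forward gradient trajectory $\gamma_p$, the value $f\circ\gamma_p$ is non-increasing and bounded above by $f(p)<f(x)$, so the trajectory cannot converge to $x$.

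Now I want to conclude that $\gamma_p$ must leave $N$ in finite positive time. If instead $\gamma_p$ remained in the compact set $N$ for all $s\geq 0$, then its $\omega$-limit set $\omega(p)$ would be a non-empty, compact, invariant subset of $N$ on which $f$ is constant (since $f\circ\gamma_p$ is monotone and bounded); any point of $\omega(p)$ would then satisfy $\grad f=0$, so $\omega(p)\subset\crit(f)\cap N=\{x\}$. This forces $\gamma_p(s)\to x$, contradicting the bound $f\circ\gamma_p\leq f(p)<f(x)$. Hence $\gamma_p$ exits $N$ at some first time $s_0>0$, and by the exit-set axiom of the index pair there exists $s_0'\in[0,s_0)$ with $\gamma_p(s_0')\in N_-$, showing $N_-\neq\emptyset$.

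The main technical step is the $\omega$-limit argument of the previous paragraph; it is the only place where the gradient (rather than general flow) structure is used, via $f$ as a Lyapunov function together with the local isolation of $x$ in $\crit(f)$. Everything else is soft: the standard index pair has $N$ a closed ball with $N_-\subset\partial N$, so path-connectedness of $N/N_-$ follows formally once $N_-$ is known to be non-empty (any point of $N$ can be joined by a path in $N$ to a point of $N_-$, which maps to the basepoint in the quotient).
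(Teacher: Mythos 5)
Your argument is correct and follows essentially the same route as the paper: pass to the standard index pair $(N,N_-)$ with $N$ a small closed ball, observe $N$ is path-connected, show $N_-\neq\emptyset$, and conclude that $N/N_-$ is a continuous image of the path-connected $N$. The only difference is that the paper simply asserts $N_-\neq\emptyset$ as an immediate consequence of $x$ not being a minimum, whereas you supply the underlying $\omega$-limit-set/Lyapunov argument (trajectories starting below the level $f(x)$ cannot converge to $x$, hence by LaSalle must exit the compact $N$, and the exit-set axiom then produces a point of $N_-$). That is a perfectly sound and standard way to justify the paper's one-line claim.
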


\begin{proof}
When $x$ is not a minimum, we see that a standard index pair $(N,N_-)$ of $\{x\}$ satisfies $N_-\neq\emptyset$. In particular, since $N$ is path-connected and $N_-$ is non-empty, it follows $\calC\big(\{x\}\big)\simeq N/N_-$ is the continuous image of a path-connected space and thus path-connected.
\end{proof}

\begin{rem}
We exclude the case that $x$ is a minimum because, if $x$ is a minimum, we see that $N_-=\emptyset$ for a standard index pair $(N,N_-)$ of $\{x\}$; this is because we consider the negative gradient flow. In particular, $\calC\big(\{x\}\big)\simeq S^0$ since quotienting by $\emptyset$ adds a disjoint basepoint.
\end{rem}

\begin{lem}[Corollary 4.7 \cite{Pea94}]
We have that
    \begin{equation}
    \ls\Big(\calC\big(\{x\}\big)\Big)\leq1.\footnote{Recall, there is a convention difference for LS category in the present article and in \cite{Pea94}.}
    \end{equation}
\end{lem}

\begin{cor}\label{cor:cohspace}
Suppose $x$ is not a minimum, then $\calC\big(\{x\}\big)$ is a co-H-space.\footnote{Again, we exclude the case of a minimum because, if $x$ is a minimum, $\calC\big(\{x\}\big)\simeq S^0$ is not a co-H-space.} 
\end{cor}

\begin{proof}
Use the previous lemma with Theorem \ref{thm:clot} and Lemmas \ref{lem:finitecwcomplex} and \ref{lem:pathconnected}.
\end{proof}

As an aside, we should note that Pears proved the following general existence result. Recall, if $Z$ is any finite CW-complex, then $Z$ is homotopic to a smooth manifold $Z'$ such that there is an embedding $Z'\hookrightarrow S^{\widetilde{m}}$ (for $\widetilde{m}$ sufficiently large). 

\begin{thm}[Theorem 4.9 \cite{Pea94}]
There exists a smooth function on $\bbR^{\widetilde{m}+1}$ with an isolated critical point at the origin such that $\calC\big(\{0\}\big)\simeq\Sigma Z$.  
\end{thm}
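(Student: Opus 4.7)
The plan is to construct, for the given finite CW-complex $Z$, a smooth function $f$ on $\bbR^{\tilde{m}+1}$ whose gradient flow has $\{0\}$ as its unique critical point and whose associated Conley index pair is $(D^{\tilde{m}+1}, \bar{V})$ for a suitable closed tubular neighborhood $\bar{V}$ of $Z$ sitting inside $S^{\tilde{m}}$. First, invoke the preceding aside to replace $Z$ by a homotopy equivalent smooth closed submanifold $Z' \hookrightarrow S^{\tilde{m}}$; let $V \subset S^{\tilde{m}}$ be a sufficiently small open tubular neighborhood of $Z'$, and choose a smooth ``sign function'' $\chi : S^{\tilde{m}} \to \bbR$ that is negative on $V$, positive on $S^{\tilde{m}} \setminus \bar{V}$, vanishes on $\partial V$, and has $\nabla \chi \neq 0$ everywhere on $\partial V$. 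A standard choice is $\chi(\theta) = \rho(\operatorname{dist}(\theta, Z')^2)$ for a suitable monotonically increasing smooth $\rho : [0,\infty) \to \bbR$ whose zero lies inside the region where $\operatorname{dist}(\cdot, Z')$ has no critical points.

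Define $f(x) := g(|x|^2)\,\chi(x/|x|)$ for $x \neq 0$ and $f(0) := 0$, where $g(t) = t\,e^{-1/t}$ (with $g(0) = 0$) is a smooth, flat-at-origin multiplier. Because $g$ and all of its derivatives vanish at $0$, $f$ extends to a $C^\infty$ function on $\bbR^{\tilde{m}+1}$ despite $\chi(x/|x|)$ being direction-dependent at the origin. A computation in polar coordinates shows that the radial component of $-\nabla f$ has the sign of $-\chi$ with a strictly positive prefactor, while the tangential component is proportional to $-\nabla_{S^{\tilde{m}}} \chi$; thus a critical point of $f$ off the origin would require $\chi = 0$ and $\nabla \chi = 0$ simultaneously, which is ruled out by construction. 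A Lyapunov argument using $\dot{f} = -|\nabla f|^2 \leq 0$ then shows that any complete trajectory contained in $D^{\tilde{m}+1}$ must have both $\omega$- and $\alpha$-limits in $\crit(f) = \{0\}$, forcing $f \circ \gamma \equiv 0$ and hence $\gamma \equiv 0$. Consequently $\{0\}$ is isolated as an invariant set and $D^{\tilde{m}+1}$ is an isolating neighborhood.

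Take the index pair $(N, N_-) = (D^{\tilde{m}+1}, \bar{V})$ and check the three axioms. At a point of $\bar{V} \subset \partial D^{\tilde{m}+1}$ the trajectory exits $D^{\tilde{m}+1}$ either immediately (where $\chi < 0$) or at second order (where $\chi = 0$, using $\nabla \chi \neq 0$ there), so positive invariance holds vacuously; conversely, any trajectory leaving $D^{\tilde{m}+1}$ must cross $\partial D^{\tilde{m}+1}$ at a point with $\dot{r} \geq 0$, hence $\chi \leq 0$, i.e., through $\bar{V}$. Therefore $\calC(\{0\}) \simeq D^{\tilde{m}+1}/\bar{V}$. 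Since $\bar{V}$ deformation retracts onto $Z'$ and the cofibration $\bar{V} \hookrightarrow D^{\tilde{m}+1}$ sits inside a contractible space, the cofiber sequence yields $D^{\tilde{m}+1}/\bar{V} \simeq \Sigma \bar{V} \simeq \Sigma Z' \simeq \Sigma Z$, completing the argument.

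The main obstacle is harmonizing smoothness of $f$ at the origin with the angular sign-function structure of the flow. The naive candidate $\chi(x/|x|)\,|x|^2/2$ has a direction-dependent Hessian at $0$ and hence is only $C^1$ there; the flat multiplier $g(t) = t\,e^{-1/t}$ resolves this by forcing all derivatives of $f$ at the origin to vanish while preserving the radial/tangential sign structure needed for the index pair computation. Once this smoothing is in place, the critical point count, the Lyapunov step, and the cofiber-sequence identification are essentially formal.
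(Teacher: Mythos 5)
The paper does not actually reproduce a proof of this statement; it cites it to Pears (Theorem~4.9 of~\cite{Pea94}) immediately after the aside that a finite CW-complex $Z$ may be replaced by a homotopy-equivalent smooth closed submanifold $Z'\hookrightarrow S^{\tilde m}$. Your construction---take $f(x)=g(|x|^2)\,\chi(x/|x|)$ with a flat radial multiplier $g$ and an angular ``sign function'' $\chi$ cutting out a tubular neighborhood $\bar V$ of $Z'$ in $S^{\tilde m}$, so that $(D^{\tilde m+1},\bar V)$ is an index pair and $D^{\tilde m+1}/\bar V\simeq\Sigma\bar V\simeq\Sigma Z$---is correct and is, to my knowledge, the same style of argument as Pears'. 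The technical points you flag (smoothing at the origin via a flat multiplier; the nonvanishing of $\nabla_{S^{\tilde m}}\chi$ on $\partial V$ to rule out critical points off the origin and to guarantee second-order exit) are exactly the ones that need care, and your treatment of them is sound.

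Two small points you leave implicit but which are worth spelling out. First, for positive invariance of $N_-$ you assert that the hypothesis never triggers; the reason is that for $x\in\bar V$ one has $\chi(\theta(s))$ nonincreasing along the tangential flow and hence $\dot r\geq0$ for all $s\geq0$, so once the trajectory leaves $D^{\tilde m+1}$ (which it does immediately, by the first- or second-order argument) it can never re-enter, making the condition genuinely vacuous; without the monotonicity remark, one could worry about re-entry. Second, you should note that axiom (1) of the index pair definition holds because $\bar V\subset\partial D^{\tilde m+1}$ has empty interior in $D^{\tilde m+1}$, so $\overline{D^{\tilde m+1}\setminus\bar V}=D^{\tilde m+1}$, which you have already shown to be an isolating neighborhood. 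Neither is a gap, just an omission. Finally, the asserted ``standard choice'' $\chi(\theta)=\rho(\operatorname{dist}(\theta,Z')^2)$ is only smooth inside the tubular neighborhood; a patching (e.g.\ capping $\chi$ off by a positive constant outside) is needed, but this is routine and you clearly intend it.
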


\begin{rem}
As mentioned before, it is known there exists co-H-spaces that are not suspensions. It is an open problem whether such a space arises as $\calC\big(\{0\}\big)$ for a smooth function on $\bbR^{\widetilde{m}+1}$ with an isolated critical point at the origin, cf. \cite[Chapter 7]{CLOT03} for a discussion.
\end{rem}

We may now deduce some elementary properties of the (co)homology of $\calC\big(\{x\}\big)$. As foreshadowed by the verbiage of our previous statements, we will need to separate the minimum case from our discussion. And by the $S$-duality result (Theorem \ref{thm:sduality}), we will need to separate the maximum case from our discussion. Since our main application will be in $\bbR^n$, we will now assume that $X$ is orientable. 

\begin{prop}\label{prop:support}
Let $R$ be a commutative ring.
\begin{enumerate}
\item Suppose $x$ is not a minimum or maximum, then $\widetilde{H}_*\big(\calC\big(\{x\}\big);R\big)$ is supported in at most degrees $\{1,\ldots,n-1\}$.
\item Suppose $x$ is a minimum resp. maximum, then $\widetilde{H}_*\big(\calC\big(\{x\}\big);R\big)$ is supported in degree 0 resp. $n$. 
\end{enumerate}
\end{prop}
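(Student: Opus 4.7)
The plan is to handle the three cases---$x$ a minimum, $x$ a maximum, and $x$ neither---separately. For $x$ a minimum, I will use the standard index pair $(N,N_-)$ with $N$ a small closed $n$-ball about $x$: since no gradient trajectory exits $N$, the exit set is empty and $\calC(\{x\})\simeq N_+\simeq S^0$, whose reduced $R$-homology is concentrated in degree $0$. For $x$ a maximum the symmetric argument applies: every trajectory exits $N$, so $N_-=\partial N\cong S^{n-1}$ and $\calC(\{x\})\simeq N/\partial N\simeq S^n$, with reduced homology concentrated in degree $n$. (Alternatively, the maximum case can be deduced from the minimum case for $-f$ via the S-duality step below.)

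For the generic case (neither minimum nor maximum) three vanishings are required. Vanishing above degree $n$ is immediate from Lemma \ref{lem:finitecwcomplex}, which bounds $\dim\calC(\{x\})\leq n$. Vanishing in degree $0$ follows from Lemma \ref{lem:pathconnected} (applicable since $x$ is not a minimum), which makes $\calC(\{x\})$ path-connected and kills reduced $H_0$. The nontrivial step is degree $n$. Here I will invoke Corollary \ref{thm:sduality} with $f$ replaced by $-f$: the orientability of $X$ trivializes $o(\zeta)$, and since $(N,N_+)$ is a good pair (via the index triple guaranteed by Theorem \ref{thm:rs}), one has $H_*(N,N_+;R)\cong\tilde{H}_*(N/N_+;R)=\tilde{H}_*(\calC_f;R)$. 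Combined with the corollary this produces
\[
\tilde{H}^m(\calC_{-f};R)\cong\tilde{H}_{n-m}(\calC_f;R).
\]
Setting $m=0$ yields $\tilde{H}_n(\calC_f;R)\cong\tilde{H}^0(\calC_{-f};R)$. Because $x$ is not a maximum of $f$, it is not a minimum of $-f$, so Lemma \ref{lem:pathconnected} applied to $-f$ gives that $\calC_{-f}(\{x\})$ is path-connected, forcing $\tilde{H}^0(\calC_{-f};R)=0$.

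The main obstacle is mostly bookkeeping around the S-duality step: one has to apply the duality with $-f$ in place of $f$, confirm triviality of the local coefficient system $o(\zeta)$ in our orientable setting, and identify the relative homology $H_*(N,N_+;R)$ with the reduced homology of $\calC_f$. The remaining vanishings are direct consequences of results already established earlier in the paper.
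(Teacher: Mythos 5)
Your proof is correct and follows essentially the same route as the paper: use Lemma \ref{lem:finitecwcomplex} for the upper bound, Lemma \ref{lem:pathconnected} for degree $0$, and duality to reduce degree $n$ to degree $0$ for $-f$. The only cosmetic difference is that for the maximum case you compute $\calC(\{x\})\simeq D^n/\partial D^n\simeq S^n$ directly from the standard index pair, whereas the paper deduces it from the minimum case for $-f$ via Theorem \ref{thm:mccord}; and you unpack Corollary \ref{thm:sduality} to rederive Theorem \ref{thm:mccord}, which the paper simply cites.
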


\begin{proof}
First we deal with case (2). If $x$ is a minimum of $f$, then $\calC\big(\{x\}\big)\simeq S^0$ and the result holds. If $x$ is a maximum of $f$, then $x$ is a minimum of $-f$. We apply Theorem \ref{thm:mccord} to see
    \begin{equation}
    \widetilde{H}_*\big(\calC\big(\{x\}\big);R\big)\cong\widetilde{H}^{n-*}(S^0;R),
    \end{equation}
and the result holds.

We now deal with case (1). Lemma \ref{lem:finitecwcomplex} shows $\widetilde{H}_*\big(\calC\big(\{x\}\big);R\big)$ is supported in at most degrees $\{0,\ldots,n\}$. Since $x$ is not a minimum of $f$, Lemma \ref{lem:pathconnected} shows $\widetilde{H}_0\big(\calC\big(\{x\}\big);R\big)\cong0$. Moreover, $x$ is not a minimum of $-f$. We apply Theorem \ref{thm:mccord} to see
    \begin{equation}
    \widetilde{H}_n\big(\calC\big(\{x\}\big);R\big)\cong\widetilde{H}^0\big(\calC\big(-\{x\}\big);R\big)\cong0,
    \end{equation}
and the result holds.
\end{proof}

\begin{prop}[Theorem 2 \cite{Dan87}]
$\widetilde{H}_*\big(\calC\big(\{x\}\big);\bbZ\big)$ is torsion-free in degrees 
    \begin{equation}
    \{\ldots,0,1,n-2,n-1,n,\cdots\};
    \end{equation}
in particular, it is free-abelian in the aforementioned degrees.
\end{prop}

\begin{rem}
Of course, $\widetilde{H}_*\big(\calC\big(\{x\}\big);\bbZ\big)$ can only been non-zero in at most degrees $\{0,\ldots,n\}$, so the real content of the previous proposition is torsion-freeness in degrees $\{0,1,n-2,n-1,n\}$.
\end{rem}

\subsection{Steenrod squares}\label{subsec:steenrodsquares}
Recall, given any space $Y$, the Steenrod squares are stable cohomology operations, 
    \begin{equation}
    \big\{\sq^j:H^m(Y;\bbZ/2)\to H^{m+j}(Y;\bbZ/2)\big\}_{j\in\bbZ_{\geq0},m\in\bbZ},
    \end{equation}
axiomatically characterized by a well-known collection of properties. Moreover, the Steenrod squares satisfy the Adems relations:
    \begin{equation}
    \sq^{j_1}\sq^{j_2}=\sum_{\ell=0}^{\lfloor j_1/2\rfloor}\sq^{j_1+j_2-\ell}\sq^\ell,\;\; j_1<2j_2.
    \end{equation}
By the Yoneda lemma, the Steenrod squares are equivalent to a collection of morphisms 
    \begin{equation}
    \big\{\sq^j:K(\bbZ/2,m)\to K(\bbZ/2,m+j)\big\}_{j\in\bbZ_{\geq0},m\in\bbZ},
    \end{equation}
satisfying suitable properties, where $K(\bbZ/2,m)$ is a $\bbZ/2$-Eilenberg-Maclane space. Hence, we may define Steenrod squares on any spectrum $\frakY$ via post-composition:
    \begin{equation}
    \sq^j:(H\bbF_2)^m(\frakY)\equiv\ho\spectra(\Sigma^{-m}\frakY,H\bbF_2)\to(H\bbF_2)^{m+j}(\frakY);
    \end{equation}
this reduces to the usual notion of Steenrod squares on spaces when $\frakY$ is a suspension spectrum.

Suppose $\frakY$ is strongly dualizable, then we wish to relate the Steenrod squares on $\frakY$ and $\calD\frakY$. The following is essentially presented in \cite{Mau68}, albeit here we present it from a more modern viewpoint. Also, we should note that the original relation regarding Steenrod squares below is due to Thom \cite{Tho52}, whereas Maunder generalized Thom's results to other stable cohomology operations arising from other cohomology theories. 

Recall, since $\frakY$ is strongly dualizable, we have the isomorphism
    \begin{equation}
    \ho\spectra(\frakZ\wedge\frakY,\frakW)\cong\ho\spectra(\frakZ,\frakW\wedge\calD\frakY);
    \end{equation}
this is natural in both $\frakZ$ and $\frakW$. In particular, we have the following relations: 
    \begin{equation}
    (H\bbF_2)^*\frakY\cong(H\bbF_2)_{-*}\calD\frakY\;\;\textrm{and}\;\;(H\bbF_2)_*\frakY\cong(H\bbF_2)^{-*}\calD\frakY. 
    \end{equation}
By naturality, we have a diagram which defines a stable homology operation $\overline{\sq}^j$:
    \begin{equation}
    \begin{tikzcd}
    \ho\spectra(\Sigma^{-m}\bbS\wedge\frakY,H\bbF_2)\arrow[r,"\sim"]\arrow[d,"\sq^j"] & \ho\spectra(\Sigma^{-m}\bbS,H\bbF_2\wedge\calD\frakY)\arrow[d,"\overline{\sq}^j"] \\
    \ho\spectra(\Sigma^{-m-j}\bbS\wedge\frakY,H\bbF_2)\arrow[r,"\sim"] & \ho\spectra(\Sigma^{-m-j}\bbS,H\bbF_2\wedge\calD\frakY)
    \end{tikzcd}
    ,
    \end{equation}
where we recall
    \begin{align}
    (H\bbF_2)^m\frakY&\equiv\ho\spectra(\Sigma^{-m}\bbS\wedge\frakY,H\bbF_2), \\
    (H\bbF_2)_{-m}\calD\frakY&\equiv\ho\spectra(\Sigma^{-m}\bbS,H\bbF_2\wedge\calD\frakY).
    \end{align}
We will refer to the collection $\{\overline{\sq}^j\}_{j\in\bbZ_{\geq0}}$ as the \emph{dual Steenrod squares}. Using the universal coefficient theorem, the dual Steenrod squares determine the \emph{conjugate Steenrod squares} $c(\sq^j)$:
    \begin{equation}
    c(\sq^j):(H\bbF_2)^{-m-j}\calD\frakY\to(H\bbF_2)^{-m}\calD\frakY.
    \end{equation}
Thom, and more generally Maunder, showed the conjugate Steenrod squares are related to the usual Steenrod squares by the equation 
    \begin{equation}\label{eq:steenrod}
    \sum_{\ell=0}^j c(\sq^\ell)\sq^{j-\ell}=0,\;\;j>0.
    \end{equation}

We return to gradient flow. Again, the setup is as follows: let $X$ be a (not necessarily closed) smooth $n$-manifold, $f\in C^\infty(X)$ a smooth function, and $g$ a Riemannian metric on $X$. We will not necessarily assume $(f,g)$ is Morse-Smale, however, we will assume there exists $x\in\crit(f)$ such that $x$ is possibly degenerate and isolated. Moreover, $X$ is assumed to be oriented.

Part (2) of Proposition \ref{prop:support} has the following straightforward consequence.

\begin{lem}
Suppose $x$ is a minimum or maximum, then 
    \begin{equation}
    \sq^j:H^*\big(\calC\big(\{x\}\big);\bbZ/2\big)\to H^{*+j}\big(\calC\big(\{x\}\big);\bbZ/2\big)
    \end{equation}
vanishes for $j>0$.
\end{lem}

Hence, for the rest of this subsection, we will assume $x$ is not a minimum or maximum. In particular, $\calC\big(\{x\}\big)$ resp. $\calC\big(-\{x\}\big)$ is a co-H-space by Corollary \ref{cor:cohspace}. 

\begin{lem}\label{lem:steenrod}
We have that 
    \begin{equation}
    \sq^j:H^j\big(\calC\big(\{x\}\big);\bbZ/2\big)\to H^{2j}\big(\calC\big(\{x\}\big);\bbZ/2\big)
    \end{equation}
vanishes for $j>0$.
\end{lem}

\begin{proof}
The result is a direct consequence of $\calC\big(\{x\}\big)$ being a co-H-space. Explicitly, let $Y$ be any co-H-space. By Theorem \ref{thm:ganea}, we have a retraction $\Sigma\Omega Y\to Y$; hence, we have a surjection 
    \begin{equation}
    H^*(\Sigma\Omega Y;\bbZ/2)\twoheadrightarrow H^*(Y;\bbZ/2).
    \end{equation}
The result follows by a commutative diagram:
    \begin{equation}
    \begin{tikzcd}
    H^{j-1}(\Omega Y;\bbZ/2)\cong H^j(\Sigma\Omega Y;\bbZ/2)\arrow[r,two heads]\arrow[d,"\sq^j"] & H^j(Y;\bbZ/2)\arrow[d,"\sq^j"] \\ 
    H^{2j-1}(\Omega Y;\bbZ/2)\cong H^{2j}(\Sigma\Omega Y;\bbZ/2)\arrow[r,two heads] & H^{2j}(Y;\bbZ/2)
    \end{tikzcd}
    ,
    \end{equation}
since Steenrod squares commute with suspension and the left arrow vanishes for degree reasons.
\end{proof}

\begin{prop}\label{prop:steenrod}
We have that 
    \begin{equation}
    \sq^j:H^*\big(\calC\big(\{x\}\big);\bbZ/2\big)\to H^{*+j}\big(\calC\big(\{x\}\big);\bbZ/2\big)
    \end{equation}
vanishes for $j\geq(n-1)/2$ (with $j\neq0$).
\end{prop}

\begin{proof}
If we consider
    \begin{equation}
    \sq^j:H^m\big(\calC\big(\{x\}\big);\bbZ/2\big)\to H^{m+j}\big(\calC\big(\{x\}\big);\bbZ/2\big),
    \end{equation}
then, since Theorem \ref{thm:mccord} is induced by an S-duality result (i.e., Theorem \ref{thm:sduality}), we have the following commutative diagram:
    \begin{equation}
    \begin{tikzcd}
    H^m\big(\calC\big(\{x\}\big);\bbZ/2\big)\arrow[d,equals]\arrow[r,"\sq^j"] & H^{m+j}\big(\calC\big(\{x\}\big);\bbZ/2\big)\arrow[d,equals] \\
    H_{n-m}\big(\calC\big(-\{x\}\big);\bbZ/2\big)\arrow[r,"\overline{\sq}^j"] & H_{n-m-j}\big(\calC\big(-\{x\}\big);\bbZ/2\big)
    \end{tikzcd}
    .
    \end{equation}
In particular, $\sq^j$ vanishes if and only if $\overline{\sq}^j$ vanishes. Now, by the universal coefficient theorem, $\overline{\sq}^j$ vanishes if and only if
    \begin{equation}
    c(\sq^j):H^{n-m-j}\big(\calC\big(-\{x\}\big);\bbZ/2\big)\to H^{n-m}\big(\calC\big(-\{x\}\big);\bbZ/2\big)
    \end{equation}
vanishes. Recall, \eqref{eq:steenrod} shows 
    \begin{equation}
    c(\sq^j)=\sum_{\ell=0}^{j-1}c(\sq^\ell)\sq^{j-\ell},
    \end{equation}
thus $c(\sq^j)$ vanishes if 
    \begin{equation}
    \sq^{j-\ell}:H^{n-m-j}\big(\calC\big(-\{x\}\big);\bbZ/2\big)\to H^{n-m-\ell}\big(\calC\big(-\{x\}\big);\bbZ/2\big),\;\;0\leq\ell\leq j-1
    \end{equation}
vanishes. First, Lemma \ref{lem:steenrod} shows this happens if $1\geq n-m-j$. Thus we have the following relation: $1\geq n-m-j$ implies 
    \begin{equation}
    \sq^j:H^m\big(\calC\big(\{x\}\big);\bbZ/2\big)\to H^{m+j}\big(\calC\big(\{x\}\big);\bbZ/2\big)
    \end{equation}
vanishes. Second, Lemma \ref{lem:steenrod} shows we have the following relation: $j\geq m$ implies 
    \begin{equation}
    \sq^j:H^m\big(\calC\big(\{x\}\big);\bbZ/2\big)\to H^{m+j}\big(\calC\big(\{x\}\big);\bbZ/2\big)
    \end{equation}
vanishes. We now add together the two inequalities; the proposition follows.
\end{proof}

Unfortunately, the bound in the previous proposition is not optimal at all.

\begin{example}\label{example:steenrod1}
Consider the case $X$ is dimension $n=7$. Proposition \ref{prop:steenrod} predicts $\sq^j$ vanishes on $H^*\big(\calC\big(\{x\}\big);\bbZ/2\big)$ when $j\geq3$. We claim the bound can be improved to $j\geq2$, and we argue as follows. By part (1) of Proposition \ref{prop:support}, $\widetilde{H}^m\big(\calC\big(\{x\}\big);\bbZ/2\big)$ is supported in at most degrees $m\in\{1,\ldots,6\}$, hence $\sq^2$ can only possibly be non-zero on $H^m\big(\calC\big(\{x\}\big);\bbZ/2\big)$ when $m\in\{2,3,4\}$. 
\begin{itemize}
\item $m=2$: $\sq^2$ vanishes on $H^2\big(\calC\big(\{x\}\big);\bbZ/2\big)$ by Lemma \ref{lem:steenrod}. 

\item $m=4$: $\sq^2$ on $H^4\big(\calC\big(\{x\}\big);\bbZ/2\big)$ corresponds to
    \begin{equation}
    c(\sq^2):H^1\big(\calC\big(-\{x\}\big);\bbZ/2\big)\to H^3\big(\calC\big(-\{x\}\big);\bbZ/2\big).
    \end{equation}
The relation \eqref{eq:steenrod} shows 
    \begin{equation}
    c(\sq^2)=\sq^2+c(\sq^1)\sq^1=\sq^2+\sq^1\sq^1=\sq^2,
    \end{equation}
where $\sq^1\sq^1=0$ by the Adem relations. In particular, $\sq^2$ vanishes on $H^1\big(\calC\big(-\{x\}\big);\bbZ/2\big)$ by elementary algebraic topology.  

\item $m=3$: $\sq^2$ on $H^3\big(\calC\big(\{x\}\big);\bbZ/2\big)$ corresponds to
    \begin{equation}
    c(\sq^2):H^2\big(\calC\big(-\{x\}\big);\bbZ/2)\to H^4(\calC\big(-\{x\}\big);\bbZ/2\big).
    \end{equation}
As above, $c(\sq^2)=\sq^2$. In particular, $\sq^2$ vanishes on $H^2(\calC\big(-\{x\}\big);\bbZ/2)$ by Lemma \ref{lem:steenrod}. 
\end{itemize} 
\end{example}

The discrepancy just described lies in how we deduced when the conjugate Steenrod squares vanish in the proof of Proposition \ref{prop:steenrod}. By recursively using \eqref{eq:steenrod} and the Adem relations, we can possibly deduce the vanishing of more Steenrod squares as in Example \ref{example:steenrod1}. In fact, even in a case where the bound in Proposition \ref{prop:steenrod} is, in general, optimal, we may still be able to detect some Steenrod squares that must vanish.

\begin{example}
Consider the case $X$ is dimension $n=8$. Proposition \ref{prop:steenrod} predicts $\sq^j$ vanishes on $H^*\big(\calC\big(\{x\}\big);\bbZ/2\big)$ when $j\geq7/2$. We claim that some $\sq^3$'s and $\sq^2$'s must vanish, and we argue as follows. By part (1) of Proposition \ref{prop:support}, $\widetilde{H}^m\big(\calC\big(\{x\}\big);\bbZ/2\big)$ is supported in at most degrees $m\in\{1,\ldots,7\}$, hence $\sq^3$ can only possibly be non-zero on $H^m\big(\calC\big(\{x\}\big);\bbZ/2\big)$ when $m\in\{3,4\}$.
\begin{itemize}
\item $m=3$: $\sq^3$ vanishes on $H^3\big(\calC\big(\{x\}\big);\bbZ/2\big)$ by Lemma \ref{lem:steenrod}.
\end{itemize}
Also, $\sq^2$ can only possibly be non-zero on $H^m\big(\calC\big(\{x\}\big);\bbZ/2\big)$ when $m\in\{2,3,4,5\}$.
\begin{itemize}
\item $m=2$: $\sq^2$ vanishes on $H^2\big(\calC\big(\{x\}\big);\bbZ/2\big)$ by Lemma \ref{lem:steenrod}.
\item $m=4$: $\sq^2$ on $H^4\big(\calC\big(\{x\}\big);\bbZ/2\big)$ corresponds to
    \begin{equation}
    c(\sq^2):H^2\big(\calC\big(-\{x\}\big);\bbZ/2\big)\to H^4\big(\calC\big(-\{x\}\big);\bbZ/2\big).
    \end{equation}
Since $c(\sq^2)=\sq^2$, this must vanish by Lemma \ref{lem:steenrod}.      
\item $m=5$: $\sq^2$ on $H^5\big(\calC\big(\{x\}\big);\bbZ/2\big)$ corresponds to
    \begin{equation}
    c(\sq^2):H^1\big(\calC\big(-\{x\}\big);\bbZ/2\big)\to H^3\big(\calC\big(-\{x\}\big);\bbZ/2\big).
    \end{equation}
Since $c(\sq^2)=\sq^2$, this must vanish by elementary algebraic topology.
\end{itemize}
\end{example}

\subsection{An example}\label{subsec:example}
We will give two examples of the following: a compact pair $(N,N_-)$, $N_-\subset N$, such that no suspension of $N/N_-$ can be the Conley index of an isolated critical point of a smooth function on a smooth 7-manifold.

First, consider the complex projective plane $\bbC P^2$ and recall
    \begin{equation}
    H^*(\bbC P^2;\bbZ/2)\cong\bbZ/2[\kappa]/(\kappa^3),\;\;\deg\kappa=2. 
    \end{equation}
Since
    \begin{equation}
    \sq^2:H^2(\bbC P^2;\bbZ/2)\to H^4(\bbC P^2;\bbZ/2)
    \end{equation}
does not vanish, the pair $(N,\emptyset)$ is one such example by Example \ref{example:steenrod1}. 

Second, \cite[Theorem A]{Fuq94} shows there is a smooth embedding $\iota:\bbC P^2\hookrightarrow\bbR^7$. We will denote by $\zeta$ the normal bundle of $\iota(\bbC P^2)$. Let $N$ be a tubular neighborhood of $\iota(\bbC P^2)$ with boundary $N_-\equiv\partial N$; we have an equivalence 
    \begin{equation}
    \th(\zeta)\simeq N/N_-. 
    \end{equation}
The Thom isomorphism theorem shows
    \begin{equation}
    \widetilde{H}^m(\th(\zeta);\bbZ/2)\cong\begin{cases}
    \bbZ/2 & m=3,5,7 \\
    0 & \textrm{otherwise}
    \end{cases}
    .
    \end{equation}
Let $u\in H^3(\th(\zeta);\bbZ/2)$ be a Thom class. The $i$-th Stiefel-Whitney class $w_i(\zeta)$ of $\zeta$ is defined by
    \begin{equation}
    w_i(\zeta)\equiv\Phi^{-1}\big(\sq^i(u)\big),
    \end{equation}
where $\Phi$ is the Thom isomorphism given by cupping with $u$. We may compute the Stiefel-Whitney classes of $\zeta$ via the relation
    \begin{equation}
    w(T\bbC P^2)w(\zeta)=1\in H^*(\bbC P^2;\bbZ/2)
    \end{equation}
and the fact that $w(T\bbC P^2)=(1+\kappa)^3$. Here, $w(T\bbC P^2)$ resp. $w(\zeta)$ is the total Stiefel-Whitney class of $T\bbC P^2$ resp. $\zeta$. In particular, we see that $w_2(\zeta)\neq0$, hence 
    \begin{equation}
    \sq^2:H^3(\th(\zeta);\bbZ/2)\to H^5(\th(\zeta);\bbZ/2)
    \end{equation}
does not vanish. The pair $(N,N_-)$ is another such example by Example \ref{example:steenrod1}.

\section{Morse theory: homotopy}
\subsection{Abouzaid-Blumberg framework}
Abouzaid-Blumberg \cite{AB24} have begun rewriting the foundations of Floer homotopy theory and, in this subsection, we will review their framework. Since we will not require the full power of the theory, we will write the following statements in less generality.\footnote{Most notably, we will not work with stratified derived orbifolds with corners; we will simply work with stratified smooth manifolds with corners.} Also, the reader only familiar with the original Cohen-Jones-Segal framework (i.e., \cite{CJS95,CK23,Lar21}) may safely use their intuition from this framework to interpret the Floer homotopy theory performed in the present article.

In this subsection, let $X$ be a smooth manifold with corners.\footnote{By a smooth manifold with corners, we will always mean a ``$\langle k\rangle$-manifold'' (cf. \cite{Lau00}).} Associated to $X$ is a category $\calP_X$, defined as follows. The objects of $\calP_X$ are given by components $\partial^\sigma X$ of corner strata of $X$, and there is a morphism $\partial^\sigma X\to\partial^\tau X$ if $\overline{\partial^\sigma X}$ contains $\partial^\tau X$. Note, the components of the interior of $X$ are the minimal objects of $\calP_X$. Moreover, there is a natural functor 
    \begin{equation}
    \codim:\calP_X\to\bbZ_{\geq0}
    \end{equation}
which records the codimension, where we give $\bbZ_{\geq0}$ the natural categorical structure induced by its standard poset structure. 

In the general theory, coarser labellings than labellings of components of boundary strata are sometimes necessary, so the following notion was introduced. Let $\calP$ be any category equipped with a functor $\codim:\calP\to\bbZ_{\geq0}$. For any object $p\in\calP$, we denote by $\calP^p$ the overcategory associated to $p$ (recall, this is the category whose objects are arrows $q\to p$ and whose morphisms are commuting triangles).

\begin{defin}[Definition 2.1 \cite{AB24}]
We say $\calP$ is a \emph{model for manifolds with corners} if, for each $p\in\calP$, $\calP^p$ is isomorphic to the poset $2^{\{1,\ldots,\codim p\}}$.\footnote{Here, we use the partial order induced by increasing inclusion. Note, the empty set is the initial object of $2^{\{1,\ldots,\codim p\}}$.}  
\end{defin}

We will refer to models of manifolds with corners simply as models. A morphism of models $\calP_0\to\calP_1$ is an embedding that (1) preserves the codimension up to an overall shift and (2) induces an isomorphism on the factorizations of a morphism in $\calP_0$ with the factorizations of its image in $\calP_1$. 

\begin{rem}
In the present article, we will only use models which are posets. In particular, we may bypass most of the technicalities that come with general models. In fact, Abouzaid-Blumberg only use models which are posets -- however, more general models will be used in their future work, cf. \cite[Remark 2.4]{AB24}.
\end{rem}

\begin{defin}[Definition 2.6 \cite{AB24}]
A \emph{stratified smooth manifold with corners} is a pair 
    \begin{equation}
    (X,F:\calP_X\to\calP),
    \end{equation}
where $X$ is a smooth manifold with corners and $F:\calP_X\to\calP$ is a functor from $\calP_X$ to a model $\calP$ such that $F$ (1) preserves the codimension and (2) induces isomorphisms $\calP_X^\sigma\xrightarrow{\sim}\calP^{F\sigma}$ for all objects $\sigma\in\calP_X$.
\end{defin}

\begin{defin}[Definition 2.9 \cite{AB24}]
A \emph{morphism of stratified smooth manifolds with corners}
    \begin{equation}
    (X,\calP_X\to\calP)\to(X',\calP_{X'}\to\calP')
    \end{equation}
consists of a smooth map of smooth manifolds with corners $X\to X'$ together with a functor $\calP\to\calP'$ such that the following induced diagram commutes:
    \begin{equation}
    \begin{tikzcd}
    \calP_X\arrow[r]\arrow[d] & \calP_{X'}\arrow[d] \\
    \calP\arrow[r] & \calP'
    \end{tikzcd}
    .
    \end{equation}
\end{defin}

Now, let $\calP$ be any set. For any two $x,y\in\calP$, we may define a model $\calP(x,y)$ as follows (cf. \cite[Definition 3.1]{AB24}). The objects are trees $T$ with only bivalent vertices whose edges are labeled by elements of $\calP$ such that the incoming leaf is labeled by $x$ resp. the outgoing leaf is labeled by $y$, there is a morphism $T_0\to T_1$ if $T_0$ is obtained from $T_1$ by collapsing internal edges, and the functor $\codim:\calP(x,y)\to\bbZ_{\geq0}$ is defined by counting the number of internal edges. There is a natural functor 
    \begin{equation}
    \calP(x,z)\times\calP(z,y)\to\calP(x,y)
    \end{equation}
given by concatenation; hence, the collection $\big\{\calP(x,y)\big\}_{x,y\in\calP}$ assembles into a strict 2-category whose objects are elements of $\calP$ and morphism categories are the categories $\calP(x,y)$ (cf. \cite[Definition 3.3]{AB24}). 

\begin{defin}[Definition 3.4 \cite{AB24}]\label{defin:unstructuredflowcategory}
An \emph{(unstructured\footnote{More generally, a \emph{structured flow category} refers to an unstructured flow category together with ``extra structure'' coming from a choice of bordism theory. In the present article, we will only work with framed flow categories, i.e., those coming with extra structure from framed bordism.}) flow category} $\bbX$ consists of the following data: 
\begin{enumerate}
\item an object set $\calP$, 
\item for every two $x,y\in\calP$, a compact smooth manifold with corners $\bbX(x,y)$ stratified by $\calP(x,y)$,
\item and, for every three $x,y,z\in\calP$, a morphism of stratified smooth manifolds with corners
    \begin{equation}
    \bbX(x,z)\times\bbX(z,y)\to\bbX(x,y),
    \end{equation}
given by an inclusion of a codimension 1 boundary stratum,\footnote{This is the interpretation, in our context, of the condition that the enriching category for unstructured flow categories used in \cite {AB24} (given by stratified derived orbifolds with corners) only has morphisms which are modeled on strong equivalences, cf. \cite[Definition 2.18]{AB24}.} which lifts the functor 
    \begin{equation}
    \calP(x,z)\times\calP(z,y)\to\calP(x,y)
    \end{equation}
such that, for every four $x,y,z_1,z_2\in\calP$, the diagram 
    \begin{equation}
    \begin{tikzcd}\label{eq:unstructureddiagram}
    \bbX(x,z_1)\times\bbX(z_1,z_2)\times\bbX(z_2,y)\arrow[r]\arrow[d] & \bbX(x,z_1)\times\bbX(z_1,y)\arrow[d] \\
    \bbX(x,z_2)\times\bbX(z_2,y)\arrow[r] & \bbX(x,y)
    \end{tikzcd}
    \end{equation}
commutes. Moreover, we require the codimension 1 boundary strata of $\bbX(x,y)$ are enumerated by the aforementioned morphisms.
\end{enumerate}
\end{defin}

\begin{rem}
By part (3) of the previous definition, it can only be the case that, for any two $x,y\in\calP$, either $\bbX(x,y)$ or $\bbX(y,x)$ is non-empty. In particular, $\calP$ has a natural partial ordering. 
\end{rem}

A simple example of an unstructured flow category is the \emph{unit flow category} $\unit$; this consists of a single object and no morphisms.

\begin{defin}[Definition 3.7 \cite{AB24}]\label{defin:framedflowcategory}
A \emph{framed flow category} is a flow category $\bbX$ together with the following additional data:
\begin{enumerate}
\item a real virtual vector space $V_x$ for each $x\in\calP$,
\item a real vector bundle $W(x,y)\to\bbX(x,y)$ for each two $x,y\in\calP$ satisfying 
    \begin{equation}
    W(x,z)\oplus W(z,y)\cong W(x,y),\;\;\forall x,y,z\in\calP,
    \end{equation}

\item a trivial real virtual bundle $I(x,y)\to\bbX(x,y)$ for each two $x,y\in\calP$ satisfying 
    \begin{equation}
    I(x,z)\oplus I(z,y)\cong I(x,y),\;\;\forall x,y,z\in\calP,
    \end{equation}
\item and a choice of equivalence of real virtual bundles
    \begin{equation}\label{eq:frameddiagram}
    T\bbX(x,y)\oplus\underline{V}_y\oplus\underline{\bbR}\cong I(x,y)\oplus W(x,y)\oplus\underline{V}_x
    \end{equation}
over $\bbX(x,y)$, for each two $x,y\in\calP$, such that the natural associativity diagram commutes.
\end{enumerate}
\end{defin}

\begin{rem}
In the present article, we will only work with framed flow categories such that, for any two $x,y\in\calP$, $W(x,y)$ and $I(x,y)$ are trivial of (virtual) rank 0.
\end{rem}

A simple example of a framed flow category is $\Sigma^d\unit$, $d\in\bbZ$; this has underlying unstructured flow category $\unit$ together with the framing consisting of a real virtual vector space $V$ of virtual rank $d$ over the single object.

\cite[Theorem 1.6]{AB24} shows there exists a stable $\infty$-category $\flow^\fr$ whose objects are framed flow categories. More generally, the $d$-simplices in $\flow^\fr$ are given by \emph{framed flow $d$-simplices} (cf. \cite[Section 4]{AB24}). We will not reiterate the definition in the present article since it would be a large digression. The idea of framed flow simplices is to categorify the familiar notions of: continuation maps, homotopies of continuation maps, and so on. Moreover, \cite[Proposition 1.10]{AB24} shows there is an equivalence of stable $\infty$-categories between $\flow^\fr$ and $\spectra$.

\subsection{Morse homotopy type}
In this subsection, we will prove a generalization of the classical result of Cohen-Jones-Segal. Recall, in \cite{CJS95} it was shown that the Morse homotopy type of a closed smooth manifold is equivalent to the suspension spectrum of that manifold; we will generalize this to Conley index theory.

Again, our setup is as follows. Let $X$ be a (not necessarily closed) smooth $n$-manifold, $f\in C^\infty(X)$ a smooth function, and $g$ a Riemannian metric on $X$. We may assume $(f,g)$ is Morse-Smale; moreover, we will assume the maximal invariant subset of $f$ is isolated. We denote by $\bbM^{f,g}_X$ the flow category with object set $\crit(f)$ and morphism spaces $\bbM^{f,g}_X(x,y)$ (the composition is the standard gluing map of broken gradient flows).

\begin{prop}\label{prop:morsehomotopytype}
There exists a lift of $\bbM^{f,g}_X$ to a framed flow category together with the following homotopy equivalence:
    \begin{equation}\label{eq:morseequivalence}
    \frakM^{f,g}_X\equiv\flow^\fr\Big(\unit,\bbM^{f,g}_X\Big)\simeq\Sigma^\infty\calC_f.
    \end{equation}
\end{prop}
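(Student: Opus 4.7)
The plan is to adapt the classical Cohen--Jones--Segal identification of the Morse homotopy type of a closed manifold with its stable homotopy type to the present non-compact setting, with the role of a closed manifold now played by the quotient $N/N_-$ of a smooth index pair, which realizes $\calC_f$ by Theorem \ref{thm:rs}. After a $C^2$-small Morse-Smale perturbation supported in $N$ (which leaves $\calC_f$ unchanged by Theorem \ref{thm:continuation}), we may assume $(f,g)$ is Morse-Smale with all critical points lying in the interior of $N$, and all connecting gradient flows remain in $N$.

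For the framing, I would assign to each $x \in \crit(f)$ the virtual real vector space $V_x$ of virtual rank $I(x)$. The Fredholm linearization $D(\Xi_{f,g})_\gamma$ recalled in Subsection \ref{subsec:morsehomologyfunctional} is surjective of index $I(x) - I(y)$, and its kernel is canonically identified with $T_\gamma \hat{\calM}^{f,g}_X(x,y) \cong T_\gamma \bbM^{f,g}_X(x,y) \oplus \underline{\bbR}$, the extra factor being the infinitesimal time-translation. A family stabilization of this Fredholm family then provides the equivalence
\begin{equation}
T\bbM^{f,g}_X(x,y) \oplus \underline{\bbR} \oplus \underline{V}_y \cong \underline{V}_x
\end{equation}
required by Definition \ref{defin:framedflowcategory}, with $W(x,y)$ and $I(x,y)$ trivial of rank zero. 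Compatibility of this framing with the gluing morphisms --- i.e., commutativity of the diagram intertwining \eqref{eq:unstructureddiagram} and \eqref{eq:frameddiagram} on codimension-one boundary strata --- follows from the linear gluing theorem for Fredholm operators along broken gradient trajectories, which is standard in Morse theory.

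To identify $\flow^\fr(\unit, \bbM^{f,g}_X)$ with $\Sigma^\infty \calC_f$, I would use the handle-theoretic Morse presentation of the cobordism $N$: ordering the critical points by the value of $f$, each time $f$ crosses a critical value the sublevel set acquires a new cell of dimension equal to the Morse index of that critical point, attached along a map whose Pontryagin--Thom data consists of the framed zero-dimensional moduli spaces $\bbM^{f,g}_X(x,y)$. Quotienting by $N_-$ exhibits $N/N_- \simeq \calC_f$ with this same CW-structure. On the other hand, the mapping spectrum $\flow^\fr(\unit, \bbM^{f,g}_X)$ is, by the construction of \cite{AB24}, itself assembled as an iterated cofibered extension whose cells are precisely this Pontryagin--Thom data, with the higher simplicial coherence controlled by the boundary strata of the higher-dimensional $\bbM^{f,g}_X(x,y)$. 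Matching the two presentations cell-by-cell produces the desired equivalence.

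The principal obstacle is this last matching at the level of framed flow $d$-simplices in the sense of \cite[Section 4]{AB24}: one must exhibit a coherent system of continuation simplices realizing the equivalence, including all higher homotopies relating concatenations of broken trajectories. In the closed-manifold case this was carried out in the Cohen--Jones--Segal formalism by Large \cite{Lar21}, and one can translate that argument into the Abouzaid--Blumberg formalism as in \cite{PS24}. The Conley-index setting requires essentially no further work beyond this, since all critical points and all connecting trajectories live in the interior of the fixed isolating neighborhood, so the closed-case construction applies verbatim once $N$ is fixed.
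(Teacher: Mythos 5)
Your reduction to a fixed isolating neighborhood and perturbation to a Morse--Smale pair matches the paper's setup, but both halves of your argument diverge from the paper's and the first half has a real gap.

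On the \emph{framing}: you assert that the canonical identification $\ker D(\Xi_{f,g})_\gamma \cong T_\gamma\hat\calM^{f,g}_X(x,y)$ plus ``a family stabilization'' produces the required equivalence $T\bbM^{f,g}_X(x,y)\oplus\underline\bbR\oplus\underline V_y\cong\underline V_x$, but you never say what that stabilization is or why the choices involved are coherent across strata. Knowing each $D(\Xi_{f,g})_\gamma$ is surjective of index $I(x)-I(y)$ gives the existence of \emph{some} isomorphism fiberwise; it does not single out a trivialization of the index bundle, and it is exactly this that has to be done canonically so that the diagrams intertwining \eqref{eq:unstructureddiagram} and \eqref{eq:frameddiagram} commute. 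The paper fills this in with \emph{Morse abstract caps}: it assigns to each critical point $x$ an operator $T_{\frakM,x}$ with prescribed asymptotics, forms the glued operator $T^*_{\frakM,x}\#D(\Xi_{f,g})_\gamma\#T_{\frakM,y}$, and shows by Lemma \ref{lem:standardmorseoperator} that this glued operator can be deformed to an invertible one through a contractible space of operators; the resulting trivialization of $\ind D(\Xi_{f,g})_\gamma+\ind T_{\frakM,y}-\ind T_{\frakM,x}$ is the framing. You could instead use the direct framing via trivializations of $TW^u(x)$, $TW^s(y)$ (the paper explicitly mentions this alternative and cites \cite{AB21}); but whichever route you take, the device has to be named and its well-definedness argued. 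Also worth noting: the paper deliberately avoids the direct framing and uses abstract caps precisely so that the Morse framing is manifestly comparable to the Floer abstract-cap framing in Section 5, which is what makes Proposition \ref{prop:floermorsecorrespondence} go through; your framing would need a separate comparison argument there.

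On the \emph{equivalence}: your route --- the classical handle-theoretic cell-by-cell matching, inducting over critical values, with coherence deferred to a translation of Large's construction --- is genuinely different from what the paper does. The paper constructs an explicit morphism of simplicial sets $\sing\calC_f\to\flow^\fr_0(\unit,\bbM^{f,g}_X)$ by sending a singular $d$-simplex $\sigma$ to the framed flow $d$-simplex whose bimodule pieces are the fiber products $\overline{W^s(x)}\times_{\calC_f}\abs{\Delta^d}$, handles smoothness by perturbing $\abs{\Delta^d}$ inductively (Thom transversality), and then shows $\Phi_*$ is an isomorphism on homotopy groups by combining the Pontryagin--Thom identification $\Omega^\fr_m(\calC_f)\cong\pi_m(\Sigma^\infty\calC_f)$ with an induction over the action filtration. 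Your approach, if carried out, would buy you a presentation closer to classical Morse theory at the cost of having to manage attaching-map coherence by hand; the paper's fiber-product construction produces all the higher coherences at once without choosing a handle order. Your observation that the Conley-index setting adds essentially nothing once $N$ is fixed is correct and matches the paper's remark that the extension ``follows the exact same style of proof.''
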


This result has been proven, at least in the closed case, in various guises (cf. \cite[Appendix D]{AB21}, \cite[Example 1.7]{AB24}, \cite[Main Theorem]{Bon24}, \cite{CJS95}, \cite[Theorem 3.11]{CK23}); this generalization follows the exact same style of proof. Therefore, we will only sketch the proof since this is the first time it has appeared in the Abouzaid-Blumberg framework. Moreover, the homotopy equivalence portion of this proof does require a considerable amount more from the general theory in \cite{AB24}, and the reader may safely skip this part of the proof with no loss of understanding. 

\begin{rem}
In the remainder of the present article we will invoke standard results about operator gluing resp. the spectral flow,  cf. \cite{FH93} resp. \cite{RS95}. Note, the choices involved in operator gluing lie in contractible spaces. 
\end{rem}

\begin{rem}
The framing we are about to construct for the Morse homotopy type will use the notion of ``abstract caps'' coming from the functional-analytic framework of Morse theory. Note, this is not the ``standard'' way to frame the Morse flow category; there is a more direct way, cf. \cite[Appendix D]{AB21}. Fortunately, the abstract cap framing gives the same homotopical information as the direct framing since both recover the suspension spectrum of the maximal Conley index. However, the advantage to using abstract caps is that we may more easily relate the framings in Morse and Floer theory.
\end{rem}

In the remainder of the present article, we fix a sufficiently small positive constant $\lambda\in\bbR_{>0}$. We begin with a lemma. 

\begin{lem}\label{lem:standardmorseoperator}
Let $F\to\bbR$ be a real vector bundle with connection $\nabla$. The operator 
    \begin{align}
    S:W^{1,2}(\bbR;F)&\to L^2(\bbR;F) \\
    \xi(s)&\mapsto\nabla_s\xi(s)+\lambda\xi(s) \nonumber
    \end{align}
is invertible
\end{lem}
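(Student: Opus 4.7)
The plan is to reduce the statement to the simplest possible scalar model and then use either an explicit formula or Fourier analysis. Since any real vector bundle over the contractible base $\bbR$ is trivializable, I first choose a global trivialization of $F$ via parallel transport along $\bbR$. Under such a trivialization, the covariant derivative $\nabla_s$ becomes the ordinary derivative $\partial_s$ acting on $W^{1,2}(\bbR;\bbR^k)$, where $k=\operatorname{rk}F$. Thus it suffices to prove that
    \begin{equation}
    S_0:W^{1,2}(\bbR;\bbR^k)\to L^2(\bbR;\bbR^k),\quad \xi\mapsto\partial_s\xi+\lambda\xi,
    \end{equation}
is a bounded isomorphism.

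The most direct path is via the Fourier transform on $\bbR$. Under $\mathcal{F}$, $S_0$ becomes multiplication by the symbol $i\omega+\lambda$, and since $\lambda\neq0$ we have the pointwise bound $\lvert i\omega+\lambda\rvert^2=\omega^2+\lambda^2\geq\lambda^2>0$. Hence the multiplication operator by $(i\omega+\lambda)^{-1}$ is bounded on $L^2$, producing an $L^2$-bounded two-sided inverse. To upgrade this to the right functional setting, note that if $S_0\xi=\eta$ then $\partial_s\xi=\eta-\lambda\xi\in L^2$, so $\xi\in W^{1,2}$ with $\|\xi\|_{W^{1,2}}\leq C_\lambda\|\eta\|_{L^2}$ for some constant depending only on $\lambda$.

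An alternative, more in line with the spectral flow viewpoint flagged by the paper, is to realize $S_0$ as a translation-invariant Cauchy-Riemann type operator $\partial_s+A(s)$ with constant asymptotic operator $A(\pm\infty)=\lambda\cdot\identity$. Since $\lambda\neq0$, both asymptotic operators are invertible, so by standard results $S_0$ is Fredholm and the spectral flow is zero (the path is constant), hence $\ind S_0=0$. Homogeneous solutions to $\partial_s\xi+\lambda\xi=0$ have the form $\xi(s)=e^{-\lambda s}\xi(0)$, which fails to lie in $L^2(\bbR)$ unless $\xi(0)=0$; the same computation applied to the formal adjoint $-\partial_s+\lambda$ shows the cokernel is trivial. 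Combined with $\ind S_0=0$, this gives invertibility.

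I do not anticipate any real obstacle here: the result is essentially a textbook computation. The only mild subtlety is making sure the trivialization step is legitimate, but this is immediate because parallel transport along $\bbR$ gives a smooth isomorphism of vector bundles carrying $\nabla_s$ to $\partial_s$, and both $\|\cdot\|_{W^{1,2}}$ and $\|\cdot\|_{L^2}$ defined via $\nabla$ are equivalent to those defined in the trivialization (with equivalence constants depending only on $k$).
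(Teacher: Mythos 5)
Your second, spectral-flow approach is essentially the paper's proof: trivialize $F$ via parallel transport so that $\nabla_s$ becomes $\partial_s$, use spectral flow to deduce that the operator is Fredholm of index $0$, and observe that any kernel element solves $\partial_s\xi = -\lambda\xi$ and hence has components proportional to $e^{-\lambda s}$, which cannot lie in $W^{1,2}(\bbR)$ unless zero. Your first approach, via the Fourier transform, is a genuinely different and somewhat more elementary route: the symbol $i\omega + \lambda$ is bounded away from zero by $\abs{\lambda}$, so multiplication by $(i\omega+\lambda)^{-1}$ gives an explicit bounded inverse $L^2 \to L^2$, and the bootstrap $\partial_s\xi = \eta - \lambda\xi$ upgrades this to a bounded inverse $L^2 \to W^{1,2}$. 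The Fourier argument is self-contained and avoids any appeal to spectral flow or Fredholm theory, but it exploits translation invariance and therefore does not generalize to the asymptotically constant operators that appear later in the framing construction, which is presumably why the paper phrases the proof in spectral-flow language to begin with. Both routes are correct.
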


\begin{proof}
Observe, $\nabla$ induces a trivialization $F\cong\underline{\bbR}^{\widetilde{m}}$ together with an identification of $S$ with the operator
    \begin{align}
    S:W^{1,2}(\bbR;\underline{\bbR}^{\widetilde{m}})&\to L^2(\bbR;\underline{\bbR}^{\widetilde{m}}) \\
    \xi(s)&\mapsto\partial_s\xi(s)+\lambda\xi(s). \nonumber
    \end{align}
By examining the spectral flow, we see $S$ is a Fredholm operator of index 0. Suppose for contradiction that there is a non-zero element $\xi\in\ker S$, then $\xi$ is a non-zero solution of the ODE 
    \begin{equation}
    \partial_s\xi(s)=-\lambda\xi(s).
    \end{equation}
It follows that $\xi_j(s)=e^{-\lambda s}$, where $\xi=(\xi_1,\ldots,\xi_{\widetilde{m}})$. But this contradicts the assumption that $\xi$ is $W^{1,2}$, hence $\ker S=0$; the lemma follows. 
\end{proof}

Consider $\gamma\in\widehat{\calM}^{f,g}_X(x,y)$ and recall the associated operator $D(\Xi_{f,g})_\gamma$. We may also consider the invertible self-adjoint operator
    \begin{align}
    D(\Xi_{f,g})_x\in\operatorname{End}(T_xX)\;\;\textrm{resp.}\;\;D(\Xi_{f,g})_y\in\operatorname{End}(T_yY)
    \end{align}
given by linearizing $\Xi_{f,g}$ at $x$ resp. $y$. Note, we have that $D(\Xi_{f,g})_\gamma$ is asymptotic to $D(\Xi_{f,g})_x$ resp. $D(\Xi_{f,g})_y$ at $-\infty$ resp. $+\infty$. For the rest of this subsection, we work with the Levi-Civita connection $\nabla^g$. Observe, $\nabla^g$ induces a trivialization 
    \begin{equation}
    T_xX\cong\bbR^n,
    \end{equation}
which trivializes $\nabla^g$, and an identification of $D(\Xi_{f,g})_x$ with the invertible symmetric matrix 
    \begin{equation}
    \nabla^g\grad f(x)\in\operatorname{End}(\bbR^n).
    \end{equation}

\begin{defin}\label{defin:morseabstractcap}
A \emph{Morse abstract cap} for $x$ is an operator of the form
    \begin{align}
    T_{\frakM,x}:W^{1,2}(\bbR;\bbR^n)&\to L^2(\bbR;\bbR^n) \label{eq:form1} \\
    \xi(s)&\mapsto\partial_s\xi(s)+\calT_{\frakM,x}(s)\xi(s), \nonumber
    \end{align}
where $\calT_{\frakM,x}(s)\in\operatorname{End}(\bbR^n)$ are symmetric matrices that satisfy
    \begin{equation}\label{eqn:form2}
    \lim_{s\to-\infty}\calT_{\frakM,x}(s)=\nabla^g\grad f(x)\;\;\textrm{and}\;\;\lim_{s\to+\infty}\calT_{\frakM,x}(s)=\lambda\cdot\identity.
    \end{equation}
\end{defin}

\begin{rem}
Let $\calT^*_{\frakM,x}(s)\in\operatorname{End}(\bbR^n)$ satisfy
    \begin{equation}
    \calT^*_{\frakM,x}(s)\equiv\calT_{\frakM,x}(-s).
    \end{equation}
A straightforward operator gluing argument shows the operator
    \begin{align}
    T^*_{\frakM,x}:W^{1,2}(\bbR;\bbR^n)&\to L^2(\bbR;\bbR^n) \\
    \xi(s)&\mapsto\partial_s\xi(s)+\calT^*_{\frakM,x}(s)\xi(s) \nonumber 
    \end{align}
has index bundle satisfying 
    \begin{equation}\label{eqn:morseabstractcapequiv}
    \ind T^*_{\frakM,x}(s)\cong-\ind T_{\frakM,x}.\footnote{Recall, the index bundle of a family of Fredholm operators $E$ on a real Hilbert space (parameterized by a space $Y$) is the real virtual bundle defined fiberwise by
        \begin{equation}
        \ind E\vert_y\equiv\ker E\vert_y-\coker E\vert_y
        \end{equation}
    when the kernel/cokernel have locally constant rank (there is a straightforward extension to the case when the kernel/cokernel do not have locally constant rank, cf. \cite[Appendix A]{Ati67}).}
    \end{equation}
\end{rem}

By examining the spectral flow, we see any Morse abstract cap for $x$ is a Fredholm operator of index $I(x)$. Moreover, we see the space of Morse abstract caps for $x$ is contractible since the space of operators of the form \eqref{eq:form1} with fixed asymptotics is contractible (i.e., the space of families of symmetric matrices with asymptotics \eqref{eqn:form2} is contractible).

\begin{proof}[Proof sketch of Proposition \ref{prop:morsehomotopytype}]
First, we will lift $\bbM^{f,g}_X$ to a framed flow category. Consider the glued together Fredholm operator
    \begin{equation}
    T_{\frakM,x,y,\gamma}\equiv T^*_{\frakM,x}\#D(\Xi_{f,g})_\gamma\#T_{\frakM,y}.
    \end{equation}
Under standard operator gluing, the index bundle is additive: 
    \begin{equation}
    \ind T_{\frakM,x,y,\gamma}\cong \ind T^*_{\frakM,x}+\ind D(\Xi_{f,g})_\gamma+\ind T_{\frakM,y}.
    \end{equation}
    
We claim $T_{\frakM,x,y,\gamma}$ can be deformed to an invertible operator in a way independent of choices. We have that $T_{\frakM,x,y,\gamma}$ is of the form
    \begin{align}
    T_{\frakM,x,y,\gamma}:W^{1,2}(\bbR;\gamma^*TX)&\to L^2(\bbR;\gamma^*TX) \label{eq:form1'} \\
    \xi(s)&\mapsto\nabla^g_s\xi(s)+A(s)\xi(s),\nonumber
    \end{align}
where $A(s)\in\operatorname{End}(\gamma^*TX\vert_s)$ are symmetric matrices such that 
    \begin{equation}
    \lim_{s\to\pm\infty}A(s)=\lambda\cdot\identity.
    \end{equation}
Since the space of operators of the form \eqref{eq:form1'} with fixed asymptotics is contractible, we may identity $T_{\frakM,x,y,\gamma}$ with the operator 
    \begin{align}
    T_{\frakM,x,y,\gamma}:W^{1,2}(\bbR;\gamma^*TX)&\to L^2(\bbR;\gamma^*TX) \\
    \xi(s)&\mapsto\nabla^g_s\xi(s)+\lambda\xi(s);\nonumber
    \end{align}
the claim follows by Lemma \ref{lem:standardmorseoperator}.

Now, we have an equivalence of real virtual vector spaces:
    \begin{equation}\label{eq:discussion1}
    \ind T_{\frakM,x}\cong\ind D(\Xi_{f,g})_\gamma+\ind T_{\frakM,y}.
    \end{equation}
Since all choices live in contractible spaces, we can upgrade this equivalence to families of operators. In particular, consider the fiber bundle $\fred\to\widehat{\calM}^{f,g}_X(x,y)$, with fiber 
    \begin{equation}
    \fred\vert_\gamma\equiv\fred\big(W^{1,2}(\bbR;\gamma^*TX),L^2(\bbR;\gamma^*TX)\big),
    \end{equation}
together with the section 
    \begin{align}
    D\Xi_{f,g}:\widehat{\calM}^{f,g}_X(x,y)&\to\fred \\
    \gamma&\mapsto D(\Xi_{f,g})_\gamma.\nonumber
    \end{align}
By the discussion in Subsection \ref{subsec:morsehomologyfunctional}, we have an isomorphism of real vector bundles:
    \begin{equation}
    T\widehat{\calM}^{f,g}_X(x,y)\cong\ind D\Xi_{f,g}.
    \end{equation}
Saying \eqref{eq:discussion1} extends to families is equivalent to saying we have constructed the following equivalence of real virtual bundles:
    \begin{equation}
    T\widehat{\calM}^{f,g}_X(x,y)\oplus\underline{\bbR}^{I(y)}\cong\underline{\bbR}^{I(x)},
    \end{equation}
where we have used the canonical isomorphisms 
    \begin{equation}
    \ind T_{\frakM,x}\cong\underline{\bbR}^{I(x)}\;\;\textrm{and}\;\;\ind T_{\frakM,y}\cong\underline{\bbR}^{I(y)}.
    \end{equation}
Finally, by using the isomorphism of real vector bundles 
    \begin{equation}
    T\calM^{f,g}_X(x,y)\oplus\underline{\bbR}\cong T\widehat{\calM}^{f,g}_X(x,y)
    \end{equation}
and the homotopy equivalence 
    \begin{equation}
    \bbM^{f,g}_X(x,y)\simeq\calM^{f,g}_X(x,y),
    \end{equation}
we have the following equivalence of real virtual bundles:
    \begin{equation}\label{eq:morseframing}
    T\bbM^{f,g}_X(x,y)\oplus\underline{\bbR}^{I(y)}\oplus\underline{\bbR}\cong\underline{\bbR}^{I(x)}.
    \end{equation}
    
It remains to show \eqref{eq:morseframing} is compatible with gluing of broken gradient flows, but this follows from (1) operator gluing and (2) \eqref{eqn:morseabstractcapequiv}. Thus, we have a lift of $\bbM^{f,g}_X$ to a framed flow category.

Second, we will prove the claimed equivalence; we fix on $\bbM^{f,g}_X$ the framing induced by \eqref{eq:morseframing}. Let $\Delta$ be the simplex category, $\Delta^m$ the standard $m$-simplex defined by 
    \begin{equation}
    \Delta^m[\bullet]\equiv\hom_\Delta(\bullet,[m]),
    \end{equation}
and $\sset$ the category of simplicial sets. The 0-th simplicial set of the mapping spectrum $\flow^\fr\Big(\unit,\bbM^{f,g}_X\Big)$ is defined to be the simplicial set whose set of $\bullet$-simplices is
    \begin{equation}\label{eqn:0thsimplicialset}
    \Bigg\{\bbX\in\hom_{\sset}\Big(\Delta^1\times\Delta^\bullet,\flow^\fr\Big):\begin{aligned}
    & \bbX\vert_{\brak{0}\times\Delta^\bullet}=\unit \\
    & \bbX\vert_{\brak{1}\times\Delta^\bullet}=\bbM^{f,g}_X
    \end{aligned}
    \Bigg\},
    \end{equation}
cf. \cite[Section 7]{AB24}; the $j$-th simplicial set of $\flow^\fr\Big(\unit,\bbM^{f,g}_X\Big)$ is simply the $j$-fold delooping of \eqref{eqn:0thsimplicialset}. Let $\bbX_\emptyset:\bbX\to\bbX$ denote the empty framed flow bimodule; this represents the trivial map (cf. \cite[Lemma 7.6]{AB24}). Moreover, let $s\bbX:\bbX\to\bbX$ denote the diagonal framed flow bimodule; this represents the identity map (cf. \cite[Section 6.2]{AB24}). For example, an element of: 
\begin{itemize}
\item the set of 0-simplices of \eqref{eqn:0thsimplicialset} is a choice of framed flow bimodule 
    \begin{equation}
    \unit\to\bbM^{f,g}_X,
    \end{equation}
\item  the set of 1-simplices of \eqref{eqn:0thsimplicialset} is a choice of homotopy commutative diagram 
    \begin{equation}
    \begin{tikzcd}
    \unit\arrow[r] & \bbM^{f,g}_X \\
    \unit\arrow[u,"s\unit"]\arrow[r] & \bbM^{f,g}_X\arrow[u,"s\bbM^{f,g}_X"]
    \end{tikzcd}
    ,
    \end{equation}
\item the set of 2-simplices of \eqref{eqn:0thsimplicialset} is a choice of homotopy commutative diagram
    \begin{equation}
    \begin{tikzcd}
    & \unit\arrow[dr,"s\unit"]\arrow[dd,dashed] & \\
    \unit\arrow[ur,"s\unit"]\arrow[rr,"s\unit" {xshift=-2ex}]\arrow[dd] & & \unit\arrow[dd] \\
    & \bbM^{f,g}_X\arrow[dr,dashed,"s\bbM^{f,g}_X"] & \\
    \bbM^{f,g}_X\arrow[ur,dashed,"s\bbM^{f,g}_X"]\arrow[rr,"s\bbM^{f,g}_X"] & & \bbM^{f,g}_X
    \end{tikzcd}
    ,
    \end{equation}
\item and so on for higher-dimensional simplices. 
\end{itemize}
We wish to construct an equivalence 
    \begin{equation}
    \Sigma^\infty\calC_f\xrightarrow{\sim}\frakM^{f,g}_X.
    \end{equation}
By the standard adjunction,
    \begin{equation}
    \hom_{\sset}\Big(\calC_f,\Omega^\infty\frakM^{f,g}_X\Big)\cong\hom_{\spectra}\Big(\Sigma^\infty\calC_f,\frakM^{f,g}_X\Big),
    \end{equation}
it will suffice to construct a morphism of simplicial sets 
    \begin{equation}\label{eqn:mapprelim}
    \calC_f\to\Omega^\infty\frakM^{f,g}_X,
    \end{equation}
where we identify $\calC_f$ with its singular simplicial set $\sing\calC_f$, and show the induced map of spectra is a weak equivalence.\footnote{Weak equivalence suffices since both $\Sigma^\infty\calC_f$ and $\frakM^{f,g}_X$ are bounded below.} Note, $\Omega^\infty\frakM^{f,g}_X$ is defined as a colimit of iterated stabilizations $\Omega^j\Sigma^j$ of \eqref{eqn:0thsimplicialset}; thus, we may define \eqref{eqn:mapprelim} by defining a map from $\calC_f$ to \eqref{eqn:0thsimplicialset}, where the latter is the initial object of the colimit, and passing to the colimit.

Let $(N,N_-,N_+)$ be an index triple for $S_f$. Since $N_-$ is a submanifold of $N$, we may triangulate $N_-$ and extend to a triangulation of $N$; moreover, we may assume each simplex is a smoothly embedded submanifold with corners. In particular, we may identify $\sing\calC_f$ with the quotient simplicial set
    \begin{align}
    \sing(N,N_-)[\bullet]&\equiv\sing N[\bullet]/\sing N_-[\bullet] \\
    &=\big\{\sigma\in\sing N[\bullet]:\sigma(\Delta^\bullet)\cap(N-N_-)\neq\emptyset\big\}\cup\{*\}.
    \end{align}

\begin{warning}
This is an immense abuse of notation. We are using the notation $\sing N$ and $\sing N_-$ to denote the simplicial set whose simplices are only those in the fixed triangulation of $(N,N_-)$, i.e., not the simplicial set consisting of \emph{all} singular simplices as is typical of this notation. In particular, $\sing\calC_f$ now denotes the simplicial set whose simplices are only those induced by the fixed triangulation of $(N,N_-)$. We hope no confusion arises.
\end{warning}

Two observations to make, which both come from the fact that $N_-$ is an exit set for $-\grad f$, are the following.
\begin{enumerate}
\item We have that, for any $\sigma\in\sing N[\bullet]$, 
    \begin{equation}
    \sigma\big(\sigma^{-1}(N_-)\big)\cap W^s(x)=\emptyset,\;\;x\in\crit(f).
    \end{equation}
By perturbing each relative simplex $\sigma\in\sing(N,N_-)[\bullet]$ within its homotopy class, without altering the map on $\sigma^{-1}(N_-)$, we may assume from the onset that each $\sigma$ intersects every stable manifold of $f$ transversely. 
\item We have that, for any $x\in\crit(f)$, the map 
    \begin{equation}\label{eqn:eval}
    {\rm Eval}:W^s(x)\to N-N_-,
    \end{equation}
given by evaluating a half gradient flow at its starting point, is proper.
\end{enumerate}
We now construct a map $\sing(N,N_-)$ to \eqref{eqn:0thsimplicialset} as follows.
\begin{enumerate}
\item Given any $x\in\crit(f)$ and $\sigma\in\sing(N,N_-)[0]$, we define
    \begin{equation}
    \bbX^{\brak{0}}_\sigma(*,x)\equiv{\rm Eval}^{-1}\big(\sigma(\Delta^0)\big)
    \end{equation}
to be the moduli space of half gradient flows starting on $\sigma(\Delta^0)$ and ending at $x$. Moreover, since $N_-$ is an exit set for $-\grad f$, we see any 0-simplex in $\sing N_-$ maps to the empty framed flow bimodule. We may naturally construct a stable framing on $\bbX^{\brak{0}}_\sigma(*,x)$ via gluing Morse abstract caps. In particular, we define $\bbX^{\brak{0}}_\sigma$ to be the 0-simplex in \eqref{eqn:0thsimplicialset} defined as the framed flow bimodule determined by the $\bbX^{\brak{0}}_\sigma(*,x)$'s.

\item Given any $x\in\crit(f)$ and $\sigma\in\sing(N,N_-)[1]$, we define
    \begin{equation}
    \widetilde{\bbX}^{\brak{01}}_\sigma(*,x)\equiv{\rm Eval}^{-1}\big(\sigma(\Delta^1)\big)
    \end{equation}
to be the moduli space of half gradient flows starting on $\sigma(\Delta^1)$ and ending at $x$; this is (generically) a smooth manifold. Moreover, since $N_-$ is an exit set for $-\grad f$, we see any 1-simplex in $\sing N_-$ maps to the homotopy commutative diagram which fills in the empty framed flow bimodules and obvious homotopy. We may naturally construct a stable framing of $\widetilde{\bbX}^{\brak{01}}_\sigma(*,x)$ via gluing Morse abstract caps. We define 
    \begin{equation}
    \bbX^{\brak{01}}_\sigma(*,x)
    \end{equation}
to be the Morse-theoretic compactification of $\widetilde{\bbX}^{\brak{01}}_\sigma(*,x)$ given by allowing breakings at critical points; this inherits a natural stable framing. In particular, we define $\bbX^{\brak{01}}_\sigma$ to be the 1-simplex in \eqref{eqn:0thsimplicialset} defined as the homotopy 
    \begin{equation}
    \bbX^{\brak{01}}_\sigma:\bbX^{\brak{0}}_{\partial^{\brak{0}}\sigma}\Rightarrow \bbX^{\brak{1}}_{\partial^{\brak{1}}\sigma},
    \end{equation}
where $\bbX^{\brak{j}}_\sigma$ is constructed as in (1), using the $\bbX^{\brak{01}}_\sigma(*,x)$'s.
\item Given any $x\in\crit(f)$ and $\sigma\in\sing(N,N_-)[2]$, we define 
    \begin{equation}
    \widetilde{\bbX}^{\brak{012}}_\sigma(*,x)\equiv{\rm Eval}^{-1}\big(\sigma(\Delta^2)\big)
    \end{equation}
to be the moduli space of half gradient flows starting on $\sigma(\Delta^2)$ and ending at $x$; this is (generically) a smooth manifold. Moreover, since $N_-$ is an exit set for $-\grad f$, we see any 2-simplex in $\sing N_-$ maps to the homotopy commutative diagram which fills in the empty framed flow bimodules, obvious homotopies, and obvious homotopy of homotopies. We may naturally construct a stable framing of $\widetilde{\bbX}^{\brak{012}}_\sigma(*,x)$ via gluing Morse abstract caps. We define 
    \begin{equation}
    \bbX^{\brak{012}}_\sigma(*,x)
    \end{equation}
to be the Morse-theoretic compactification of $\widetilde{\bbX}^{\brak{012}}_\sigma(*,x)$ given by allowing breakings at critical points; this inherits a natural stable framing. In particular, we define $\bbX^{\brak{012}}_\sigma$ to be the 2-simplex in \eqref{eqn:0thsimplicialset} defined as the homotopy 
    \begin{equation}
    \bbX^{\brak{012}}_\sigma:\bbX^{\brak{12}}_{\partial^{\brak{12}}\sigma}\circ\bbX^{\brak{01}}_{\partial^{\brak{01}}\sigma}\Rightarrow\bbX^{\brak{02}}_{\partial^{\brak{02}}\sigma},
    \end{equation}
where $\bbX^{\brak{jk}}_{\partial^{\brak{jk}}\sigma}$ is constructed as in (2), using the $\bbX^{\brak{012}}_\sigma(*,x)$'s.
\item And so on for higher-dimensional simplices.
\end{enumerate}
We have now constructed a map $\sing(N,N_-)$ to \eqref{eqn:0thsimplicialset}; hence, we have constructed a map 
    \begin{equation}
    \sing\calC_f\cong\sing(N,N_-)\to\Omega^\infty\frakM^{f,g}_X.
    \end{equation}
This induces a map of spectra
    \begin{equation}\label{eqn:mapofspectra}
    \Sigma^\infty\calC_f\to\frakM^{f,g}_X,
    \end{equation}
and we consider the induced map on homotopy groups:
    \begin{equation}\label{eqn:homotopygroups}
    \pi_m\big(\Sigma^\infty\calC_f\big)\to\pi_m\big(\frakM^{f,g}_X\big)\equiv\ho\flow^\fr\Big(\Sigma^m\unit,\bbM^{f,g}_X\Big), 
    \end{equation}
where $\ho\flow^\fr$ is the homotopy category associated to $\flow^\fr$.

It remains to show \eqref{eqn:homotopygroups} is an isomorphism; this follows the same geometric idea of the various arguments in the closed case. The spectrum $\frakM^{f,g}_X$ is filtered by the values of $f$; thus, by perturbing so that the critical values of $f$ are distinct and restricting to $f^{-1}\big([a,b]\big)$, we immediately reduce to the case of a single non-degenerate critical point $x$ (i.e., \eqref{eqn:mapofspectra} is a map of filtered spectra, each of which is bounded below; thus, it suffices to establish the induced map on the associated graded is a weak equivalence); we proceed as follows. Since $(f,g)$ is Morse-Smale, we have that the maximal Conley index of $x$ is simply $S^{I(x)}$ (where the latter is thought of as a pointed space). Moreover, we have that $\bbM^{f,g}_X=\Sigma^{I(x)}\unit$. In particular, \eqref{eqn:mapofspectra}, in the case of a single critical point $x$, becomes 
    \begin{equation}\label{eqn:equiv}
    \Sigma^{I(x)}\bbS\to\flow^\fr\Big(\unit,\Sigma^{I(x)}\unit\Big).
    \end{equation}
Now, \cite[Proposition 1.10]{AB24} shows $\flow^\fr$ and $\spectra$ are equivalent stable $\infty$-categories via the equivalence
    \begin{equation}
    \flow^\fr(\unit,-):\flow^\fr\to\spectra;
    \end{equation}
moreover, \cite[Corollary 8.12]{AB24} shows 
    \begin{equation}
    \Sigma^{I(x)}\bbS\simeq\flow^\fr\Big(\unit,\Sigma^{I(x)}\unit\Big).
    \end{equation}
We wish to show \eqref{eqn:equiv} is an equivalence; it suffices to show the induced map on $\pi_{I(x)}$ maps 1 to 1:
     \begin{equation}\label{eqn:piix}
    \pi_{I(x)}\big(\Sigma^{I(x)}\bbS)\cong\bbZ\to\pi_{I(x)}\big(\Sigma^{I(x)}\bbS\big)\cong\bbZ.
    \end{equation}
Let us recall the specific situation we are in and how we obtained the map \eqref{eqn:equiv}. We may take $N$ to be a small open ball centered at $x$ such that the restriction $f\vert_N$ has a unique critical point $x$ at the center of $N$. Thus, as already mentioned:
    \begin{equation}
    \calC_f\simeq S^{I(x)},\;\;\bbM^{f,g}_X=\Sigma^{I(x)}\unit.
    \end{equation}
One thing to note is that $\calC_f$, as we already know, is actually defined to be the quotient $N/N_-$. In particular, the homotopy equivalence $\calC_f\simeq S^{I(x)}$ is constructed by identifying a specific $I(x)$-sphere inside of $\calC_f$ which contains the basepoint $x_0\in\calC_f$ (where $x_0$ is necessarily distinct from $x$ since $N_-\subset\partial N$). The desired $I(x)$-sphere is given by 
    \begin{equation}
    \big(W^u(x)\cap N\big)/\big(W^u(x)\cap N_-\big);
    \end{equation}
the desired homotopy equivalence is then straightforward to construct. Now, we may identify $\sing\calC_f$ with the simplicial set consisting of the non-degenerate simplices of $\Delta^{I(x)}/\partial\Delta^{I(x)}$, i.e., we may assume $\sing\calC_f$ satisfies 
    \begin{equation}
    \sing\calC_f[\bullet]=\begin{cases}
    \{\sigma_\bullet\}, & \bullet\leq I(x) \\
    \emptyset, & \textrm{otherwise}
    \end{cases}
    ,
    \end{equation}
where 
    \begin{equation}
    \sigma_\bullet(\Delta^\bullet)=\begin{cases}
    S^{I(x)}, & \bullet=I(x) \\
    x_0, & \bullet\leq I(x)-1
    \end{cases}
    .
    \end{equation}
(Of course, $\sigma_{I(x)}:\Delta^{I(x)}\to S^{I(x)}$ is the map which crushes $\partial\Delta^{I(x)}$ to $x_0$.) In particular, by the discussion above, we have constructed a map
    \begin{equation}\label{eqn:equiv2}
    \sing\calC_f\to\underline{\flow}^\fr\Big(\unit,\Sigma^{I(x)}\unit\Big),
    \end{equation}
where $\underline{\flow}^\fr\Big(\unit,\Sigma^{I(x)}\unit\Big)$ is the 0-th simplicial set of $\flow^\fr\Big(\unit,\Sigma^{I(x)}\unit\Big)$, given by sending: $\sigma_\bullet$, $\bullet\leq I(x)-1$, to 
    \begin{equation}
    \bbX^{\langle0\cdots\bullet\rangle}_{\sigma_\bullet}\in\underline{\flow}^\fr\Big(\unit,\Sigma^{I(x)}\unit\Big)[\bullet],\;\;\bbX^{\langle0\cdots\bullet\rangle}_{\sigma_\bullet}(*,x)\equiv\emptyset;
    \end{equation}
and $\sigma_{I(x)}$ to
    \begin{equation}\label{eqn:generatorofhomotopy}
    \bbX^{\langle0\cdots I(x)\rangle}_{\sigma_{I(x)}}\in\underline{\flow}^\fr\Big(\unit,\Sigma^{I(x)}\unit\Big)\big[I(x)\big],\;\;\bbX^{\langle0\cdots I(x)\rangle}_{\sigma_{I(x)}}(*,x)\equiv*,
    \end{equation}
where $\bbX^{\langle0\cdots I(x)\rangle}_{\sigma_{I(x)}}(*,x)$ has the stable framing corresponding to the positive orientation. We would like to emphasize that $\bbX^{\langle0\cdots\bullet\rangle}_{\sigma_\bullet}(*,x)=\emptyset$, $\bullet\leq I(x)-1$, resp. $\bbX^{\langle0\cdots I(x)\rangle}_{\sigma_{I(x)}}(*,x)=*$ since the image of $W^s(x)$ inside of $\calC_f$ intersects $\big(W^u(x)\cap N\big)/\big(W^u(x)\cap N_-\big)$ precisely in $\{x\}$. Since 
    \begin{equation}
    \Omega^\infty\Sigma^{I(x)}\bbS\simeq\Omega^\infty\flow^\fr\Big(\unit,\Sigma^{I(x)}\unit\Big),
    \end{equation}
we have that 
    \begin{equation}
    \pi^{\rm st}_{I(x)}\bigg(\underline{\flow}^\fr\Big(\unit,\Sigma^{I(x)}\unit\Big)\bigg)\cong\pi_{I(x)}\bigg(\Omega^\infty\flow^\fr\Big(\unit,\Sigma^{I(x)}\unit\Big)\bigg)\cong\bbZ,
    \end{equation}
where $\pi^{\rm st}_{I(x)}(-)$ denotes the $I(x)$-th stable homotopy group. Now, by construction, the map on $\pi^{\rm st}_{I(x)}$ induced by \eqref{eqn:equiv2} maps 1 to 1: 
    \begin{equation}\label{eqn:previousexpression}
    \pi^{\rm st}_{I(x)}(\sing\calC_f)\cong\bbZ\to\pi^{\rm st}_{I(x)}\bigg(\underline{\flow}^\fr\Big(\unit,\Sigma^{I(x)}\unit\Big)\bigg)\cong\bbZ;
    \end{equation}
this is because the homotopy class of the framed flow bimodule 
    \begin{equation}
    \Sigma^{I(x)}\unit\to\Sigma^{I(x)}\unit
    \end{equation}
associated to \eqref{eqn:generatorofhomotopy} is exactly the generator of 
    \begin{align}
    \ho\flow^\fr\Big(\Sigma^{I(x)}\unit,\Sigma^{I(x)}\unit\Big)&\equiv\pi_{I(x)}\bigg(\flow^\fr\Big(\unit,\Sigma^{I(x)}\unit\Big)\bigg) \\
    &\cong\pi^{\rm st}_{I(x)}\bigg(\underline{\flow}^\fr\Big(\unit,\Sigma^{I(x)}\unit\Big)\bigg).
    \end{align}
Finally, by unwinding the adjunction $\Sigma^\infty\dashv\Omega^\infty$, \eqref{eqn:previousexpression} is exactly \eqref{eqn:piix}.
\end{proof}

\section{Floer homotopy on Liouville manifolds}\label{sec:floerhomotopytheory}
\subsection{Background geometry}
In the next three subsections, we will review the construction of Lagrangian Floer cohomology on Liouville manifolds, cf. \cite{Abo10, AS10a}. In the following subsection, we will review the construction of the Floer homotopy type for a pair of Lagrangians in a Liouville manifold, cf. \cite{Lar21}.

Let $\Big(\widehat{M},\widehat{\omega}=d\widehat{\theta}\Big)$ be an exact symplectic $2n$-manifold. Recall, the Liouville 1-form $\widehat{\theta}$ determines the Liouville vector field $\widehat{Z}_{\widehat{\theta}}$. We call $\widehat{M}$ a Liouville domain if it is compact with boundary and $\widehat{Z}_{\widehat{\theta}}$ is strictly outward pointing along $\partial\widehat{M}$; thus, $\partial\widehat{M}$ may be given the contact structure $\ker\widehat{\theta}\vert_{\partial\widehat{M}}$. Moreover, $\widehat{M}$ can be completed to an open exact symplectic manifold,
    \begin{equation}
    \Big(M\equiv\widehat{M}\cup_{\partial\widehat{M}}\big(\partial\widehat{M}\times[0,\infty)\big),\omega=d \theta\Big),
    \end{equation}
by gluing on the positive-half of the symplectization
    \begin{equation}
    \Big(\partial\widehat{M}\times[0,\infty),e^r\widehat{\theta}\vert_{\partial\widehat{M}}\Big),\;\; r\in[0,\infty).
    \end{equation}
A cotangent bundle of a closed smooth manifold with its natural symplectic structure is a standard example of a Liouville manifold.

Recall, a Lagrangian $L\subset M$ is exact if $\theta\vert_L=df$, where $f\in C^\infty(L)$. We will only consider two kinds of Lagrangians in $M$.
    \begin{itemize}
    \item\emph{Closed exact:} $L\subset M$ is a closed exact Lagrangian.
    \item\emph{Cylindrical at infinity:} $L\subset M$ is an open exact Lagrangian which is cylindrical at infinity and intersects $\partial\widehat{M}$ in a Legendrian $\partial L$.
    \end{itemize}
Moreover, it will be convenient, but not necessary, to assume our Lagrangians are graded. In particular, this induces a fixed absolute $\bbZ$-grading on Hamiltonian chords (or intersection points, depending on our viewpoint) given by the Maslov index $\mu$.
    
\subsection{Admissible Floer data}
Let $(L_0,L_1)$ be a pair of Lagrangians in $M$, where $L_i$ is either closed exact or cylindrical at infinity.

We work with Hamiltonians which are linear at infinity, i.e., a smooth function $H\in C^\infty(M)$ together with a non-negative constant $\tau\in\bbR_{\geq0}$ such that, at infinity, 
    \begin{equation}
    H\equiv\tau e^r.
    \end{equation}
We refer to $\tau$ as the slope of $H$. More generally, we work with time-dependent Hamiltonians $H\equiv\{H_t\}_{t\in[0,1]}$ which are linear at infinity (of a fixed slope). In the case that both $L_0$ and $L_1$ are cylindrical at infinity, we will assume the slope is not the length of a Reeb chord connecting $\partial L_0$ to $\partial L_1$.

Let $X_{H_t}$ be the (time-dependent) Hamiltonian vector field of $H$; we use the convention 
    \begin{equation}
    \omega\big(X_{H_t},\cdot\big)=-dH_t(\cdot).
    \end{equation}
We denote by $\phi^t_H$ the (time-dependent) Hamiltonian flow of $H$. Consider the set of Hamiltonian chords of $(L_0,L_1)$ (associated to $H$):
    \begin{equation}
    \chi(L_0,L_1;H)\equiv\Big\{x\in C^\infty\big([0,1];M\big):\partial_tx(t)=X_{H_t}\big(x(t)\big),x(i)\in L_i\Big\}.
    \end{equation}
Every Hamiltonian chord of $(L_0,L_1)$ corresponds to an intersection point of the Lagrangian pair $(L_0,\phi^{-1}_HL_1)$, and we say a Hamiltonian chord is non-degenerate if the corresponding intersection point is transverse. We will assume the (generic) condition that $H$ is non-degenerate, i.e., each Hamiltonian chord of $H$ is non-degenerate.

We work with ($\omega$-compatible) almost complex structures which are of contact type at infinity, i.e., an almost complex structure $J$ on $M$ such that, at infinity, 
    \begin{equation}
    J(Z_\theta)=R_{\theta\vert_{\partial\widehat{M}}},
    \end{equation}
where $R_{\theta\vert_{\partial\widehat{M}}}$ is the Reeb field of $\theta\vert_{\partial\widehat{M}}$, and $J$ stabilizes $\ker\theta\vert_{\partial\widehat{M}}$. More generally, we work with time-dependent ($\omega$-compatible) almost complex structures $J\equiv\{J_t\}_{t\in[0,1]}$ which are of contact type at infinity. Note, this class of almost complex structures is generic.  

\subsection{Lagrangian Floer cohomology}\label{subsec:floercohomology}
Let $(H,J)$ be a pair such that $H$ is a Hamiltonian which is linear at infinity and $J$ is an almost complex structure which is of contact type at infinity. We call such a pair \emph{admissible}. We may assume $H$ is non-degenerate.

Let $x,y\in\chi(L_0,L_1;H)$ and consider the following Banach manifold:
    \begin{equation}
    \scrP^{k,p}_\frakF(x,y)\equiv\Bigg\{u\in W^{k,p}_\loc(\Theta;M):\begin{aligned}
    & u\big(\bbR\times\{i\}\big)\subset L_i \\
    & \lim_{s\to-\infty}u(s,t)=x(t), \lim_{s\to+\infty}u(s,t)=y(t)
    \end{aligned}
    \Bigg\},
    \end{equation}
$kp>2$, where $(s,t)$ are coordinates on $\Theta\equiv\bbR\times[0,1]$. There is a Banach bundle $\scrE_\frakF\to\scrP^{k,p}_\frakF(x,y)$, with fiber $\scrE_\frakF\vert_u\equiv W^{k-1,p}(\Theta;u^*TM)$, together with a section 
    \begin{align}
    \overline{\partial}_{H,J}:\scrP^{k,p}_\frakF(x,y)&\to\scrE_\frakF \\
    u(s,t)&\mapsto\partial_su(s,t)+J_t\Big(\partial_tu(s,t)-X_{H_t}\big(u(s,t)\big)\Big). \nonumber
    \end{align}
Elliptic regularity shows $\widehat{\calF}^{H,J}_{L_0,L_1}(x,y)\equiv(\overline{\partial}_{H,J})^{-1}(0)$ consists of smooth solutions to the Hamiltonian perturbed pseudo-holomorphic curve equation connecting $x$ to $y$, i.e., Floer trajectories connecting $x$ to $y$. Let $\nabla$ be a connection on $TM$, then the linearization of $\overline{\partial}_{H,J}$ is given by an operator 
    \begin{equation}
    D(\overline{\partial}_{H,J})_u:T_u\scrP^{k,p}_\frakF(x,y)\to\scrE_\frakF\vert_u,
    \end{equation}
where 
    \begin{align}
    T_u\scrP^{k,p}_\frakF(x,y)&=W^{k,p}(\Theta;u^*TM,u^*TL_i) \\
    &\equiv\Big\{\xi\in W^{k,p}(\Theta;u^*TM):\xi\big(\bbR\times\{i\}\big)\subset u^*TL_i\vert_{\bbR\times\{i\}}\Big\}.
    \end{align}
Note, the linearization at a Floer trajectory is independent of $\nabla$. We say $(H,J)$ is \emph{regular} if $D(\overline{\partial}_{H,J})_u$, for any two $x,y\in\chi(L_0,L_1;H)$ and any $u\in\widehat{\calF}^{H,J}_{L_0,L_1}(x,y)$, is a surjective Fredholm operator of index $\mu(x)-\mu(y)$; this condition is generic and we will assume it for the rest of this subsection. Hence, $\widehat{\calF}^{H,J}_{L_0,L_1}(x,y)$ is transversely cut out of dimension $\mu(x)-\mu(y)$. There is a free proper $\bbR$-action on $\widehat{\calF}^{H,J}_{L_0,L_1}(x,y)$, when $x\neq y$, given by time-shift in the $s$-coordinate. The corresponding $\bbR$-quotient $\calF^{H,J}_{L_0,L_1}(x,y)$ has a compactification $\bbF^{H,J}_{L_0,L_1}(x,y)$ given by allowing breakings at Hamiltonian chords. The Lagrangian Floer cochain complex (with $\bbZ/2$-coefficients), denoted $CF^*(L_0,L_1;H,J;\bbZ/2)$, is generated in degree $m$ by Maslov index $m$ Hamiltonian chords and has codifferential 
    \begin{equation}
    y\mapsto\sum_{\mu(x)=\mu(y)+1}\big\lvert\bbF^{H,J}_{L_0,L_1}(x,y)\big\rvert_{\bbZ/2}\cdot x.\footnote{Of course, we may define Lagrangian Floer cohomology with $\bbZ$-coefficients, in the spin case, by using a system of coherent orientations, cf. \cite{FOOO09}.}
    \end{equation}
The Lagrangian Floer cohomology, denoted 
    \begin{equation}
    HF^*(L_0,L_1;H,J;\bbZ/2)\equiv H_*\big(CF^*(L_0,L_1;H,J;\bbZ/2),\delta\big),
    \end{equation}
is known to be invariant under compactly supported Hamiltonian deformations of $(L_0,L_1)$ and the choice of regular admissible pair $(H,J)$.

\subsection{Floer homotopy type}
We now assume $(L_0,L_1)$ has a framed brane structure $\Lambda$. We denote by $\bbF^{H,J}_{L_0,L_1}$ the flow category with object set $\chi(L_0,L_1;H)$ and morphism spaces $\bbF^{H,J}_{L_0,L_1}(x,y)$ (the composition is the standard gluing map of broken Floer trajectories). We will review Large's proof that $\Lambda$ induces a lift of $\bbF^{H,J}_{L_0,L_1}$ to a framed flow category $\bbF^{H,J,\Lambda}_{L_0,L_1}$; hence, we may define the Floer homotopy type
    \begin{equation}
    \frakF^{H,J,\Lambda}_{L_0,L_1}\equiv\flow^\fr\Big(\unit,\bbF^{H,J,\Lambda}_{L_0,L_1}\Big).
    \end{equation}
Again, we emphasize that, while $\frakF^{H,J,\Lambda}_{L_0,L_1}$ will be independent under compactly supported Hamiltonian deformations of $(L_0,L_1)$ and the choice of regular admissible pair $(H,J)$, it will depend on the choice of $\Lambda$.

\begin{rem}
When constructing Lagrangian Floer cohomology, the nonlinearities require us to work in Sobolev spaces satisfying $kp>2$. However, when performing the linear theory (as we are about to do in this subsection), we are allowed to work in the $W^{1,2}$-Sobolev space since the index bundles of our operators will be unaffected; we do this for convenience.  
\end{rem}

Let $R\in\bbR_{\geq0}$; we define
    \begin{equation}
    \Theta^-_R\equiv(-\infty,-R]\times[0,1]\;\;\textrm{resp.}\;\;\Theta^+_R\equiv[R,+\infty)\times[0,1].
    \end{equation}
Recall, we have fixed a sufficiently small positive constant $\lambda\in\bbR_{>0}$. We begin with two lemmata. 

\begin{lem}\label{lem:standardfloeroperator2}
Let $F\to\{0\}$ be a real vector space and $E\to[0,1]$ the complexified vector bundle $E\equiv F\otimes_\bbR\underline{\bbC}$, where we pullback $F$ to $[0,1]$. The operator 
    \begin{align}
    S:W^{1,2}([0,1];E,F)&\to L^2([0,1];E) \\
    \xi(t)&\mapsto i\partial_t\xi(t)+\lambda\xi(t) \nonumber
    \end{align}
is invertible.  
\end{lem}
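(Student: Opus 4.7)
The plan is to show that $S$ is Fredholm of index zero and has trivial kernel, from which invertibility follows immediately.

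First, I would choose a basis trivialization $(E, F) \cong (\underline{\bbC}^{\tilde{m}}, \underline{\bbR}^{\tilde{m}})$, under which $S$ becomes the operator $\xi(t) \mapsto i\partial_t \xi(t) + \lambda \xi(t)$ on $W^{1,2}([0,1]; \underline{\bbC}^{\tilde{m}}, \underline{\bbR}^{\tilde{m}})$. The unperturbed operator $i\partial_t$ is a first-order boundary-value operator on the compact interval $[0,1]$ with constant totally real (Lagrangian) boundary condition, and a direct computation (or the Robbin--Salamon spectral-flow formula) shows it is Fredholm of index zero: its kernel consists of the constant sections with values in $\bbR^{\tilde{m}}$, and the same holds for its formal $L^2$-adjoint after the symplectic-orthogonal boundary condition is computed. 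Since $[0,1]$ is compact, multiplication by the scalar $\lambda$ defines a compact operator $W^{1,2}\to L^2$ by Rellich--Kondrachov, so $S$ is Fredholm of index zero as well.

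Second, I would compute the kernel explicitly. Any $\xi\in\ker S$ solves $\partial_t\xi = i\lambda\xi$, whose general solution is $\xi(t) = e^{i\lambda t}v$ for some $v \in \bbC^{\tilde{m}}$. The boundary condition at $t=0$ forces $v\in\bbR^{\tilde{m}}$, and the boundary condition at $t=1$ forces $e^{i\lambda}v = (\cos\lambda + i\sin\lambda)v \in \bbR^{\tilde{m}}$, which reduces to $\sin(\lambda)\,v = 0$. Taking $\lambda$ sufficiently small and non-zero (so that $0 < \abs{\lambda} < \pi$) guarantees $\sin\lambda\neq 0$, hence $v = 0$. Combined with $\ind S = 0$, this gives $\coker S = 0$, and therefore $S$ is invertible.

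There is no genuine technical obstacle: this is simply the interval analogue of Lemma \ref{lem:standardfloeroperator1}, but easier, because compactness of $[0,1]$ obviates the Fourier-in-$t$ decomposition used there. The only interpretive point is the meaning of ``sufficiently small'' in the statement, and the computation above makes clear that any $\lambda \in (-\pi,\pi)\setminus\{0\}$ suffices, which is entirely compatible with the smallness assumption in Lemma \ref{lem:standardfloeroperator1}.
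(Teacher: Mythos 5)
Your proof is correct. The paper's own proof consists only of the instruction to ``appropriately modify the proof of Lemma~\ref{lem:standardfloeroperator1},'' whose key ingredients are a spectral-flow argument for index zero and a Fourier-in-$t$ decomposition for triviality of the kernel. You make the two analogous steps more elementary: you obtain Fredholmness of index zero by exhibiting the kernel and cokernel of $i\partial_t$ directly (both equal to the real constants $\bbR^{\tilde{m}}$) and noting that the $\lambda$-term is a compact perturbation by Rellich--Kondrachov, and you compute $\ker S$ by solving the ODE $\partial_t\xi = i\lambda\xi$ on $[0,1]$ rather than decomposing into Fourier modes $e^{\pi i j t}$. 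Both routes give the same conclusion; in the interval case the Fourier expansion would simply reproduce your ODE solution term-by-term, so your direct argument is the cleaner of the two. A small bonus of your approach is that it makes the quantitative meaning of ``sufficiently small'' transparent: any $\lambda$ with $\sin\lambda\neq 0$, i.e.\ $\lambda\notin\pi\bbZ$, works, so in particular $0<\abs{\lambda}<\pi$ suffices.
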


\begin{proof}
Since $[0,1]$ is contractible, we may choose a unitary trivialization $\Psi:E\xrightarrow{\sim}\underline{\bbC}^{\widetilde{m}}$ together with an identification of $S$ with the operator 
    \begin{align}
    S:W^{1,2}([0,1];\underline{\bbC}^{\widetilde{m}},\Psi(F))&\to L^2([0,1];\underline{\bbC}^{\widetilde{m}}) \\
    \xi(t)&\mapsto i\partial_t\xi(t)+\lambda\xi(t). \nonumber
    \end{align}
Consider the decompositions of Hilbert spaces, 
    \begin{align}
    W^{1,2}([0,1];\underline{\bbC}^{\widetilde{m}},\Psi(F))&\cong\bigoplus_{j\in\bbZ}\Psi(F), \\
    L^2([0,1];\underline{\bbC}^{\widetilde{m}})&\cong\bigoplus_{j\in\bbZ}\Psi(F),
    \end{align}
given by taking a Fourier series in the $t$ variable, i.e., 
    \begin{equation}
    \xi(t)\mapsto\sum_{j\in\bbZ}e^{\pi ijt}\xi_j.
    \end{equation}
The aforementioned decompositions induce a splitting of $S$ into a direct sum of operators:
    \begin{equation}
    S\xi(t)=\sum_{j\in\bbZ}e^{\pi ijt}S_j\xi;
    \end{equation}
here, $S_j$ is the operator 
    \begin{align}
    S_j:\Psi(F)&\to\Psi(F) \\
    \xi_j&\mapsto(\lambda-\pi j)\xi_j.\nonumber
    \end{align}
Clearly, each $S_j$ is invertible (for sufficiently small $\lambda$); the lemma follows.
\end{proof}

\begin{lem}\label{lem:standardfloeroperator1}
Let $F\to\bbR$ be a real vector bundle with connection $\nabla$ and $E\to\Theta$ the complexified vector bundle $E\equiv F\otimes_\bbR\underline{\bbC}$ with complexified connection $\nabla^\bbC$, where we pullback $(F,\nabla)$ to $\Theta$. Assume we have fixed identifications $(E,F)\cong(\underline{\bbC}^{\widetilde{m}},\underline{\bbR}^{\widetilde{m}})$ over $\Theta^\pm_R$, which trivialize $\nabla^\bbC$ resp. $\nabla$, in order to define
    \begin{equation}
    W^{1,2}(\Theta;E,F)\equiv\Big\{\xi\in W^{1,2}(\Theta;E):\xi\big(\bbR\times\{i\}\big)\subset F\vert_{\bbR\times\{i\}}\Big\}.
    \end{equation}
The operator 
    \begin{align}
    S:W^{1,2}(\Theta;E,F)&\to L^2(\Theta;E) \\
    \xi(s,t)&\mapsto\nabla^\bbC_s\xi(s,t)+i\partial_t\xi(s,t)+\lambda\xi(s,t) \nonumber
    \end{align}
is invertible.
\end{lem}
\begin{proof}
Since $\Theta$ is contractible, we may choose a unitary trivialization $\Psi:E\xrightarrow{\sim}\underline{\bbC}^{\widetilde{m}}$, which agrees with our fixed identifications over $\Theta^\pm_R$, together with an identification of $S$ with the operator
    \begin{align}
    S:W^{1,2}(\Theta;\underline{\bbC}^{\widetilde{m}},\Psi(F))&\to L^2(\Theta;\underline{\bbC}^{\widetilde{m}}) \\
    \xi(s,t)&\mapsto\partial_s\xi(s,t)+i\partial_t\xi(s,t)+\lambda\xi(s,t). \nonumber
    \end{align}
By examining the spectral flow and using Lemma \ref{lem:standardfloeroperator2}, we see $S$ is an index 0 Fredholm operator. Consider the decompositions of Hilbert spaces 
    \begin{align}
    W^{1,2}(\Theta;\underline{\bbC}^{\widetilde{m}},\Psi(F))&\cong\bigoplus_{j\in\bbZ}W^{1,2}(\bbR;\Psi(F)), \\
    L^2(\Theta;\underline{\bbC}^{\widetilde{m}})&\cong\bigoplus_{j\in\bbZ}L^2(\bbR;\Psi(F)),
    \end{align}
given by taking a Fourier series in the $t$ variable, i.e., 
    \begin{equation}
    \xi(s,t)\mapsto\sum_{j\in\bbZ}e^{\pi ijt}\xi_j(s).
    \end{equation}
The aforementioned decompositions induce a splitting of $S$ into a direct sum of operators:
    \begin{equation}
    S\xi(s,t)=\sum_{j\in\bbZ}e^{\pi ijt}S_j\xi(s);
    \end{equation}
here, $S_j$ is the operator 
    \begin{align}
    S_j:W^{1,2}(\bbR;\Psi(F))&\to L^2(\bbR;\Psi(F)) \\
    \xi_j(s)&\mapsto\partial_s\xi_j(s)+(\lambda-\pi j)\xi_j(s). \nonumber
    \end{align}
Suppose for contradiction that there exists a non-zero $\xi_j\in\ker S_j$. We may compute:
    \begin{equation}
    \partial_s\big\langle\xi_j(s),\xi_j(s)\big\rangle=2\big\langle\xi_j(s),\partial_s\xi_j(s)\big\rangle=2\pi j\big\langle\xi_j(s),\xi_j(s)\big\rangle-2\lambda\big\langle\xi_j(s),\xi_j(s)\big\rangle,
    \end{equation}
where $\brak{\cdot,\cdot}$ is the inner product on $\Psi(F)$. It follows that, since $\lambda$ is sufficiently small and positive, $\partial_s\big\langle\xi_j(s),\xi_j(s)\big\rangle$ is strictly positive resp. negative for $j>0$ resp. $j\leq0$. But, this contradicts the assumption that $\xi_j$ is $W^{1,2}$, hence $\ker S_j=0$; the lemma follows.
\end{proof}

Consider $u\in\widehat{\calF}^{H,J}_{L_0,L_1}(x,y)$ and recall the associated operator $D(\overline{\partial}_{H,J})_u$. We may also consider the invertible self-adjoint operator
    \begin{align}
    D(\overline{\partial}_{H,J})_x:W^{1,2}([0,1];x^*TM,x^*TL_i)&\to L^2([0,1];x^*TM)\;\;\textrm{resp.} \\
    D(\overline{\partial}_{H,J})_y:W^{1,2}([0,1];y^*TM,y^*TL_i)&\to L^2([0,1];y^*TM)
    \end{align}
given by linearizing $\overline{\partial}_{H,J}$ at $x$ resp. $y$. Note, we have that $D(\overline{\partial}_{H,J})_u$ is asymptotic to $D(\overline{\partial}_{H,J})_x$ resp. $D(\overline{\partial}_{H,J})_y$ at $-\infty$ resp. $+\infty$. Let $\Phi$ be a standard invertible operator,
    \begin{align}
    W^{1,2}(\Theta;\underline{\bbC}^d,\underline{\bbR}^d)&\to L^2(\Theta;\underline{\bbC}^d) \\
    \xi(s,t)&\mapsto\partial_s\xi(s,t)+i\partial_t\xi(s,t)+\lambda\xi(s,t), \nonumber
    \end{align}
and observe that the operator 
    \begin{equation}
    D(\overline{\partial}_{H,J})_u\oplus\Phi:W^{1,2}(\Theta;u^*TM\oplus\underline{\bbC}^d,u^*TL_i\oplus\underline{\bbR}^d)\to L^2(\Theta;u^*TM\oplus\underline{\bbC}^d)
    \end{equation}
has the same kernel/cokernel as $D(\overline{\partial}_{H,J})_u$. We will abuse notation and also denote by $\Phi$ the standard invertible operator 
    \begin{align}
    W^{1,2}([0,1];\underline{\bbC}^d,\underline{\bbR}^d)&\to L^2([0,1];\underline{\bbC}^d) \\
    \xi(t)&\mapsto i\partial_t\xi(t)+\lambda\xi(t). \nonumber
    \end{align}
Again, we have that $D(\overline{\partial}_{H,J})_u\oplus\Phi$ is asymptotic to $D(\overline{\partial}_{H,J})_x\oplus\Phi$ resp. $D(\overline{\partial}_{H,J})_y\oplus\Phi$ at $-\infty$ resp. $+\infty$. Moreover, we will abuse notation and denote by $J$ the almost complex structure on $u^*TM\oplus\underline{\bbC}^d$ given as the direct sum of our chosen admissible $J$ and the standard almost complex structure on $\underline{\bbC}^d$.

\begin{defin}\label{defin:floerabstractcap}
A \emph{Floer abstract cap} for $x$ consists of the following data. 
\begin{enumerate}
\item A connection $\nabla$ on $x^*TM\oplus\underline{\bbC}^d$. Note, such a choice induces an identification
    \begin{equation}
    (x^*TM\oplus\underline{\bbC}^d,x^*TL_i\oplus\underline{\bbR}^d)\cong(\underline{\bbC}^{n+d},\underline{\bbR}^{n+d})
    \end{equation}
    together with an identification of $D(\overline{\partial}_{H,J})_x\oplus\Phi$ with the operator 
    \begin{align}
    W^{1,2}([0,1];\underline{\bbC}^{n+d},\underline{\bbR}^{n+d})&\to L^2([0,1];\underline{\bbC}^{n+d}) \\
    \xi(t)&\mapsto i\partial_t\xi(t)+\nabla\grad H_t\big(x(t)\big)\oplus\identity. \nonumber
    \end{align}
\item An operator of the form 
    \begin{align}
    T_{\frakF,x}:W^{1,2}(\Theta;\underline{\bbC}^{n+d},\underline{\bbR}^{n+d})&\to L^2(\Theta;\underline{\bbC}^{n+d}) \label{eq:form2} \\
    \xi(s,t)&\mapsto\partial_s\xi(s,t)+i\partial_t\xi(s,t)+\calT_{\frakF,x}(s,t)\xi(s,t), \nonumber
    \end{align}
where $\calT_{\frakF,x}(s,t)\in\operatorname{End}(\bbC^{n+d})$ are symmetric matrices that satisfy
    \begin{equation}\label{eqn:form2'}
    \lim_{s\to-\infty}\calT_{\frakF,x}(s,t)=\nabla\grad H_t\big(x(t)\big)\oplus\identity\;\;\textrm{and}\;\;\lim_{s\to+\infty}\calT_{\frakF,x}(s,t)=\lambda\cdot\identity.
    \end{equation}
\end{enumerate}
\end{defin}

\begin{rem}
Let $\calT^*_{\frakF,x}(s,t)\in\operatorname{End}(\bbC^{n+d})$ satisfy
    \begin{equation}
    \calT^*_{\frakF,x}(s,t)\equiv\calT_{\frakF,x}(-s,t).
    \end{equation}
A straightforward operator gluing argument shows the operator
    \begin{align}
    T^*_{\frakF,x}:W^{1,2}(\Theta;\underline{\bbC}^{n+d},\underline{\bbR}^{n+d})&\to L^2(\Theta;\underline{\bbC}^{n+d}) \\
    \xi(s,t)&\mapsto\partial_s\xi(s,t)+i\partial_t\xi(s,t)+\calT^*_{\frakF,x}(s,t)\xi(s,t) \nonumber
    \end{align}
has index bundle satisfying 
    \begin{equation}\label{eqn:dualoperator2}
    \ind T^*_{\frakF,x}\cong-\ind T_{\frakF,x}.
    \end{equation}
\end{rem}

By examining the spectral flow, we see any Floer abstract cap for $x$ is a Fredholm operator of index $\mu(x)$. Moreover, we see the space of Floer abstract caps for $x$ is contractible since (1) the space of connections is contractible and (2) the space of operators of the form \eqref{eq:form2} with fixed asymptotics is contractible (i.e., the space of families of symmetric matrices with asymptotics \eqref{eqn:form2'} is contractible).

Consider the glued together Fredholm operator
    \begin{equation}
    T_{\frakF,x,y,u}\equiv T^*_{\frakF,x}\# D(\overline{\partial}_{H,J})_u\oplus\Phi\# T_{\frakF,y}.
    \end{equation}
Under standard operator gluing, the index bundle is additive:
    \begin{equation}\label{eq:floergluing}
    \ind T_{\frakF,x,y,u}\cong\ind T^*_{\frakF,x}+\ind\big(D(\overline{\partial}_{H,J})_u\oplus\Phi\big)+\ind T_{\frakF,y}.
    \end{equation}

We claim $T_{\frakF,x,y,u}$ can be deformed to an invertible operator in a way that only depends on $\Lambda$. Using the identification
    \begin{equation}
    TM\oplus\underline{\bbC}^d\cong\Lambda\otimes_\bbR\underline{\bbC}
    \end{equation}
together with the homotopy between $\Lambda\vert_{L_i}$ and $TL_i\oplus\underline{\bbR}^d$ through totally real subbundles, we may deform the boundary conditions of $T_{\frakF,x,y,u}$:
    \begin{equation}
    T_{\frakF,x,y,u}:W^{1,2}(\Theta;u^*(\Lambda\otimes_\bbR\underline{\bbC}),u^*\Lambda\vert_{\bbR\times\{i\}})\to L^2(\Theta;u^*(\Lambda\otimes_\bbR\underline{\bbC})).
    \end{equation}
Note, deforming the boundary conditions does not affect the index bundle. By choosing a connection $\nabla$ on $u^*\Lambda\vert_{\bbR\times\{0\}}$, complexifying it to a connection $\nabla^\bbC$ on $u^*(\Lambda\otimes_\bbR\underline{\bbC})\vert_{\bbR\times\{0\}}$, and parallel transporting a suitable frame in the $t$-direction; we have an identification of $T_{\frakF,x,y,u}$ with an operator of the form
    \begin{align}
    W^{1,2}(\Theta;u^*(\Lambda\otimes_\bbR\underline{\bbC}),u^*\Lambda\vert_{\bbR\times\{0\}})&\to L^2(\Theta;u^*(\Lambda\otimes_\bbR\underline{\bbC})) \label{eq:form3} \\ 
    \xi(s,t)&\mapsto\nabla^\bbC_s\xi(s,t)+i\partial_t\xi(s,t)+A(s,t)\xi(s,t), \nonumber
    \end{align}
where $A(s,t)\in\operatorname{End}\big(u^*(\Lambda\otimes_\bbR\underline{\bbC})\vert_{(s,t)}\big)$ are symmetric matrices that satisfy
    \begin{equation}
    \lim_{s\to\pm\infty}A(s,t)=\lambda\cdot\identity.
    \end{equation}
Finally, since the space of operators of the form \eqref{eq:form3} with fixed asymptotics is contractible, we may identify $T_{\frakF,x,y,u}$ with the operator
    \begin{align}
    W^{1,2}(\Theta;u^*(\Lambda\otimes_\bbR\underline{\bbC}),u^*\Lambda\vert_{\bbR\times\{0\}})&\to L^2(\Theta;u^*(\Lambda\otimes_\bbR\underline{\bbC})) \\ 
    \xi(s,t)&\mapsto\nabla^\bbC_s\xi(s,t)+i\partial_t\xi(s,t)+\lambda\cdot\identity; \nonumber
    \end{align}
the claim follows by Lemma \ref{lem:standardfloeroperator1}.

Now, we have an equivalence of real virtual vector spaces:
    \begin{equation}\label{eq:discussion2}
    \ind T_{\frakF,x}\cong\ind\big(D(\overline{\partial}_{H,J})_u\oplus\Phi\big)\oplus\ind T_{\frakF,y}.
    \end{equation}
Since all choices live in contractible spaces (except for the fixed global choice of $\Lambda$), we can upgrade this equivalence to families of operators. In particular, consider the fiber bundle $\fred\to\widehat{\calF}^{H,J}_{L_0,L_1}(x,y)$, with fiber 
    \begin{equation}
    \fred\vert_u\equiv\fred\big(W^{1,2}(\Theta;u^*TM\oplus\underline{\bbC}^d,u^*TL_i\oplus\underline{\bbR}^d),L^2(\Theta;u^*TM\oplus\underline{\bbC}^d)\big),
    \end{equation}
together with the section 
    \begin{align}
    D\overline{\partial}_{H,J}:\widehat{\calF}^{H,J}_{L_0,L_1}(x,y)&\to\fred \\
    u&\mapsto D(\overline{\partial}_{H,J})_u\oplus\Phi. \nonumber
    \end{align}
By the discussion in Subsection \ref{subsec:floercohomology}, we have an isomorphism of real vector bundles 
    \begin{equation}
    T\widehat{\calF}^{H,J}_{L_0,L_1}(x,y)\cong\ind D\overline{\partial}_{H,J}.
    \end{equation}
Saying \eqref{eq:discussion2} extends to families is equivalent to saying we have constructed the following equivalence of real virtual bundles:
    \begin{equation}
    T\widehat{\calF}^{H,J}_{L_0,L_1}(x,y)\oplus\underline{\bbR}^{\mu(y)}\cong\underline{\bbR}^{\mu(x)},
    \end{equation}
where we have used the canonical isomorphisms
    \begin{equation}
    \ind T_{\frakF,x}\cong\underline{\bbR}^{\mu(x)}\;\;\textrm{and}\;\;\ind T_{\frakF,y}\cong\underline{\bbR}^{\mu(y)}.
    \end{equation}
Finally, by using the isomorphism of real vector bundles
    \begin{equation}
    T\calF^{H,J}_{L_0,L_1}(x,y)\oplus\underline{\bbR}\cong T\widehat{\calF}^{H,J}_{L_0,L_1}(x,y)
    \end{equation}
and the homotopy equivalence 
    \begin{equation}
    \bbF^{H,J}_{L_0,L_1}(x,y)\simeq\calF^{H,J}_{L_0,L_1}(x,y),
    \end{equation}
we have the following equivalence of real virtual bundles:
    \begin{equation}\label{eq:floerframing}
    T\bbF^{H,J}_{L_0,L_1}(x,y)\oplus\underline{\bbR}^{\mu(y)}\oplus\underline{\bbR}\cong\underline{\bbR}^{\mu(x)}.
    \end{equation}

It remains to show \eqref{eq:floerframing} is compatible with gluing of broken Floer trajectories, but this follows from (1) operator gluing and (2) \eqref{eqn:dualoperator2}. Thus, we have a lift of $\bbF^{H,J}_{L_0,L_1}$ to a framed flow category $\bbF^{H,J,\Lambda}_{L_0,L_1}$.

\section{From Floer to Morse}
\subsection{Original correspondence}
Let $X$ be a (not necessarily closed) smooth $n$-manifold, $f\in C^\infty(X)$ a smooth function, and $g$ a Riemannian metric on $X$. We may assume $(f,g)$ is Morse-Smale; moreover, we will assume the maximal invariant subset of $f$ is isolated. We denote by $\pi$ the projection 
    \begin{equation}
    T^*X\to X;
    \end{equation}
moreover, we denote by $\calO_X$ the image of the 0-section embedding $X\hookrightarrow T^*X$. Let $\omega_\std$ be the standard symplectic structure on $T^*X$. We will take the classical approach of Floer \cite{Flo89c} to define the Lagrangian Floer cohomology of $(\calO_X,\calO_X)$.

Consider the Hamiltonian 
    \begin{equation}
    H^f\equiv -f\circ\pi\in C^\infty(T^*X).
    \end{equation}
Observe, the flow of $X_{H^f}$ is tangential to the fibers. In particular, we have a 1-1 correspondence 
    \begin{equation}
    \chi(\calO_X,\calO_X;H^f)\cong\crit(f).
    \end{equation}
There is a natural choice of ($\omega_\std$-compatible) almost complex structure $J^g$ on $T^*X$; this is determined by the splitting
    \begin{equation}\label{eq:splitting}
    T(T^*X)\cong\pi^*TX\otimes_{\bbR}\underline{\bbC}
    \end{equation}
induced by $\nabla^g$.

\begin{thm}[Theorem 1 \cite{Flo89c}]
Suppose $f$ is sufficiently $C^2$-small, then, for any two $x,y\in\crit(f)$, the map
    \begin{align}
    \widehat{\calF}^{H^f,J^g}_{\calO_X,\calO_X}(x,y)&\to\widehat{\calM}^{f,g}_X(x,y) \label{eq:bijection1}\\
    u(s,t)&\mapsto\gamma(s)\equiv u(s,0) \nonumber
    \end{align}
is a bijection.
\end{thm}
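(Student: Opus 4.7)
The plan is to prove the bijection in three stages: identify $t$-independent solutions on $\calO_X$ with gradient flow lines, use the smallness of $f$ to force every Floer strip into this form, and then read off the bijection via an implicit function theorem argument.

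First I would carry out the direct computation. In Darboux coordinates $(q,p)$ on $T^*X$, one has $X_{H^f} = \sum_i (\partial_{q_i} f)\partial_{p_i}$, which along $\calO_X$ is the vertical lift of $\grad f$. Under the splitting \eqref{eq:splitting}, $J^g$ exchanges horizontal and vertical and sends this vertical vector to the horizontal vector $-\grad f$. Substituting the ansatz $u(s,t) = (\gamma(s),0)$ into the Floer equation $\partial_s u + J^g(\partial_t u - X_{H^f}(u)) = 0$ collapses it to $\dot\gamma + \grad f(\gamma) = 0$, which is precisely the negative gradient flow equation. In particular, for every $\gamma \in \hat{\calM}^{f,g}_X(x,y)$ the map $u(s,t) \equiv (\gamma(s),0)$ lies in $\hat{\calF}^{H^f,J^g}_{\calO_X,\calO_X}(x,y)$ and has $u(\cdot,0) = \gamma$, giving surjectivity at once.

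For injectivity I would show that every Floer strip is of this $t$-independent form once $f$ is sufficiently $C^2$-small. Since $\calO_X$ is exact and each Hamiltonian chord is the constant chord at some $q \in \crit(f)$, the Floer action is $\calA_{H^f}(x_q) = f(q)$; hence the geometric energy of any strip from $x$ to $y$ is bounded by $2\lVert f\rVert_{C^0}$. A standard monotonicity argument near $\calO_X$ then confines $u$ to an arbitrarily small disk bundle around the zero section. In that neighborhood, write $u(s,t) = \exp_{(q(s,t),0)}\bigl(0, \epsilon\, p(s,t)\bigr)$ with $\epsilon \sim \lVert f\rVert_{C^2}$, so that the Floer equation becomes a small perturbation of the decoupled system $\partial_s q = -\grad f(q)$ and $i\partial_t p = 0$ with $p(\cdot,0) = p(\cdot,1) = 0$. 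A Fourier decomposition in $t$ of the type used in Lemma \ref{lem:standardfloeroperator2} shows the nonzero $t$-modes of $p$ are governed by a uniformly invertible operator and vanish in the limit $\epsilon \to 0$, while the zero mode reduces to the Morse linearization treated in Lemma \ref{lem:standardmorseoperator}. Applying the implicit function theorem in a neighborhood of each $t$-independent solution $(\gamma,0)$ then identifies the Floer moduli space locally with $\hat{\calM}^{f,g}_X(x,y)$, realized precisely by the map $u \mapsto u(\cdot,0)$.

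The main obstacle is producing the uniform a priori estimate that localizes every Floer strip onto (a neighborhood of) the zero section as $\lVert f\rVert_{C^2} \to 0$. The energy bound alone only gives $L^2$-smallness of $\partial_s u$, so upgrading this to a pointwise confinement requires combining elliptic bootstrapping on strips with the totally real character of $\calO_X$ and exactness in order to rule out vertical bubbling and persistent $t$-oscillation. This is the core content of Floer's argument in \cite{Flo89c}; once it is in place, the remaining Fourier and implicit function theorem steps are essentially formal.
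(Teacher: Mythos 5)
The paper does not prove this statement; it cites it as Theorem~1 of \cite{Flo89c} and immediately uses it as a black box. So there is no in-paper proof to compare against, and your proposal should be judged as a reconstruction of Floer's argument.

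Your overall architecture is the right one and agrees with Floer's: the ansatz check gives surjectivity for free, and the substantive work is the a priori estimate pinning Floer strips to the zero section plus an implicit-function-theorem/adiabatic-limit argument to show the $t$-independent solutions exhaust the moduli space. You are also candid about which step is hard, and you locate it correctly: the monotonicity/confinement estimate is precisely the core of Floer's contribution in \cite{Flo89c}.

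However, the ``decoupled system'' you write down is not the correct leading-order approximation, and this is not purely cosmetic. Writing $u=(q,p)$ near $\calO_X$ with $J^g$ exchanging horizontal and vertical, the Floer equation reads
\begin{equation*}
\partial_s q = \partial_t p - \grad f(q), \qquad \partial_s p = -\partial_t q,
\end{equation*}
equivalently $(\partial_s + i\partial_t)(q+ip) = -\grad f(q)$. The $q$- and $p$-components are coupled at order one through the $\partial_t p$ and $\partial_t q$ terms; they do not decouple into $\partial_s q=-\grad f(q)$ and $i\partial_t p = 0$. In particular, your route to killing the nonzero Fourier modes of $p$ cannot go through a scalar operator of the form $i\partial_t$ — the relevant operator is the full linearized Cauchy--Riemann operator on the strip with Lagrangian boundary conditions (as in Lemma~\ref{lem:standardfloermorseoperator1}/\ref{lem:standardfloermorseoperator2} of the paper), whose $t$-dependent modes are controlled by the boundary condition together with the smallness of $f$, not by decoupling. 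The correct mechanism is that, after the energy bound and localization near $\calO_X$, the linearization at each $t$-independent solution has kernel/cokernel identified with those of the Morse linearization (Corollary~\ref{cor:regular} in the paper records exactly this), and a quantitative implicit function theorem plus a uniqueness/no-extra-components argument then gives the bijection. Your sketch reaches the same endpoint but via an incorrect local normal form; replacing the claimed decoupling with the actual coupled CR system and invoking the lemma on its invertibility would close this part. The remaining gap — the uniform confinement of strips to a shrinking neighborhood of the zero section — is, as you note, where the genuine analytic content lies, and your proposal leaves it open.
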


Floer's result is enough to define the Lagrangian Floer cohomology of $(\calO_X,\calO_X)$ and identify it with the Morse homology of $f$, as follows. We see that \eqref{eq:bijection1} descends to the $\bbR$-quotients; moreover, 
this bijection on the $\bbR$-quotients will straightforwardly extend over the corresponding compactifications:
    \begin{equation}\label{eq:bijection2}
    \bbF^{H^f,J^g}_{\calO_X,\calO_X}(x,y)\cong\bbM^{f,g}_X(x,y).
    \end{equation}
In the following subsection, we will upgrade this to an equivalence between the Floer homotopy type and the Morse homotopy type.

\subsection{Upgraded correspondence}
First, \eqref{eq:splitting} gives a natural choice of framed brane structure for $(\calO_X,\calO_X)$; namely, let $\Lambda_\std$ be defined as the totally real subbundle $\pi^*TX$ and choose the constant homotopy between $\Lambda_\std\vert_{\calO_X}$ and $T\calO_X$. We denote by $\nabla$ the complexification of $\pi^*\nabla^g$ on $T(T^*X)$.

\emph{A priori}, we do not know that $(H^f,J^g)$ is regular. This follows from the following general lemma.

\begin{lem}\label{lem:standardfloermorseoperator1}
Let $F\to\bbR$ be a real vector bundle with connection $\nabla$ and $E\to\Theta$ the complexified vector bundle $E\equiv F\otimes_\bbR\underline{\bbC}$ with complexified connection $\nabla^\bbC$, where we pullback $(F,\nabla)$ to $\Theta$. Assume we have fixed identifications $(E,F)\cong(\underline{\bbC}^{\widetilde{m}},\underline{\bbR}^{\widetilde{m}})$ over $\Theta^\pm_R$, which trivialize $\nabla^\bbC$ resp. $\nabla$, in order to define the operator
    \begin{align}
    S_\Theta:W^{1,2}(\Theta;E,F)&\to L^2(\Theta;E) \\
    \xi(s,t)&\mapsto\nabla^\bbC_s\xi(s,t)+i\partial_t\xi(s,t)+A(s)\xi(s,t), \nonumber
    \end{align}
where $A(s)\in\operatorname{End}(E\vert_{(s,t)})$ are symmetric matrices of sufficiently small norm such that (1) the asymptotics are invertible symmetric matrices of sufficiently small norm and (2) $A(s)(F\vert_{(s,0)})\subset F\vert_{(s,0)}$. Suppose $S_\bbR$ is the operator
    \begin{align}
    S_\bbR:W^{1,2}(\bbR;F)&\to L^2(\bbR;F) \\
    \xi(s)&\mapsto\nabla_s\xi(s)+A(s)\vert_{F\vert_{(s,0)}}\xi(s),\nonumber
    \end{align}
then we have an isomorphism of real vector spaces 
    \begin{equation}
    \ker S_\theta\cong\ker S_\bbR\;\;\textrm{and}\;\;\coker S_\theta\cong\coker S_\bbR.
    \end{equation}
\end{lem}

\begin{proof}
We use an approach analogous to the proof of Lemma \ref{lem:standardfloeroperator1}. Since $\Theta$ is contractible, we may choose a unitary trivialization $\Psi:E\xrightarrow{\sim}\underline{\bbC}^{\widetilde{m}}$, which agrees with our fixed identifications over $\Theta^\pm_R$, together with an identification of $S_\Theta$ with the operator
    \begin{align}
    S_\Theta:W^{1,2}(\Theta;\underline{\bbC}^{\widetilde{m}},\Psi(F))&\to L^2(\Theta;\underline{\bbC}^{\widetilde{m}}) \\
    \xi(s,t)&\mapsto\partial_s\xi(s,t)+i\partial_t\xi(s,t)+A(s)\xi(s,t). \nonumber
    \end{align}
Consider the decompositions of Hilbert spaces 
    \begin{align}
    W^{1,2}(\Theta;\underline{\bbC}^{\widetilde{m}},\Psi(F))&\cong\bigoplus_{j\in\bbZ}W^{1,2}(\bbR;\Psi(F)), \\
    L^2(\Theta;\underline{\bbC}^{\widetilde{m}})&\cong\bigoplus_{j\in\bbZ}L^2(\bbR;\Psi(F)),
    \end{align}
given by taking a Fourier series in the $t$ variable, i.e., 
    \begin{equation}
    \xi(s,t)\mapsto\sum_{j\in\bbZ}e^{\pi ijt}\xi_j(s).
    \end{equation}
The aforementioned decompositions induce a splitting of $S_\Theta$ into a direct sum of operators:
    \begin{equation}
    S_\Theta\xi(s,t)=\sum_{j\in\bbZ}e^{\pi ijt}S_j\xi(s);
    \end{equation}
here, $S_j$ is the operator 
    \begin{align}
    S_j:W^{1,2}(\bbR;\Psi(F))&\to L^2(\bbR;\Psi(F)) \\
    \xi_j(s)&\mapsto\partial_s\xi_j(s)+\big(A(s)-\pi j\big)\xi_j(s). \nonumber
    \end{align}
Observe, $S_0=S_\bbR$. Suppose for contradiction that there exists a non-zero $\xi_j\in\ker S_j$, where $j\neq0$. We may compute:
    \begin{equation}
    \partial_s\big\langle\xi_j(s),\xi_j(s)\big\rangle=2\big\langle\xi_j(s),\partial_s\xi_j(s)\big\rangle=2\pi j\big\langle\xi_j(s),\xi_j(s)\big\rangle-2\big\langle\xi_j(s),A(s)\xi_j(s)\big\rangle,
    \end{equation}
where $\brak{\cdot,\cdot}$ is the inner product on $\Psi(F)$. We have that 
    \begin{equation}
    \big\lvert\big\langle\xi_j(s),A(s)\xi_j(s)\big\rangle\big\rvert\leq\lvert\lvert A(s)\lvert\lvert\cdot\rvert\rvert\xi_j(s)\rvert\rvert^2.
    \end{equation}
It follows that, since $A(s)$ has sufficiently small norm, $\partial_s\big\langle\xi_j(s),\xi_j(s)\big\rangle$ is strictly positive resp. negative for $j>0$ resp. $j<0$. But, this contradicts the assumption that $\xi_j$ is $W^{1,2}$, hence $\ker S_j=0$ for $j\neq0$; this shows 
    \begin{equation}
    \ker S_\theta\cong\ker S_\bbR.
    \end{equation}
In order to prove
    \begin{equation}
    \coker S_\theta\cong\coker S_\bbR,
    \end{equation}
we simply repeat the argument just performed for the formal adjoint of $S_\Theta$,
    \begin{align}
    S^*_\Theta:W^{1,2}(\Theta;E,F)&\to L^2(\Theta;E) \\
    \xi(s,t)&\mapsto-\nabla^\bbC_s\xi(s,t)+i\partial_t\xi(s,t)+A(s)\xi(s,t), \nonumber
    \end{align}
which satisfies 
    \begin{equation}
    \ker S^*_\Theta=\coker S_\Theta.
    \end{equation}
This proves the lemma.
\end{proof}

\begin{cor}\label{cor:regular}
We have an isomorphism of real vector spaces
    \begin{equation}
    \ker D(\overline{\partial}_{H^f,J^g})_u\cong\ker D(\Xi_{f,g})_\gamma\;\;\textrm{and}\;\;\coker D(\overline{\partial}_{H^f,J^g})_u\cong\coker D(\Xi_{f,g})_\gamma,
    \end{equation}
where $\gamma(s)\equiv u(s,0)$. In particular, $(H^f,J^g)$ is regular if and only if $(f,g)$ is Morse-Smale.
\end{cor}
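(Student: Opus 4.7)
The plan is to recognize $D(\overline{\partial}_{H^f,J^g})_u$ as an instance of the operator $S_\Theta$ in Lemma \ref{lem:standardfloermorseoperator1} and $D(\Xi_{f,g})_\gamma$ as the corresponding $S_\bbR$. Once this identification is made the isomorphisms on kernel and cokernel are immediate, and the ``in particular'' statement follows since both operators are Fredholm of the same index $I(x)-I(y)=\mu(x)-\mu(y)$.

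First I would set up trivializations. Because $f$ is $C^2$-small, the previously invoked theorem of Floer forces every Floer trajectory $u$ to lie in a small neighborhood of the zero section, on which the splitting \eqref{eq:splitting} applies. Pulling this splitting back along $u$ yields a unitary identification $u^*T(T^*X)\cong\gamma^*TX\otimes_\bbR\underline{\bbC}$ sending the totally real subbundle $u^*T\calO_X\vert_{\bbR\times\{i\}}$ to the real subbundle $F\equiv\gamma^*TX$. Let $\nabla$ be the pullback of $\nabla^g$ along $\gamma$ and $\nabla^\bbC$ its complexification; these play the roles of the connections in Lemma \ref{lem:standardfloermorseoperator1}.

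Second I would rewrite the linearization. In these trivializations $D(\overline{\partial}_{H^f,J^g})_u$ takes the form
\begin{equation}
\xi(s,t)\mapsto\nabla^\bbC_s\xi(s,t)+i\partial_t\xi(s,t)+A(s,t)\xi(s,t),
\end{equation}
where the symmetric endomorphism $A(s,t)$ encodes the Hessian of $H^f$ at $u(s,t)$. The crucial computation is that, because $H^f=-f\circ\pi$ is pulled back from the base and $J^g$ is adapted to the horizontal-vertical decomposition induced by $\nabla^g$, the tensor $A$ is actually independent of $t$, preserves $F$, and its restriction $A(s)\vert_F$ equals the Hessian $\nabla^g\grad f(\gamma(s))$. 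The asymptotics $\lim_{s\to-\infty}A(s)=\nabla^g\grad f(x)$ and $\lim_{s\to\infty}A(s)=\nabla^g\grad f(y)$ are invertible by non-degeneracy of the critical points and have small norm since $f$ is $C^2$-small, so the hypotheses of Lemma \ref{lem:standardfloermorseoperator1} are met.

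Finally I would invoke the lemma: the operator $S_\Theta$ it produces is exactly $D(\overline{\partial}_{H^f,J^g})_u$, while the corresponding $S_\bbR=\nabla_s+A(s)\vert_F=\nabla^g_s+\nabla^g\grad f(\gamma)$ is exactly $D(\Xi_{f,g})_\gamma$, and the lemma delivers the claimed isomorphisms on kernel and cokernel. Regularity of $(H^f,J^g)$ means the Fredholm operators $D(\overline{\partial}_{H^f,J^g})_u$ are surjective for all $u$, which by the cokernel isomorphism is equivalent to surjectivity of $D(\Xi_{f,g})_\gamma$ for every gradient flow $\gamma$, i.e., the Morse-Smale condition. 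The main technical obstacle is the verification in the second step that $A$ is $t$-independent and block-diagonal with respect to the horizontal-vertical splitting; this is a direct but careful computation using the compatibility of $J^g$ and $\nabla^g$ with the splitting \eqref{eq:splitting} together with the fact that $X_{H^f}$ is tangent to the fibers.
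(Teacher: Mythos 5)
Your proposal is correct and follows the same route the paper intends: instantiate Lemma \ref{lem:standardfloermorseoperator1} with $S_\Theta = D(\overline{\partial}_{H^f,J^g})_u$ and $S_\bbR = D(\Xi_{f,g})_\gamma$ (after trivializing via the splitting \eqref{eq:splitting} and checking the zeroth-order term is $t$-independent, preserves the real subbundle, and restricts to the Hessian $\nabla^g\grad f$). The paper presents the corollary as an immediate consequence of that lemma; you have simply spelled out the verification steps, including the key fact that $\pi\circ u(s,t)=\gamma(s)$ so the Hessian term is genuinely $t$-independent.
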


\cite[Theorem 6.12]{Lar21} shows, in the Liouville manifold case, that the standard gluing map of broken Floer trajectories endows the compactified Floer moduli spaces with the structure of a smooth manifold with corners. The proof of this result only relies on exactness of the pair of Lagrangians and regularity of the Floer data; hence, it is seen to extend to the case considered here (namely, when $X$ is open). Moreover, it was shown by Large (cf. \cite[Proposition 7.3, Proposition 7.4]{Lar21}) that the various index bundles given by linearizing the Floer equation at various Floer trajectories, and the gluings of these index bundles over the various strata of the compactified Floer moduli spaces, determine the same real vector bundles as the tangent bundles of the compactified Floer moduli spaces given by the smooth manifold with corners structures. Again, the proof of this result is seen to extend to the case considered here. Corollary \ref{cor:regular} shows \eqref{eq:bijection1} identifies the index bundles of the corresponding linearized operators:
    \begin{equation}
    \ind D(\overline{\partial}_{H^f,J^g})_u\cong\ind D(\Xi_{f,g})_\gamma,\;\; \gamma(s)=u(s,0);
    \end{equation}
moreover, this identification is compatible with the $\bbR$-actions and gluing maps. The upshot is the following result. 

\begin{cor}\label{cor:stablediffeo}
\eqref{eq:bijection2} determines a stable diffeomorphism:
    \begin{equation}
    \bbR^N\times\bbF^{H^f,J^g}_{\calO_X,\calO_X}(x,y)\cong\bbR^N\times\bbM^{f,g}_X(x,y),\;\;N\gg0.
    \end{equation}
\end{cor}

\begin{proof}
We already have that \eqref{eq:bijection2} determines a homeomorphism of compact topological manifolds with corners. Since the tangent microbundle of $\bbF^{H^f,J^g}_{\calO_X,\calO_X}(x,y)$ admits a vector bundle lift via $\ind D\overline{\partial}_{H^f,J^g}$ and the tangent microbundle of $\bbM^{f,g}_X(x,y)$ admits a vector bundle lift via $\ind D\Xi_{f,g}$, and we have just seen how the vector bundle lifts are compatible via directly comparing the Floer and Morse linearized operators, this corollary follows by general smoothing theory.
\end{proof}

\begin{rem}
The reason for appealing to general smoothing theory is so that we do not have to directly compare the construction of charts for smooth structures on the Floer and Morse side. Moreover, since Abouzaid-Blumberg's framework (1) already incorporates flow categories whose morphism spaces admit global Kuranishi chart presentations and (2) declares global Kuranishi chart presentations to be equivalent if one can be obtained from the other via stabilization, the stable diffeomorphism of Corollary \ref{cor:stablediffeo} is already good enough to identify the unstructured Floer and Morse categories; cf. also \cite[Lemma 4.3]{PS25a}.
\end{rem}

It remains to show the Floer framing (i.e., \eqref{eq:floerframing} determined by $\Lambda_\std$) and the Morse framing (i.e., \eqref{eq:morseframing}) are identified via \eqref{eq:bijection2}. Recall, $\nabla$ is the complexification of $\pi^*\nabla^g$. A Floer abstract cap for $x$ (in this situation) is an operator of the form 
    \begin{align}
    T_{\frakF,x}:W^{1,2}(\Theta;\underline{\bbC}^n,\underline{\bbR}^n)&\to L^2(\Theta;\underline{\bbC}^n) \\
	\xi(s,t)&\mapsto\partial_s\xi(s,t)+i\partial_t\xi(s,t)+\calT_{\frakF,x}(s,t)\xi(s,t),\nonumber
    \end{align}
where we use the identification $(x^*T(T^*X),x^*T\calO_X)\cong(\underline{\bbC}^n,\underline{\bbR}^n)$ induced by $\nabla$ and $\calT_{\frakF,x}(s,t)\in\operatorname{End}(\bbC^n)$ are symmetric matrices that satisfy
    \begin{equation}
    \lim_{s\to-\infty}\calT_{\frakF,x}(s)=\nabla\grad H^f\big(x(t)\big)\;\;\textrm{and}\;\;\lim_{s\to+\infty}\calT_{\frakF,x}(s)=\lambda\cdot\identity.
    \end{equation}
We may choose $\calT_{\frakF,x}(s,t)$ so that (1) it is $t$-independent, i.e., $\calT_{\frakF,x}(s,t)=\calT_{\frakF,x}(s,t')$ for all $t,t'\in[0,1]$, and (2) $\calT_{\frakM,x}(s)\equiv\calT_{\frakF,x}(s,0)\vert_{\bbR^n}$ gives a Morse abstract cap for $x$. In this way, choosing a Morse abstract cap for $x$ determines a Floer abstract cap for $x$. Combining this observation with Lemma \ref{lem:standardfloermorseoperator1} gives the following result.

\begin{cor}
We have an isomorphism of real vector spaces: 
    \begin{equation}
    \ker T_{\frakF,x}\cong\ker T_{\frakM,x}\;\;\textrm{and}\;\;\coker T_{\frakF,x}\cong\coker T_{\frakM,x}.
    \end{equation}
\end{cor}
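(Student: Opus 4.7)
The plan is to recognize the corollary as a direct instance of Lemma \ref{lem:standardfloermorseoperator2}, once the Floer and Morse abstract caps are matched up so that $T_{\frakF,x}$ has exactly the form $S_\Theta$ of that lemma and $T_{\frakM,x}$ appears as its one-dimensional restriction $S_\bbR$.

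First I would fix any Morse abstract cap $T_{\frakM,x}$, determined by symmetric matrices $\calT_{\frakM,x}(s)\in\operatorname{End}(\bbR^n)$ with the required asymptotics $\nabla^g\grad f(x)$ at $-\infty$ and $\lambda\cdot\identity$ at $+\infty$. I would then promote it to a Floer abstract cap by complexifying fiberwise: set $\calT_{\frakF,x}(s,t)\equiv\calT_{\frakM,x}(s)\otimes_\bbR\identity_\bbC\in\operatorname{End}(\bbC^n)$. By construction this is $t$-independent, symmetric, preserves the totally real subspace $\bbR^n\subset\bbC^n$ along $\bbR\times\{0\}$, and has asymptotic $\lambda\cdot\identity$ at $+\infty$. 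What remains for the Floer cap axiom is to identify the $-\infty$ asymptotic $\nabla\grad H^f(x)$ with the complexification of $\nabla^g\grad f(x)$ under the splitting \eqref{eq:splitting}; this is the one step of genuine content, and I would discharge it by a short local computation in Darboux coordinates adapted to the splitting, using that $H^f=-f(\pi)$ so that the Hamiltonian vector field is tangential to the cotangent fibers and its linearization along $\calO_X$ is the fiberwise complexification of the Morse Hessian.

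Having a matched pair of caps, I would next take $F=\underline{\bbR}^n$, $E=F\otimes_\bbR\underline{\bbC}=\underline{\bbC}^n$, and $A(s)\equiv\calT_{\frakF,x}(s)$, so that $T_{\frakF,x}$ becomes precisely the operator $S_\Theta$ appearing in Lemma \ref{lem:standardfloermorseoperator2}. The lemma's remaining hypotheses---symmetric matrices of sufficiently small norm, invertible asymptotics of small norm, and preservation of $F\vert_{(s,0)}$---hold either by construction or by the standing assumption that $f$ is $C^2$-small enough for Floer's correspondence to apply. By our matching choice of $\calT_{\frakF,x}$, the associated one-dimensional operator $S_\bbR:\xi(s)\mapsto A(s)\vert_{F\vert_{(s,0)}}\xi(s)$ is exactly $T_{\frakM,x}$.

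Invoking Lemma \ref{lem:standardfloermorseoperator2} then delivers both isomorphisms $\ker T_{\frakF,x}\cong\ker T_{\frakM,x}$ and $\coker T_{\frakF,x}\cong\coker T_{\frakM,x}$ simultaneously. The main (and actually rather mild) obstacle in this plan is the geometric compatibility in the second paragraph: one must keep careful track of the natural complex structure on $T(T^*X)$ induced by $\nabla^g$ in order to certify that the complexified Morse Hessian agrees with the linearization of $X_{H^f}$ along $\calO_X$, since everything else reduces to the bookkeeping of plugging the matched pair of caps into the standard invertibility/Fourier-decomposition argument of Lemma \ref{lem:standardfloermorseoperator2}.
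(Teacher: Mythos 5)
Your proposal is correct and is essentially the paper's own argument: the paper likewise takes a Morse abstract cap, chooses the $t$-independent Floer abstract cap whose restriction to $\bbR^n$ along $\bbR\times\{0\}$ recovers it (which, given $t$-independence and preservation of the real form, is precisely the fiberwise complexification you describe), and then invokes Lemma \ref{lem:standardfloermorseoperator2}. The only difference is that you explicitly isolate the compatibility of the $-\infty$ asymptotics---that $\nabla\grad H^f(x)$ is the complexification of $\nabla^g\grad f(x)$ under the splitting \eqref{eq:splitting}---whereas the paper leaves this implicit in the phrase ``choosing a Morse abstract cap for $x$ determines a Floer abstract cap for $x$.''
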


\begin{cor}
\eqref{eq:bijection2} determines an identification between the Floer framing (determined by $\Lambda_\std$) and the Morse framing.
\end{cor}

The culmination of this section is the following result.

\begin{prop}\label{prop:floermorsecorrespondence}
We have the following identification of framed flow categories: 
    \begin{equation}
    \bbF^{H^f,J^g,\Lambda_\std}_{\calO_X,\calO_X}\cong\bbM^{f,g}_X.
    \end{equation}
\end{prop}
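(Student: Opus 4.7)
The plan is essentially to repackage the preceding results into the data required by Definitions \ref{defin:unstructuredflowcategory} and \ref{defin:framedflowcategory}; the functional-analytic preparation already carries all the content.

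First I would handle the object set and codimension functor. Because $X_{H^f}$ is tangent to the cotangent fibers, a Hamiltonian chord must be stationary on $\calO_X$, giving the canonical bijection $\chi(\calO_X,\calO_X;H^f)\cong\crit(f)$. I would check that the Maslov index of a chord equals the Morse index of the corresponding critical point by a spectral-flow computation: Lemma \ref{lem:standardfloermorseoperator2} identifies the spectral flow of the $t$-independent asymptotic operator on $[0,1]$ with that of the Hessian operator on the real line, and the latter computes the Morse index. Hence $\codim_\frakF$ and $\codim_\frakM$ agree.

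Second, for the morphism spaces and composition, the diffeomorphism \eqref{eq:bijection2} (already upgraded to a diffeomorphism of compact stratified smooth manifolds with corners in the corollary preceding Proposition \ref{prop:floermorsecorrespondence}) supplies the data required in (2) of Definition \ref{defin:unstructuredflowcategory}. Naturality of the restriction map $u(s,t)\mapsto u(s,0)$ with respect to taking limits in $s$ translates broken Floer trajectories into broken gradient flows, so that gluing on the Floer side corresponds to gluing on the Morse side. This gives the inclusions of codimension one corner strata required in (3) and makes the associativity diagram \eqref{eq:unstructureddiagram} on the Floer side inherit commutativity from the Morse side.

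Third, for the framing, I would combine the identification of Floer abstract caps with Morse abstract caps (via Lemma \ref{lem:standardfloermorseoperator2}, again exploiting that $\Lambda_\std$ is built from the splitting $T(T^*X)\cong\pi^*TX\otimes_\bbR\underline{\bbC}$ and the connection $\nabla$) with the family-version equivalence of index bundles over $\hat{\calF}$ and $\hat{\calM}$ from Corollary \ref{cor:regular}. This matches \eqref{eq:floerframing} with \eqref{eq:morseframing} object by object, and the commutativity of \eqref{eq:frameddiagram} follows from additivity of the index bundle under operator gluing, which holds on both sides and is intertwined by the pointwise restriction.

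The main obstacle is coherence of the framings across all corner strata: one must be sure the identification does not merely hold pointwise but assembles into a single isomorphism of stratified virtual bundles compatible with all gluings. Since the spaces of choices (connections, abstract caps, homotopies of the form \eqref{eq:form2}, \eqref{eq:form3}) are all contractible, this coherence is achieved by an inductive choice over the stratification, exactly analogous to the inductive argument used in the proof of Proposition \ref{prop:morsehomotopytype}. With this in place, nothing else needs to be verified: the proposition is then just the assertion that the framed flow category data on each side agree term by term.
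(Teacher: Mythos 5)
Your proposal assembles the same ingredients the paper uses: the object bijection from Floer's theorem, the diffeomorphism \eqref{eq:bijection2} of compactified moduli spaces with its compatibility with gluing, and the matching of the Floer and Morse framings via the correspondence between abstract cap operators and the index-bundle identifications of Corollary \ref{cor:regular} and Lemma \ref{lem:standardfloermorseoperator2}. The paper states the proposition as the culmination of the preceding corollaries without a separate argument, so your reconstruction (including the coherence-over-strata observation, handled by contractibility of the choice spaces) is essentially the intended proof.
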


\section{Steenrod squares on Lagrangian Floer cohomology}
\subsection{Local Floer theory}\label{subsec:localfloer}
Let $(M,\omega=d\theta)$ be a Liouville manifold, $(L_0,L_1')$ a pair of Lagrangians, where $L_i$ is either closed exact or cylindrical at infinity, and $(H,J')$ an admissible pair. Consider the Lagrangian 
    \begin{equation}
    L_1\equiv\phi^{-1}_H(L_1').
    \end{equation}
We have an identification
    \begin{equation}
    \chi(L_0,L_1)\cong\chi(L_0,L_1';H),
    \end{equation}
where $\chi(L_0,L_1)$ is the set of intersection points of $(L_0,L_1)$. Let $J\equiv\{J_t\}_{t\in[0,1]}$ be the almost complex structure defined by
    \begin{equation}
    J_t\equiv(\phi^{-t}_H)_*J_t'.
    \end{equation}
We may assume $(H,J')$ is regular for $(L_0,L_1')$, or equivalently, $(L_0,L_1)$ intersects transversely and $J$ is regular for $(L_0,L_1)$. We have an identification of Floer moduli spaces:
    \begin{equation}
    \widehat{\calF}^J_{L_0,L_1}(x,y)\cong\widehat{\calF}^{H,J'}_{L_0,L_1'}(x',y'),
    \end{equation}
where $x,y\in\chi(L_0,L_1)$ correspond to $x',y'\in\chi(L_0,L_1;H)$, respectively. The upshot is that we have an isomorphism of Lagrangian Floer cohomology: 
    \begin{equation}
    HF(L_0,L_1;J;\bbZ/2)\cong HF(L_0,L_1';H,J';\bbZ/2).
    \end{equation}
Finally, if we assume in addition that $\Lambda'$ is a framed brane structure for $(L_0,L_1')$, then $(L_0,L_1)$ has a framed brane structure $\Lambda$ determined in a natural way from $\Lambda'$ and $\phi^{-1}_H$. The upshot is that we have an equivalence of Floer homotopy types:
    \begin{equation}
    \frakF^{J,\Lambda}_{L_0,L_1}\cong\frakF^{H,J',\Lambda'}_{L_0,L_1'}.
    \end{equation}

\begin{rem}
We have just described the equivalence between the non-Hamiltonian perturbed framework and the Hamiltonian perturbed framework for Lagrangian Floer theory. The reason we are belaboring this point is because the constructions in this section are more easily described in the non-Hamiltonian perturbed framework. 
\end{rem}

Now, assume $L_0$ intersects $L_1$ only in a single (possibly degenerate) point $x$. Let $U\subset M$ be a Weinstein neighborhood of $L_0$, i.e., $U$ is symplectomorphic to a neighborhood $V\subset T^*L_0$ of $\calO_{L_0}$. Consider a ball $B_r(x)\subset U$ of sufficiently small radius $r\in\bbR_{>0}$ centered at $x$. We define 
    \begin{equation}
    \widetilde{L}_i\equiv L_i\cap B_r(x).
    \end{equation}
We may choose local coordinates so that there exists a sufficiently $C^2$-small smooth function $f\in C^\infty(\widetilde{L}_0)$ together with an identification of $\widetilde{L}_1$ and
    \begin{equation}
    \big\{(p,df_p)\big\}\subset T^*\widetilde{L}_0.\footnote{The existence of such a smooth function can be shown as follows. We use local coordinates, mapping $x$ to the origin, and a ``partial Morse lemma'' to write $L_1$ locally as $\{(p,df_p)\}$, where $f(x_1,\ldots,x_n)=x^2_1+\cdots+x^2_{n-m}+\widetilde{f}(x_{n-m+1},\ldots,x_n)$, for some $m$ with $1\leq m\leq n$, such that $\widetilde{f}$ has vanishing Hessian at the origin. We now change coordinates so that the quadratic part of $f$ is $C^2$-small.}
    \end{equation}
In particular, $f$ only has a single (possibly degenerate) critical point at $x$ and $S_f=\{x\}$. We will show we can investigate the Lagrangian Floer theory of $(L_0,L_1)$ ``locally''; this sort of idea, usually termed local Floer theory, is not new, cf. \cite{Alb06,Auy23,Flo89b,GG10,Oh96}.

Let $g$ be a Riemannian metric on $L_0$; we abuse notation and also denote by $g$ the restriction of this Riemannian metric to $\widetilde{L}_0$. Recall, we have a natural $\omega_\std$-compatible almost complex structure $J^g$ on $T^*\widetilde{L}_0$ and Hamiltonian $H^f\in C^\infty(T^*\widetilde{L}_0)$. We define
    \begin{equation}
    \widetilde{J}^g_t\equiv(\phi^{-t}_{H^f})_*J^g,\;\;\widetilde{J}^g\equiv\{\widetilde{J}^g_t\}.
    \end{equation}
Perhaps after shrinking $B_r(x)$, we may perturb $J$ so that 
    \begin{equation}
    J_t\vert_{B_r(x)}=\widetilde{J}^g_t\vert_{B_r(x)}.
    \end{equation}
Let $\{r_\nu\}_{\nu\in\bbZ_{\geq0}}\subset(0,r)$ be a strictly decreasing sequence of real numbers converging to 0. We may perturb $(f,g)$, via a $C^2$-small perturbation compactly supported in $L_0\cap B_{r_\nu}(x)$, into a Morse-Smale pair $(f_\nu,g_\nu)$ such that 
\begin{enumerate}
\item $(f_\nu,g_\nu)$ agrees with $(f,g)$ outside of $L_0\cap B_{r_\nu}(x)$,
\item and $\crit(f_\nu)\subset L_0\cap B_{r_\nu}(x)$.
\end{enumerate}
The perturbation of $f$ into $f_\nu$ induces a compactly supported Hamiltonian deformation of $L_1$ into a Lagrangian $L^\nu\subset M$ such that 
    \begin{equation}
    \widetilde{L}^\nu\equiv L^\nu\cap B_r(x)
    \end{equation}
is identified with 
    \begin{equation}
    \Big\{\big(p,d(f_\nu)_p\big)\Big\}\subset T^*\widetilde{L}_0.
    \end{equation}
Meanwhile, the perturbation of $g$ into $g_\nu$ induces a perturbation of $J$ into an almost complex structure $J^\nu\equiv\{J^\nu_t\}$ on $M$ which agrees with $J$ outside of $B_{r_\nu}(x)$ and satisfies 
    \begin{equation}
    J^\nu_t\vert_{B_r(x)}=\widetilde{J}^{g_\nu}_t\vert_{B_r(x)}.
    \end{equation}

For any two $x^\nu,y^\nu\in\chi(L_0,L^\nu)$, we have a natural inclusion: 
    \begin{equation}\label{eq:aux}
    \widehat{\calF}^{\widetilde{J}^{g_\nu}}_{\widetilde{L}_0,\widetilde{L}^\nu}(x^\nu,y^\nu)\hookrightarrow\widehat{\calF}^{J^\nu}_{L_0,L^\nu}(x^\nu,y^\nu).
    \end{equation}

\begin{lem}\label{lem:localfloer}
For $\nu\gg0$, \eqref{eq:aux} is a bijection.
\end{lem}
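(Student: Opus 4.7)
The plan is to argue by contradiction, extracting a Gromov limit of any hypothetical ``escaping'' sequence. Suppose there are a subsequence $\nu_k\to\infty$ and Floer trajectories $u_k\in\hat{\calF}^{J^{\nu_k}}_{L_0,L^{\nu_k}}(x^{\nu_k},y^{\nu_k})$ whose image is not contained in $B_r(x)$. First I would establish that $E(u_k)\to 0$. Exactness lets me fix a primitive $h_0$ of $\theta\vert_{L_0}$ and a continuously varying family of primitives $h^\nu$ of $\theta\vert_{L^\nu}$; locally in $B_r(x)$ the latter is literally $f_\nu$, which converges to $f$. Since all intersection points $x^{\nu_k},y^{\nu_k}$ lie in $B_{r_{\nu_k}}(x)$ with $r_{\nu_k}\to 0$, the actions $\mathcal{A}(x^{\nu_k}),\mathcal{A}(y^{\nu_k})\to\mathcal{A}(x)$, and the standard energy identity $E(u_k)=\mathcal{A}(x^{\nu_k})-\mathcal{A}(y^{\nu_k})$ then forces $E(u_k)\to 0$.

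Next I would translate each $u_k$ in the $s$-coordinate to position its escape at the origin: pick $s_k\in\bbR$ and $t_k\in[0,1]$ with $d(u_k(s_k,t_k),x)\geq r$ and set $\tilde u_k(s,t):=u_k(s+s_k,t)$, passing to a subsequence with $t_k\to t_\infty\in[0,1]$. Three standard ingredients come into play: (i) the maximum principle for contact-type almost complex structures on the Liouville manifold $M$, together with the closed exact or cylindrical-at-infinity Lagrangians $L_0$ and $L^{\nu_k}$, gives a uniform $C^0$ bound on all $\tilde u_k$; (ii) exactness of $\omega$ and of the Lagrangians precludes both sphere and disk bubbling; (iii) $J^{\nu_k}\to J$ and $L^{\nu_k}\to L_1$ in $C^\infty_\loc$. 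Gromov compactness then extracts a $C^\infty_\loc$-limit $u_\infty$ which is a smooth Floer trajectory for $(L_0,L_1)$ with respect to $J$, and $E(u_\infty)\leq\liminf E(\tilde u_k)=0$ forces $u_\infty$ to be constant.

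A constant Floer trajectory has image in $L_0\cap L_1=\{x\}$, so $u_\infty\equiv x$. On the other hand, $C^\infty_\loc$-convergence on $\{0\}\times[0,1]$ together with $t_k\to t_\infty$ and the uniform derivative bounds on $\tilde u_k$ force $\tilde u_k(0,t_\infty)\to x$, while by construction $d(\tilde u_k(0,t_k),x)\geq r$ and $\abs{t_\infty-t_k}\to 0$ give $d(\tilde u_k(0,t_\infty),x)\geq r-o(1)$, the desired contradiction. Hence for $\nu\gg 0$ every trajectory in $\hat{\calF}^{J^\nu}_{L_0,L^\nu}(x^\nu,y^\nu)$ is supported in $B_r(x)$, where $J^\nu$ and $\tilde J^{g_\nu}$ agree and $L^\nu\cap B_r(x)=\tilde L^\nu$; such a trajectory therefore coincides with an element of $\hat{\calF}^{\tilde J^{g_\nu}}_{\tilde L_0,\tilde L^\nu}(x^\nu,y^\nu)$, so \eqref{eq:aux} is surjective.

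The main obstacle will be the careful setup of Gromov compactness in the presence of varying Floer data: ensuring that $C^\infty_\loc$-convergence of $(J^{\nu_k},L^{\nu_k})$ together with the uniform energy bound genuinely produces a limit trajectory for $(L_0,L_1)$ with respect to $J$, and that the exclusion of sphere and disk bubbling is uniform in $k$. These are standard in the Liouville/exact setting but must be checked explicitly, as must the continuous dependence of the primitives $h^\nu$ on $\nu$ that underwrites the energy-to-zero step.
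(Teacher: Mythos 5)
Your proof is correct and takes essentially the same route as the paper's: a contradiction argument via the energy--action identity forcing $E(u_\nu)\to 0$, followed by Gromov compactness to extract a constant limit trajectory at $x$, with the escaping sequence point $(s_\nu,t_\nu)$ producing the contradiction after $s$-translation and passing to a convergent subsequence in $t$. The only difference is that you spell out the standard ingredients (energy identity from primitives, maximum principle, exclusion of bubbling via exactness, $C^\infty_\loc$-convergence of the data) that the paper invokes implicitly.
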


\begin{proof}
Suppose for contradiction the conclusion does not hold, then we may construct a sequence $\{u_\nu\}$ of $J^\nu$-holomorphic strips with Lagrangian boundary conditions on the pair $(L_0,L^\nu)$ connecting $x^\nu$ to $y^\nu$ together with a sequence $\{(s_\nu,t_\nu)\}\subset\Theta$ such that $u(s_\nu,t_\nu)\notin B_r(x)$ for $\nu\gg0$. From the \emph{a priori} energy bound on pseudo-holomorphic strips given by the action, it follows that $E(u_\nu)\to0$ as $\nu\to\infty$, where $E(u_\nu)$ is the energy of $u_\nu$. Therefore, by Gromov compactness, we may assume $\{u_\nu\}$ converges uniformly on compact subsets of $\Theta$ to a $J$-holomorphic strip $u$ with Lagrangian boundary conditions on $(L_0,L_1)$ connecting $x$ to itself. Since $E(u)=0$, $u$ is constant. From the $\bbR$-action given by time-shift in the $s$-coordinate, we may assume $s_\nu=0$ for all $\nu$. Moreover, by passing to a subsequence, we may assume $t_\nu\to t$ as $\nu\to\infty$. By the uniform convergence on compact subsets of $\Theta$, we see
    \begin{equation}
    u_\nu(s_\nu,t_\nu)=u_\nu(0,t_\nu)\to u(0,t)=x 
    \end{equation}
as $\nu\to\infty$; this is the desired contradiction. 
\end{proof}

We know $\widetilde{J}^{g_\nu}$ is regular for $(\widetilde{L}_0,\widetilde{L}^\nu)$. Thus, we have the following result.

\begin{cor}
For $\nu\gg0$ and any two $x^\nu,y^\nu\in\chi(L_0,L^\nu)$, we have a diffeomorphism of compact stratified smooth manifolds with corners:
    \begin{equation}
    \bbF^{\widetilde{J}^{g_\nu}}_{\widetilde{L}_0,\widetilde{L}^\nu}(x^\nu,y^\nu)\cong\bbF^{J^\nu}_{L_0,L^\nu}(x^\nu,y^\nu).
    \end{equation}
Moreover, we have an identification of unstructured flow categories: 
    \begin{equation}
    \bbF^{\widetilde{J}^{g_\nu}}_{\widetilde{L}_0,\widetilde{L}^\nu}\cong\bbF^{J^\nu}_{L_0,L^\nu}.
    \end{equation}
\end{cor}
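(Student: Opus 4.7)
The plan is to upgrade Lemma \ref{lem:localfloer} from a set-theoretic bijection on the unbroken moduli spaces to a diffeomorphism of compact stratified smooth manifolds with corners, and then to assemble these diffeomorphisms into an identification of unstructured flow categories.

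For the diffeomorphism statement, three ingredients are required. First, extend the bijection to the Floer compactifications: since every intersection point of $(L_0,L^\nu)$ lies in $L_0 \cap B_{r_\nu}(x) \subset B_r(x)$, any broken $J^\nu$-Floer trajectory from $x^\nu$ to $y^\nu$ decomposes into unbroken pieces connecting intermediate intersection points, and applying Lemma \ref{lem:localfloer} to each piece produces a bijection of the compactified moduli spaces. Second, transfer regularity and verify smoothness: by Corollary \ref{cor:regular} together with the Morse-Smale condition on $(f_\nu,g_\nu)$, the almost complex structure $\tilde{J}^{g_\nu}$ is regular for $(\tilde{L}_0,\tilde{L}^\nu)$, and since $J^\nu$ agrees with $\tilde{J}^{g_\nu}$ on $B_r(x)$, the linearizations $D(\overline{\partial}_{J^\nu})_u$ and $D(\overline{\partial}_{\tilde{J}^{g_\nu}})_u$ are identified at each Floer trajectory $u$ with image in $B_r(x)$; hence $J^\nu$ is also regular, and \eqref{eq:aux} is a smooth bijection of smooth manifolds of matching dimension, thus a diffeomorphism. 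Third, extend compatibly over the corner strata by invoking \cite[Proposition 7.3, Proposition 7.4]{Lar21}, which shows that the smooth manifold with corners structure on the compactifications is determined by operator gluing of the linearized Floer operators; since these linearizations agree on both sides, the diffeomorphism passes through to the stratified structure.

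For the identification of unstructured flow categories, observe that the object sets $\chi(\tilde{L}_0,\tilde{L}^\nu) = \chi(L_0,L^\nu)$ coincide (all intersection points lie in $B_{r_\nu}(x)$), the morphism spaces are identified as stratified manifolds with corners by the first part, and the composition morphisms, namely the inclusions of codimension-one strata arising from the standard gluing of broken trajectories, are determined on both sides by the same operator gluing data and are therefore preserved. The main obstacle is the uniformity required in the first step: Lemma \ref{lem:localfloer} is stated for fixed pairs $(x^\nu,y^\nu)$ of intersection points and only unbroken strips, so one must verify that for $\nu \gg 0$ its conclusion holds simultaneously across all ordered pairs of intersection points and all breaking configurations. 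Fortunately, the Gromov compactness argument proving Lemma \ref{lem:localfloer} transports verbatim to each unbroken piece of a broken trajectory, and the finiteness of $\chi(L_0,L^\nu)$ at each scale permits a diagonal argument to extract the required uniform $\nu$.
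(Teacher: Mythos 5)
Your proposal is correct and takes essentially the same approach as the paper, which simply notes regularity of $\tilde{J}^{g_\nu}$ (from Corollary \ref{cor:regular} plus the Morse--Smale choice of $(f_\nu,g_\nu)$) and treats the rest as immediate from Lemma \ref{lem:localfloer} and the earlier discussion of Large's manifold-with-corners structure. One small overstatement: the uniformity concern you raise at the end is already built into Lemma \ref{lem:localfloer}, whose proof by contradiction allows the pair $(x^\nu,y^\nu)$ to vary with $\nu$, so no additional diagonal argument is needed.
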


Finally, suppose in addition that $(L_0,L_1)$ has a framed brane structure $\Lambda$, then $(\widetilde{L}_0,\widetilde{L}_1)$ has a framed brane structure $\widetilde{\Lambda}$ determined in a natural way from $\Lambda$ and restriction. Moreover, our Hamiltonian deformation of $L_1$ into $L^\nu$ resp. $\widetilde{L}_1$ into $\widetilde{L}^\nu$ induces a framed brane structure $\Lambda^\nu$ resp. $\widetilde{\Lambda}^\nu$, which comes from $\Lambda$, on $(L_0,L^\nu)$ resp. $(\widetilde{L}_0,\widetilde{L}^\nu)$ such that $\Lambda^\nu$ restricts to $\widetilde{\Lambda}^\nu$. The upshot is the following result.

\begin{cor}
For $\nu\gg0$, we have an identification of Floer homotopy types: 
    \begin{equation}
    \frakF^{\widetilde{J}^g,\widetilde{\Lambda}}_{\widetilde{L}_0,\widetilde{L}_1}\equiv\frakF^{\widetilde{J}^{g_\nu},\widetilde{\Lambda}^\nu}_{\widetilde{L}_0,\widetilde{L}^\nu}\simeq\frakF^{J^\nu,\Lambda^\nu}_{L_0,L^\nu}\equiv\frakF^{J,\Lambda}_{L_0,L_1}.
    \end{equation}
\end{cor}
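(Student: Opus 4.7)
The plan is to reduce the displayed chain to a single nontrivial statement, namely the middle isomorphism $\frakF^{\tilde{J}^{g_\nu},\tilde{\Lambda}^\nu}_{\tilde{L}_0,\tilde{L}^\nu}\cong\frakF^{J^\nu,\Lambda^\nu}_{L_0,L^\nu}$, since the outer $\equiv$ signs are tautological. Indeed, the pair $(\tilde{L}_0,\tilde{L}^\nu)$ with datum $\tilde{J}^{g_\nu}$ is, by construction, obtained from $(\tilde{L}_0,\tilde{L}_1)$ via a compactly supported Hamiltonian deformation (driven by replacing $f$ with its Morse perturbation $f_\nu$) together with a compactly supported deformation of almost complex structures, and analogously on the ambient side. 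By the invariance of the Floer homotopy type under these choices (reviewed in Subsection on the Floer homotopy type construction, recalling that $\frakF$ depends only on the framed brane structure up to the data whose variation is contractible), both outer equalities hold by definition.

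For the middle isomorphism, the preceding corollary already provides an identification $\bbF^{\tilde{J}^{g_\nu}}_{\tilde{L}_0,\tilde{L}^\nu}\cong\bbF^{J^\nu}_{L_0,L^\nu}$ of unstructured flow categories, so the work reduces to upgrading this to an identification of framed flow categories. I would argue this by invoking the local nature of the Floer framing \eqref{eq:floerframing}. Recall that this framing on a morphism space $\bbF(x,y)$ is manufactured from three ingredients: (1) a choice of Floer abstract cap $T_{\frakF,x}$, $T_{\frakF,y}$ at each asymptotic intersection point; (2) the family of linearized operators $D\overline{\partial}$ along Floer trajectories, with its index bundle identified with the tangent bundle via Large's regularity/gluing analysis; and (3) the deformation of boundary conditions using the isomorphism $TM\oplus\underline{\bbC}^d\cong\Lambda\otimes_\bbR\underline{\bbC}$ together with the totally real homotopy over each $L_i$. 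By Lemma \ref{lem:localfloer}, every Floer trajectory in $\bbF^{J^\nu}_{L_0,L^\nu}$ is supported in $B_r(x)$, so the index bundle of step (2) is read off from data on $B_r(x)$ alone; for ingredient (1), the abstract caps depend only on $x^*TM$ and $x^*TL_i$, which sit inside $B_r(x)$; and for ingredient (3), the framed brane structure $\tilde{\Lambda}^\nu$ was by definition constructed as the restriction of $\Lambda^\nu$ to $B_r(x)$, so the chosen totally real homotopy literally coincides after restriction.

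Concretely, I would pick a coherent family of Floer abstract caps for each $x^\nu\in\chi(L_0,L^\nu)\cong\chi(\tilde{L}_0,\tilde{L}^\nu)$ in the local picture and transport them verbatim to the global picture using the identification $\tilde{\Lambda}^\nu=\Lambda^\nu\vert_{B_r(x)}$; the glued operators $T^*_{\frakF,x^\nu}\#D\overline{\partial}\#T_{\frakF,y^\nu}$ on the two sides then agree as operators on isomorphic pullback bundles, and the contractible deformation to an invertible operator (via Lemma \ref{lem:standardfloeroperator1}) proceeds identically. Compatibility of the resulting framings with the gluing morphisms of broken Floer trajectories follows from the standard operator gluing argument used in the construction of $\bbF^{H,J,\Lambda}_{L_0,L_1}$, since both diffeomorphisms of compactified moduli spaces are induced by the same inclusion. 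The main potential obstacle is ensuring that the contractible choices (connections, interpolating paths of symmetric operators) used in the construction of the framing can be chosen simultaneously on the two sides to make the identification strict rather than just homotopic; this is handled by first fixing the choices in the local picture and then extending them to $M$ via a partition of unity, using that all relevant data is concentrated in $B_r(x)$. Applying the functor $\flow^\fr(\unit,-)$ to the resulting isomorphism of framed flow categories then yields the desired identification of spectra.
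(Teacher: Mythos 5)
Your proposal is correct and follows essentially the same route as the paper: the outer $\equiv$'s are invariance under compactly supported Hamiltonian deformation, and the middle $\cong$ upgrades the already-established identification of unstructured flow categories to framed ones, using that $\Lambda^\nu$ restricts to $\tilde{\Lambda}^\nu$ and that all ingredients of the framing (abstract caps, index bundles of the linearized operators, boundary-condition deformations) are local to $B_r(x)$ once Lemma \ref{lem:localfloer} confines the trajectories there. The paper states this without spelling out the local-ingredient check, so your write-up simply makes explicit the intended argument.
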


\subsection{Proof of main results}
We are now ready to prove our main results. We recall the statement of Theorem \ref{thm:main1} for convenience. 

\begin{thm}[Theorem \ref{thm:main1}]
Suppose Assumption \ref{assu:main} and that $L_0$ intersects $L_1$ in finitely many (possibly degenerate) points, denoted $\{x_{s_j}\}_{1\leq j\leq k,1\leq s_j\leq\ell_j}$, such that the points of fixed index $j$ (i.e., $\{x_{s_j}\}_{1\leq s_j\leq\ell_j}$) are of a fixed action $\kappa_j\in\bbR$ which is strictly increasing (i.e., $\kappa_j<\kappa_{j+1}$ $\forall j$). For each $j$, we have that there exists integers $\{d_{s_j}\}_{1\leq s_j\leq\ell_j}\subset\bbZ$ and a cofiber sequence 
    \begin{equation}
    \frakF^{J,\Lambda,\kappa_{j-1}}_{L_0,L_1}\to\frakF^{J,\Lambda,\kappa_j}_{L_0,L_1}\to\bigvee_{s_j}\Sigma^{d_{s_j}}\Sigma^\infty\calC_{f_{s_j}},
    \end{equation}
where each $f_{s_j}$ is a smooth function on $\bbR^n$ with a single (possibly degenerate) critical point at the origin.
\end{thm}

\begin{proof}[Proof of Theorem \ref{thm:main1}]
As in the case of a single intersection, we may choose, for each $x_{s_j}$, a sufficiently small ball $B_r(x_{s_j})$. We define
    \begin{equation}
    \widetilde{L}^{s_j}_i\equiv L_i\cap B_r(x_{s_j}).
    \end{equation}
Also, we may choose a sufficiently $C^2$-small smooth function $f\in C^\infty(\widetilde{L}^{s_j}_0)$ together with an identification of $\widetilde{L}^{s_j}_1$ and
    \begin{equation}
    \Big\{\big(p,d(f_{s_j})_p\big)\Big\}\subset T^*\widetilde{L}^{s_j}_0.
    \end{equation}
In particular, $f_{s_j}$ only has a single (possibly degenerate) critical point at $x_{s_j}$ and $S_{f_{s_j}}=\{x_{s_j}\}$. Moreover, by choosing $r$ sufficiently small, we may identify $T^*\widetilde{L}^{s_j}_0$ with $T^*\bbR^n$ and $\widetilde{L}^{s_j}_0$ with $\calO_{\bbR^n}$; hence, $f_{s_j}$ may be thought of as a smooth function on $\bbR^n$ with only a single (possibly degenerate) critical point at the origin. Finally, since $T^*\bbR^n$ and $\calO_{\bbR^n}$ are contractible, any two framed brane structures for $(\widetilde{L}^{s_j}_0,\widetilde{L}^{s_j}_1)$ are equivalent after stabilization. The upshot is that the framed brane structure $\widetilde{\Lambda}^{s_j}$ for $(\widetilde{L}^{s_j}_0,\widetilde{L}^{s_j}_1)$ induced by $\Lambda$ is equivalent to $\Lambda_\std$ after a $d_{s_j}$-fold stabilization, where $d_{s_j}\in\bbZ$.

Let $g$ be a Riemannian metric on $L_0$; we abuse notation and also denote by $g$ the restriction of this Riemannian metric to $\widetilde{L}^{s_j}_0$. Again, we may perturb, via a $C^2$-small perturbation, each $(f_{s_j},g)$ into a Morse-Smale pair $(h_{s_j},g')$ on $\widetilde{L}^{s_j}_0$. This induces a compactly supported Hamiltonian deformation of $L_1$ into a Lagrangian $L_1'\subset M$ such that 
    \begin{equation}
    \chi(L_0,L_1')=\bigcup_{j,s_j}\crit(h_{s_j}),
    \end{equation}
where this intersection is transverse. We define 
    \begin{equation}
    \crit(h_{s_j})\equiv\big\{y^{s_j}_w\big\}_{1\leq w\leq w_{s_j}}.
    \end{equation}
Also, this perturbation induces a perturbation of $J$ into an almost complex structure $J'\equiv\{J_t'\}$ on $M$ which agrees with $J$ outside of $\cup_{j,s_j}B_r(x_{s_j})$ and satisfies 
    \begin{equation}
    J_t'\vert_{B_r(x_{s_j})}=\widetilde{J}^{g'}_t\vert_{B_r(x_{s_j})}.
    \end{equation}
Finally, $(L_0,L_1')$ has a framed brane structure $\Lambda'$ determined in a natural way from $\Lambda$ and the compactly supported Hamiltonian deformation of $L_1$ into $L_1'$; this restricts to a framed brane structure $(\widetilde{\Lambda}^{s_j})'$ for $\big(\widetilde{L}^{s_j}_0,(\widetilde{L}^{s_j}_1)'\big)$ which is the $d_{s_j}$-fold stabilization of $\Lambda_\std$. Here, 
    \begin{equation}
    (\widetilde{L}^{s_j}_1)'\equiv L_1'\cap B_r(x_{s_j}).
    \end{equation}

By taking the perturbation of $(f_{s_j},g)$ into $(h_{s_j},g')$ sufficiently $C^2$-small, we see that the critical values of $h_{s_j}$,
    \begin{equation}
    \Big\{h\big(y^{s_j}_w\big)\Big\}_{1\leq w\leq w_{s_j}},
    \end{equation}
are arbitrarily close to the critical value 
    \begin{equation}
    f\big(x_{s_j}\big)=\kappa_j.
    \end{equation}
In particular, the intersection points are ordered by action in the following sense: 
    \begin{equation}
    \Big\{\big\{y^{1_1}_w\big\},\ldots,\big\{y^{\ell_1}_w\big\}\Big\}<\cdots<\Big\{\big\{y^{1_k}_w\big\},\ldots,\big\{y^{\ell_k}_w\big\}\Big\}.
    \end{equation}
Hence, $\bbF^{J',\Lambda',\kappa_j}_{L_0,L_1'}$ is the full framed flow subcategory of $\bbF^{J',\Lambda'}_{L_0,L_1'}$ spanned by
    \begin{equation}
    \Big\{\big\{y^{1_1}_w\big\},\ldots,\big\{y^{\ell_1}_w\big\}\Big\}<\cdots<\Big\{\big\{y^{1_j}_w\big\},\ldots,\big\{y^{\ell_j}_w\big\}\Big\}.
    \end{equation}
Consider the inclusion
    \begin{equation}
    \bbF^{J',\Lambda',\kappa_{j-1}}_{L_0,L_1'}\to\bbF^{J',\Lambda',\kappa_j}_{L_0,L_1'};
    \end{equation}
we see the cofiber is the framed flow category $\bbF^{J',\Lambda',\kappa_j,\kappa_{j-1}}_{L_0,L_1'}$ with objects 
    \begin{equation}
    \Big\{\big\{y^{1_j}_w\big\},\ldots,\big\{y^{\ell_j}_w\big\}\Big\}
    \end{equation}
and the obvious morphisms. By a straightforward extension of the argument in Subsection \ref{subsec:localfloer}, we may compute $\bbF^{J',\Lambda',\kappa_j,\kappa_{j-1}}_{L_0,L_1'}$ locally, i.e., $\bbF^{J',\Lambda',\kappa_j,\kappa_{j-1}}_{L_0,L_1'}$ splits as a disjoint union of framed flow categories:
    \begin{equation}
    \bbF^{J',\Lambda',\kappa_j,\kappa_{j-1}}_{L_0,L_1'}\cong\coprod_{s_j}\bbF^{\widetilde{J}^{g'},(\widetilde{\Lambda}^{s_j})'}_{\widetilde{L}^{s_j}_0,(\widetilde{L}^{s_j}_1)'}.
    \end{equation}
After passing to spectra, we have the cofiber sequence 
    \begin{equation}\label{eq:co}
    \frakF^{J',\Lambda',\kappa_{j-1}}_{L_0,L_1'}\to\frakF^{J',\Lambda',\kappa_j}_{L_0,L_1'}\to\bigvee_{s_j}\frakF^{\widetilde{J}^{g'},(\widetilde{\Lambda}^{s_j})'}_{\widetilde{L}^{s_j}_0,(\widetilde{L}^{s_j}_1)'}.
    \end{equation}
Now, by using the fact that $\widetilde{\Lambda}^{s_j}$ is the $d_{s_j}$-fold stabilization of $\Lambda_\std$ and referring to Propositions \ref{prop:floermorsecorrespondence} and \ref{prop:morsehomotopytype}, we have the following chain of equivalences:
    \begin{align}
    \frakF^{\widetilde{J}^g,\widetilde{\Lambda}^{s_j}}_{\widetilde{L}^{s_j}_0,\widetilde{L}^{s_j}_1}&\simeq\Sigma^{d_{s_j}}\frakF^{\widetilde{J}^{g'},\Lambda_\std}_{\widetilde{L}^{s_j}_0,(\widetilde{L}^{s_j}_1)'} \\
    &\simeq\Sigma^{d_{s_j}}\frakF^{H^{h_{s_j}},J^{g'},\Lambda_\std}_{\calO_{\widetilde{L}^{s_j}_0},\calO_{\widetilde{L}^{s_j}_0}} \\
    &\simeq\Sigma^{d_{s_j}}\frakM^{h_{s_j},g'}_{\bbR^n} \\
    &\simeq\Sigma^{d_{s_j}}\Sigma^\infty\calC_{h_{s_j}} \\
    &\simeq\Sigma^{d_{s_j}}\Sigma^\infty\calC_{f_{s_j}}.
    \end{align}
Finally, we may replace the first two terms in \eqref{eq:co} by definition:
    \begin{equation}
    \frakF^{J,\Lambda,\kappa_{j-1}}_{L_0,L_1}\to\frakF^{J,\Lambda,\kappa_j}_{L_0,L_1}\to\bigvee_{s_j}\Sigma^{d_{s_j}}\Sigma^\infty\calC_{f_{s_j}};
    \end{equation}
this proves the theorem.
\end{proof}

We recall the statement of Theorem \ref{thm:main2} for convenience.

\begin{thm}[Theorem \ref{thm:main2}]
Recall, $2n=\dim M$. Suppose Assumption \ref{assu:main} and that there exists integers 
    \begin{equation}
    \{s_j\}_{1\leq s\leq\ell, 1\leq j\leq k(s)}\subset\bbZ,\;\;s_j\geq(n-1)/2
    \end{equation}
together with 
    \begin{equation}
    \alpha_1,\ldots,\alpha_\ell\in HF^*(L_0,L_1;\bbZ/2)
    \end{equation}
satisfying
    \begin{align}
    \forall s,\;\;\sq^{s_{k(s)}}\cdots\sq^{s_1}(\alpha_s)&\neq0, \\
    \forall s'<s,\;\;\deg\alpha_s-\deg\Big(\sq^{s'_{k(s')}}\cdots\sq^{s'_1}(\alpha_{s'})\Big)&\geq n-1,
    \end{align}
then any compactly supported Hamiltonian deformation of $(L_0,L_1)$ which intersects in a discrete set must do so in at least $\ell+\sum_s k(s)$ points of pairwise distinct action.
\end{thm}

\begin{proof}[Proof of Theorem \ref{thm:main2}]
We prove the theorem in three steps.

First, suppose we are in the $\ell=1$ case, i.e., there exists integers 
    \begin{equation}
    \{1_j\}_{1\leq j\leq k(1)}\subset\bbZ,\;\;1_j\geq(n-1)/2
    \end{equation}
together with 
    \begin{equation}
    \alpha\in HF^*(L_0,L_1;\bbZ/2)
    \end{equation}
satisfying
    \begin{equation}
    \sq^{1_{k(1)}}\cdots\sq^{1_1}(\alpha)\neq0.
    \end{equation}
(In particular, $\alpha\neq0$.) Via a compactly supported Hamiltonian deformation, we may assume $L_0$ intersects $L_1$ in finitely many (possibly degenerate) points $\{x_{w_{j'}}\}_{1\leq j'\leq k',1\leq w_{j'}\leq q_{j'}}$, such that the points of fixed index $j'$ are of a fixed action $\kappa_{j'}\in\bbR$ which is strictly increasing. We wish to show $k'$ is at least $k(1)+1$. We consider the action of the Steenrod squares on a portion of \eqref{eq:les} at the highest action $\kappa_{k'}$ (we omit various auxiliary data for brevity):
    \begin{equation}
    \begin{tikzcd}[column sep=small]
    \bigoplus_{w_{k'}}\widetilde{H}^{*-d_{w_{k'}}}(\calC_{f_{w_{k'}}})\arrow[d,"\sq^{1_1}"]\arrow[r] & HF^*_{\kappa_{k'}}(L_0,L_1)\arrow[d,"\sq^{1_1}"]\arrow[r,"i^*_{k'-1}"] & HF^*_{\kappa_{k'-1}}(L_0,L_1)\arrow[d,"\sq^{1_1}"] \\
    \vdots\arrow[d,"\sq^{1_k}"] & \vdots\arrow[d,"\sq^{1_k}"] & \vdots\arrow[d,"\sq^{1_k}"] \\
    \bigoplus_{w_{k'}}\widetilde{H}^{*-d_{w_{k'}}+\sum_j 1_j}(\calC_{f_{w_{k'}}})\arrow[r] & HF^{*+\sum_j 1_j}_{\kappa_{k'}}(L_0,L_1)\arrow[r,"i^*_{k'-1}"] & HF^{*+\sum_j 1_j}_{\kappa_{k'-1}}(L_0,L_1)
    \end{tikzcd}
    .
    \end{equation}
Note, the previous diagram commutes precisely because we have shown a relation of spectra, i.e., Theorem \ref{thm:main1}. Now, since $\calC_{f_{w_{k'}}}$ is a co-H-space and $1_1\geq(n-1)/2$, it must be the case that
    \begin{equation}\label{eqn:proofaux}
    i^*_{k'-1}(\alpha)\neq0.
    \end{equation}
This is because, if the previous equation did not hold, by exactness we have
    \begin{equation}
    \alpha\in\operatorname{image}\Bigg(\varphi:\bigoplus_{w_{k'}}\widetilde{H}^{*-d_{w_{k'}}}(\calC_{f_{w_{k'}}})\to HF^*_{\kappa_{k'}}(L_0,L_1)\Bigg).
    \end{equation}
We denote the preimage of $\alpha$ by $\alpha'$. By commutativity
    \begin{equation}
    \varphi\Big(\sq^{1_1}(\alpha')\Big)=\sq^{1_1}(\alpha)\neq0,
    \end{equation}
where the second equality is by assumption; hence, 
    \begin{equation}
    \sq^{1_1}(\alpha')\neq0.
    \end{equation}
But this is a contradiction to Proposition \ref{prop:steenrod} since $\calC_{f_{w_{k'}}}$ is a co-H-space and $1_1\geq(n-1)/2$, thus \eqref{eqn:proofaux} holds. By induction,
    \begin{equation}
    \sq^{1_j}\cdots\sq^{1_1}(i^*_{k'-1}\alpha)\neq0,\;\;\forall j\leq k(1)-1; 
    \end{equation}
now, by induction, 
    \begin{equation}
    i^*_{k'-k(1)}\cdots i^*_{k'-1}(\alpha)\neq0,
    \end{equation}
and we see $k'$ is at least $k(1)+1$.\footnote{Recall, by convention, $\frakF^{J,\Lambda,\kappa_0}_{L_0,L_1}\equiv*$; hence, $i_0:\frakF^{J,\Lambda,\kappa_0}_{L_0,L_1}\to\frakF^{J,\Lambda,\kappa_1}_{L_0,L_1}$ is the inclusion of a point.}

Second, suppose we are in the $\ell=2$ and $k(2)=0$ case, i.e., there exists integers 
    \begin{equation}
    \{1_j\}_{1\leq j\leq k(1)}\subset\bbZ,\;\;1_j\geq(n-1)/2
    \end{equation}
together with 
    \begin{equation}
    \alpha,\beta\in HF^*(L_0,L_1;\bbZ/2)
    \end{equation}
satisfying
    \begin{align}
    \sq^{1_{k(1)}}\cdots\sq^{1_1}(\alpha)&\neq0, \\
    \beta&\neq0, \\
    \deg\beta-\deg\Big(\sq^{1_{k(1)}}\cdots\sq^{1_1}(\alpha)\Big)&\geq n-1. \label{eqn:assumption2}
    \end{align}
(In particular, $\alpha\neq0$.) Via a compactly supported Hamiltonian deformation, we may assume $L_0$ intersects $L_1$ in finitely many (possibly degenerate) points $\{x_{w_{j'}}\}_{1\leq j'\leq k',1\leq w_{j'}\leq q_{j'}}$, such that the points of fixed index $j'$ are of a fixed action $\kappa_{j'}\in\bbR$ which is strictly increasing. We wish to show $k'$ is at least $k(1)+2$. Again, if we consider the action of the Steenrod squares on a portion of \eqref{eq:les} at the highest action $\kappa_{k'}$, we see
    \begin{equation}
    \sq^{1_j}\cdots\sq^{1_1}(i^*_{k'-1}\alpha)\neq0,\;\;\forall j\leq k(1)-1.
    \end{equation}
Now, since $\calC_{f_{w_{k'}}}$ is a co-H-space and we have \eqref{eqn:assumption2}, it cannot be the case that both
    \begin{equation}
    i^*_{k'-1}\beta,i^*_{k'-1}\Big(\sq^{1_{k(1)}}\cdots\sq^{1_1}(\alpha)\Big)=0.
    \end{equation}
This is because, if the previous statement were not true, then both
    \begin{equation}
    \beta,\sq^{1_{k(1)}}\cdots\sq^{1_1}(\alpha)\in\operatorname{image}\Bigg(\varphi:\bigoplus_{w_{k'}}\widetilde{H}^{*-d_{w_{k'}}}(\calC_{f_{w_{k'}}})\to HF^*_{\kappa_{k'}}(L_0,L_1)\Bigg).
    \end{equation}
But this is a contradiction to Proposition \ref{prop:support} since $\calC_{f_{w_{k'}}}$ is a co-H-space and we have \eqref{eqn:assumption2}. If $i^*_{k'-1}\beta=0$, then we are back in the previous case, and we see $k'$ is at least $k(s)+2$. If $i^*_{k'-1}\beta\neq0$, then, by induction, it follows that
    \begin{equation}
    i^*_{k'-k(1)-1}\cdots i^*_{k'-1}\beta\neq0,
    \end{equation}
and we see $k'$ is at least $k(1)+2$. 

The third case is the general case, and this follows by induction from the previous two cases; this proves the theorem.
\end{proof}

By using Example \ref{example:steenrod1}, we obtain the following result.

\begin{cor}\label{cor:dim14}
Let $\dim M=14$. Suppose Assumption \ref{assu:main} and that there exists integers 
    \begin{equation}
    \{s_j\}_{1\leq s\leq\ell, 1\leq j\leq k(s)}\subset\bbZ,\;\;s_j\geq2
    \end{equation}
together with 
    \begin{equation}
    \alpha_1,\ldots,\alpha_\ell\in HF^*(L_0,L_1;\bbZ/2)
    \end{equation}
satisfying
    \begin{align}
    \forall s,\;\;\sq^{s_{k(s)}}\cdots\sq^{s_1}(\alpha_s)&\neq0, \\
    \forall s'<s,\;\;\deg\alpha_s-\deg\Big(\sq^{s'_{k(s')}}\cdots\sq^{s'_1}(\alpha_{s'})\Big)&\geq 6,
    \end{align}
then any compactly supported Hamiltonian deformation of $(L_0,L_1)$ which intersects in a discrete set must do so in at least $\ell+\sum_s k(s)$ points of pairwise distinct action.
\end{cor}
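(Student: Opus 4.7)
The plan is to rerun the proof of Theorem \ref{thm:mainresult2} verbatim, substituting a single step. Inspecting that proof, the hypothesis $r_{j_s}\geq(n-1)/2$ enters only through the appeal to Proposition \ref{prop:steenrod}: at each stage of the induction across action levels, one uses the fact that the Steenrod squares $\sq^{r_j}$ vanish on the cofiber term $\bigoplus_{w_{j'}}\tilde{H}^{*+d_{w_{j'}}}(\calC_{f_{w_{j'}}};\bbZ/2)$ appearing in the long exact sequence \eqref{eq:les}, and Proposition \ref{prop:steenrod} supplies this vanishing exactly when $r_j\geq(n-1)/2$. All other aspects of the proof, including the reduction via local Floer theory (Subsection \ref{subsec:localfloer}) to isolated critical points of smooth functions $f_{w_{j'}}$ on $\bbR^n$, and the naturality of \eqref{eq:les} with respect to Steenrod squares (Corollary \ref{cor:maincor1}), are independent of the numerical bound on $r_j$.

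For $\dim M=14$, i.e.\ $n=7$, the local geometry around each intersection point produces isolated (possibly degenerate) critical points of smooth functions on $\bbR^7$, so the relevant Conley indices $\calC_{f_{w_{j'}}}$ are precisely of the form treated in Example \ref{example:steenrod1}. That example strengthens Proposition \ref{prop:steenrod} in dimension $7$ by establishing the vanishing of $\sq^2$ on $H^*(\calC(\{x\});\bbZ/2)$ for any such critical point $x$, which, together with Proposition \ref{prop:steenrod} in the range $j\geq3$, gives vanishing of $\sq^j$ on these cohomology groups for all $j\geq2$.

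Plugging this improved vanishing into the induction from the proof of Theorem \ref{thm:mainresult2} yields the claim: each non-vanishing composition $\sq^{r_{k_s}}\cdots\sq^{r_{1_s}}(\alpha_s)$ now only requires $r_{j_s}\geq2$ to propagate through the restriction maps $i^*_{\bullet}$, so it forces at least $k_s+1$ action levels to survive for that $\alpha_s$. The degree gap $\geq 6$ is identical to the $n=7$ specialization of the hypothesis $\geq n-1$ in Theorem \ref{thm:mainresult2}, and, via Proposition \ref{prop:support}(1), guarantees that once $\sq^{r_{k_{s'}}}\cdots\sq^{r_{1_{s'}}}(\alpha_{s'})$ has been ``used up'' by restriction, the class $\alpha_s$ still survives into a strictly lower action level, contributing independently to the count. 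Summing over $s$ produces the total $\ell+\sum_s k_s$.

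The only substantive content of this corollary beyond Theorem \ref{thm:mainresult2} is therefore the $\sq^2$-vanishing of Example \ref{example:steenrod1}; no new analytic or homotopical input is required. The main obstacle, if any, is purely notational: verifying that the bookkeeping in the three-step induction of the proof of Theorem \ref{thm:mainresult2} really does decouple the hypothesis on the numerical degrees $r_{j_s}$ from the hypothesis on the degree gaps $\geq n-1$, so that relaxing the former from $3$ to $2$ in dimension $7$ does not force any corresponding relaxation of the latter.
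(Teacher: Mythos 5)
Your proposal is correct and matches the paper's own (one-line) proof, which simply says ``By using Example \ref{example:steenrod1}, we obtain the following result.'' You have correctly identified that in the proof of Theorem \ref{thm:mainresult2}, the hypothesis $r_{j_s}\geq(n-1)/2$ is used only to invoke Proposition \ref{prop:steenrod} to kill Steenrod squares on the cofiber Conley index terms; that the degree-gap hypothesis $\geq n-1$ is used independently via Proposition \ref{prop:support}; and that Example \ref{example:steenrod1} upgrades the Steenrod-vanishing range from $j\geq 3$ to $j\geq 2$ when $n=7$, with the minimum/maximum cases already covered by the paper's Lemma before Lemma \ref{lem:steenrod}, so the two numerical hypotheses are indeed decoupled and the relaxation to $r_{j_s}\geq 2$ goes through with no other changes.
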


\subsection{An application}\label{subsec:application}
Consider the embedding $\iota:\bbC P^2\hookrightarrow S^7$. Recall, the plumbing of two disjoint copies of the unit co-disk bundle $D^*S^7$ along $\bbC P^2$ is constructed as follows. Let 
    \begin{equation}
    N_{\bbC P^2}S^7\to\bbC P^2
    \end{equation}
be the normal bundle of $\bbC P^2$ in $S^7$. By Weinstein's theorem, there exists a symplectomorphism 
    \begin{equation}
    \chi:U\subset T^*S^7\xrightarrow{\sim} V\subset\tau^*\big(N_{\bbC P^2}S^7\otimes_\bbR\underline{\bbC}\big),
    \end{equation}
where $U$ is a neighborhood of $\bbC P^2$ (thought of as inside $\calO_{S^7}$), $V$ is a neighborhood of the 0-section, and $\tau: T^*\bbC P^2\to\bbC P^2$ is the projection. By construction, there are two natural totally real subbundles
    \begin{equation}
    N_{\bbC P^2}S^7,\sqrt{-1}N_{\bbC P^2}S^7\subset N_{\bbC P^2}S^7\otimes_\bbR\underline{\bbC};
    \end{equation}
note, $\chi$ may be chosen so that 
    \begin{equation}
    \chi\big(U\cap S^7\big)=N_{\bbC P^2}S^7.
    \end{equation}
We may now glue one copy $D^*S^7_0$ to another copy $D^*S^7_1$ along $\chi(U)$ after complex multiplication by $\sqrt{-1}$. This process results in a Liouville 14-manifold $(M,\omega=d\theta)$, where $\theta$ agrees with the standard Liouville 1-form on $D^*S^7_i$ away from a neighborhood of $\bbC P^2$ and has a natural description near $\bbC P^2$. Moreover, there is a natural pair of Lagrangians $(L_0,L_1)$ in $M$, each diffeomorphic to $S^7$, which intersect cleanly along $\bbC P^2$.

Now, recall $\calO_{S^7_i}\subset T^*S^7_i$ has a natural framed brane structure $\Lambda^i_\std$ induced by the identification
    \begin{equation}
    T(T^*S^7_i)\cong\tau_i^*TS^7_i\otimes_\bbR\underline{\bbC}\equiv\Lambda^i_\std\otimes_\bbR\underline{\bbC},\;\;\tau_i:T^*S^7_i\to S^7_i
    \end{equation}
and the constant homotopy between $\Lambda^i_\std\vert_{\calO_{S^7_i}}$ and $T\calO_{S^7_i}$. 

\begin{lem}
There exists a framed brane structure $\Lambda$ for $(L_0,L_1)$ such that
    \begin{equation}
    \frakF^{J,\Lambda}_{L_0,L_1}\simeq\Sigma^\infty_+\bbC P^2.
    \end{equation}
\end{lem}

\begin{proof}
Since $(L_0,L_1)$ intersects cleanly in $\bbC P^2$, we may pass to the following local model for a moment. By \cite[Proposition 3.4.1]{Poz94}, we may assume we are in the following situation: $L\to\bbC P^2$ is a rank 3 real vector bundle and we are considering the Lagrangian pair 
    \begin{equation}
    (\calO_L,\widetilde{L}),\;\;\widetilde{L}\equiv\big\{\xi\in T^*L\vert_{\bbC P^2}:\xi\vert_{\bbC P^2}=0\big\}
    \end{equation}
in the symplectic 14-manifold $(T^*L,\omega_\std)$. Note, 
    \begin{equation}
    \calO_L\cap\widetilde{L}=\bbC P^2,
    \end{equation}
and this intersection is clean. Choose a Riemannian metric $g$ on $L$ and denote by 
    \begin{equation}
    N_{\bbC P^2}L\to\bbC P^2
    \end{equation}
the normal bundle of $\bbC P^2$ in $L$. We may identify $\widetilde{L}$ with the co-normal bundle 
    \begin{equation}
    N^*_{\bbC P^2}L\to L.
    \end{equation}
Choose a tubular neighborhood $U\subset T^*L$ of $\bbC P^2$ such that (1) $U\cap\calO_L$ is a tubular neighborhood of $\bbC P^2$ in $\calO_L$ and (2) $U\cap\widetilde{L}=D^*_{\bbC P^2}L$, where $D^*_{\bbC P^2}L\subset N^*_{\bbC P^2}L$ is the unit co-disk bundle. We have an identification 
    \begin{equation}\label{eqn:identificationconormal}
    U\cap\calO_L\xrightarrow{\sim}D_{\bbC P^2}L\cong D^*_{\bbC P^2}L,
    \end{equation}
where $D_{\bbC P^2}L\subset N_{\bbC P^2}L$ is the unit disk bundle,
via the inverse of the Riemannian exponential map. The upshot is that we have a natural choice of smooth function 
    \begin{equation}
    f\in C^\infty(U\cap\calO_L)
    \end{equation}
such that $f$ has a global minimum along $\bbC P^2$ and no other critical points; in fact, $f$ is simply
    \begin{equation}
    f(p)=d\big(p,\bbC P^2\big),
    \end{equation}
where $d\big(p,\bbC P^2\big)$ is the distance from $p$ to $\bbC P^2$ using the metric induced by $g$. By modifying $g$ appropriately, we may assume $f$ is sufficiently $C^2$-small. It follows that $U\cap\widetilde{L}$ is identified with 
    \begin{equation}
    \big\{(p,df_p)\big\}\subset T^*L.
    \end{equation}
We would like to point out that $U\cap\calO_L$ and $U\cap\widetilde{L}$ are Hamiltonian isotopic via the locally defined Hamiltonian $H^f\in C^\infty(U)$.

We now return to our global $(L_0,L_1)$ in $M$. Using the Liouville flow of $M$, we see $M$ retracts onto its Lagrangian skeleton, and the latter can be identified as the following homotopy pushout:
    \begin{equation}
    \begin{tikzcd}
    \bbC P^2\arrow[r]\arrow[d] & L_0\arrow[d] \\
    L_1\arrow[r] & L_0\cup_{\bbC P^2}L_1\simeq M
    \end{tikzcd}
    ,
    \end{equation}
where $\bbC P^2\to L_i$ is the inclusion. Consider a classifying map for $TM$: 
    \begin{equation}
    M\to BU(7).
    \end{equation}
We have that the restriction to $L_i$, 
    \begin{equation}
    L_i\simeq T^*L_i\to M\to BU(7),
    \end{equation}
is a classifying map for $T(T^*L_i)$. Using $\Lambda^i_\std$, we have a factorization
    \begin{equation}
    L_i\simeq T^*L_i\xrightarrow{\Lambda^i_\std} BO(7)\to BU(7).
    \end{equation}
We see that the restriction to $\bbC P^2$ of this factorization, for $i=0$, is homotopic to the restriction to $\bbC P^2$ of this factorization, for $i=1$, via using the locally defined Hamiltonian isotopy in the local model just discussed. The upshot is that, since $M$ is a homotopy pushout, we induce a factorization
    \begin{equation}
    M\xrightarrow{\Lambda} BO(7)\to BU(7).
    \end{equation}
Moreover, the constant homotopy between $\Lambda^i_\std\vert_{\calO_{S^7_i}}$ and $T\calO_{S^7_i}$ naturally induces a homotopy between $\Lambda\vert_{L_i}$ and $TL_i$. Finally, a straightforward extension of the argument in Subsection \ref{subsec:localfloer} shows 
    \begin{equation}
    \frakF^{J,\Lambda}_{L_0,L_1}\simeq\Sigma^\infty\calC_f\simeq\Sigma^\infty_+\bbC P^2,
    \end{equation}
where: $f\in C^\infty(W)$ is a smooth function defined in a tubular neighborhood $W\subset L_0$ of $\bbC P^2$ with a global minimum along $\bbC P^2$ and no other critical points, and the first equivalence uses Proposition \ref{prop:morsehomotopytype}.
\end{proof}

\begin{rem}
In the construction of the function $f$ in the previous lemma, we could instead use 
    \begin{equation}
    -f=-d\big(p,\bbC P^2\big).
    \end{equation}
In particular, we used $f$ in our construction of $\Lambda$. The upshot is that $-f$ is a smooth function defined in a tubular neighborhood of $\bbC P^2$ in $L_0$ with a global maximum along $\bbC P^2$ and no other critical points; this will induce a different choice of framed brane structure $\Lambda'$ for $(L_0,L_1)$ such that 
    \begin{equation}
    \frakF^{J,\Lambda'}_{L_0,L_1}\simeq\calD\big(\Sigma^\infty_+\bbC P^2\big).
    \end{equation}
Of course, our application does not care if we use $f$ or $-f$ by our computations in Subsection \ref{subsec:example}.
\end{rem}

We now go to the general construction. Consider $S^7$ with $n$-many disjoint embedded copies of $\bbC P^2$: 
    \begin{equation}
    C\equiv\bbC P^2_1\amalg\cdots\amalg\bbC P^2_n\subset S^7.
    \end{equation}
We now plumb $T^*S^7_0$ to $T^*S^7_1$ along $C$; the result is a Liouville manifold $(M,\omega)$ with a pair of Lagrangians $(L_0,L_1)$, each diffeomorphic to $S^7$, which intersect cleanly along $C$.

\begin{lem}
There exists a framed brane structure $\Lambda$ for $(L_0,L_1)$ such that 
    \begin{equation}
    \frakF^{J,\Lambda}_{L_0,L_1}\simeq\Sigma^\infty_+\big(\bbC P^2_1\vee\cdots\vee\bbC P^2_n\big).
    \end{equation}
\end{lem}

\begin{proof}
Again, we may construct a smooth function $f\in C^\infty(W)$ defined in a tubular neighborhood $W\subset L_0$ of $C$, which is a disjoint union of tubular neighborhoods of each $\bbC P^2_j$ in $L_0$, with a global minimum along $C$ and no other critical points. It follows $\frakF^{J,\Lambda}_{L_0,L_1}$ is a filtered spectrum whose associated graded is 
    \begin{equation}
    \frakF^{J,\Lambda,\kappa_j,\kappa_{j-1}}_{L_0,L_1}\simeq\Sigma^\infty_+\bbC P^2_j;
    \end{equation}
hence, the key is to show there exists no Floer trajectories connecting intersection points associated to different pieces of the associated graded. But this is straightforward, since, for purely topological reasons, there is no continuous map 
    \begin{equation}
    u:[-1,1]\times[0,1]\to M
    \end{equation}
such that 
    \begin{equation}
    u(-1,t)\in\bbC P^2_j,\;\;u(1,t)\in\bbC P^2_{j'},\;\;u(s,i)\in L_i
    \end{equation}
for $j\neq j'$. I.e., we have a non-trivial element $\gamma_{j,j'}$, for $j\neq j'$, of $\pi_1(M)$ given by starting on $\bbC P^2_j$, following $L_1$ to $\bbC P^2_{j'}$, and then returning along $L_0$.
\end{proof}

In order to apply Corollary \ref{cor:dim14}, we will have to appropriately shift the degrees of the wedge summands, and we will do this by appropriately altering $\Lambda$. 

\begin{lem}
There exists a framed brane structure $\Lambda$ for $(L_0,L_1)$ such that 
    \begin{equation}
    \frakF^{J,\Lambda}_{L_0,L_1}\simeq\Sigma^\infty_+\big(\bbC P^2_1\vee\cdots\vee\Sigma^{8(n-1)}\bbC P^2_n\big).
    \end{equation}
\end{lem}

\begin{proof}
Let $\gamma_j$, for $j\neq1$, be a non-nullhomotopic loop given by starting on $\bbC P^2_1$, following $L_1$ to $\bbC P^2_j$, and then returning along $L_0$. We have that $\gamma_j$ induces a map
    \begin{equation}
    \gamma^j:M\to S^1.
    \end{equation}
Since $H^1(L_i;\bbZ/2)=0$, we see $\gamma^j\vert_{L_i}$ is null-homotopic; we choose a null-homotopy. Consider the framed brane structure for $(L_0,L_1)$ where
    \begin{equation}
    \widetilde{\Lambda}\equiv\Lambda\oplus e^{4J\gamma^1}\underline{\bbR}\oplus\cdots\oplus e^{4(n-1)J\gamma^n}\underline{\bbR}\subset TM\oplus\underline{\bbC}^n
    \end{equation}
and the homotopy between $\widetilde{\Lambda}\vert_{L_i}$ and $TL_i\oplus\underline{\bbR}^n$ is given by the homotopy between $\Lambda\vert_{L_i}$ and $TL_i$ stabilized by the choices of null-homotopies of $\big\{\gamma^j\vert_{L_i}\big\}$. By construction, this increases the relative grading between Lagrangian Floer cohomology classes coming from $\bbC P^2_1$ and Lagrangian Floer cohomology classes coming from $\bbC P^2_j$ by $8(j-1)$; the lemma follows.
\end{proof}

By solely examining the degrees of Lagrangian Floer cohomology classes, we see that any compactly supported Hamiltonian deformation of $(L_0,L_1)$ which intersects in a discrete set must do so in at least $n$ points of pairwise distinct action. However, since there are $n$ distinct Lagrangian Floer cohomology classes with non-vanishing $\sq^2$ satisfying the appropriate degree condition, Corollary \ref{cor:dim14} shows any compactly supported Hamiltonian deformation of $(L_0,L_1)$ which intersects in a discrete set must do so in at least $2n$ points of pairwise distinct action. Finally, we cannot use the results of Hofer, Floer, or Hirschi-Porcelli resp. the quantum cap-length estimate since $(L_0,L_1)$ is not Hamiltonian isotopic resp. the quantum cap products of $(L_0,L_1)$ vanish for degree reasons.

\begin{rem}\label{rem:filling}
The determined reader may try to use the fact that the path-space of $(L_0,L_1)$ has $n$ components to compute the $2n$ lower bound; we may circumvent this as follows. Let $\gamma_{j,j+1}$ be a non-trivial element of $\pi_1(M)$ as before. We may assume 
    \begin{equation}
    \gamma_{j,j+1}:S^1\to M
    \end{equation}
is a smooth embedding whose image actually lies in the contact boundary $\partial M$. Moreover, since $\gamma_{j,j+1}$ is a map from $S^1$, its image is isotropic. A straightforward computation, using 
    \begin{equation}
    T\partial M\vert_{\gamma_{j,j+1}}\cong TS^1\oplus N_{\gamma_{j,j+1}}\partial M,
    \end{equation}
shows
    \begin{equation}
    w_1\big(N_{\gamma_{j,j+1}}\partial M\big)=0;
    \end{equation}
in particular, we may perform a subcritical Weinstein attachment along $\gamma_{j,j+1}$ to reconnect the path-space. Note, the Floer homotopy type will be invariant under subcritical Weinstein handle attachment so long as we show the framed brane structure for $(L_0,L_1)$ extends across the attached subcritical handle; hence, we will show such an extension exists. Let $\widetilde{M}$ be $M$ after subcritical Weinstein handle attachment: 
    \begin{equation}
    \widetilde{M}=M\cup_{S^1\times D^{12}}D^2\times D^{12}.
    \end{equation}
By tracing through the construction of $\Lambda$, we see that it is actually orientable, i.e.,
    \begin{equation}
    \Lambda:M\to BSO(7+n).
    \end{equation}
(This fact essentially boils down to the fact that $S^7$ is orientable.) Note, $\widetilde{M}$ is the homotopy cofiber of $\gamma_{j,j+1}$. First, we have a homotopy commutative diagram
    \begin{equation}
    \begin{tikzcd}
    S^1\arrow[r,"\gamma_{j,j+1}"]\arrow[d] & M\arrow[d]\arrow[ddr,"TM\oplus\underline{\bbC}^n", bend left=60] & \\
    *\arrow[r]\arrow[drr,bend right=60] & \widetilde{M}\arrow[dr,"T\widetilde{M}\oplus\underline{\bbC}^n",swap] & \\
    & & BU(7+n)
    \end{tikzcd}
    .
    \end{equation}
Second, since $\pi_1\big(BSO(7+n)\big)=0$, we have a homotopy commutative diagram
    \begin{equation}
    \begin{tikzcd}
    S^1\arrow[r,"\gamma_{j,j+1}"]\arrow[d] & M\arrow[d]\arrow[ddr,"\Lambda", bend left=60] & \\
    *\arrow[r]\arrow[drr,bend right=60] & \widetilde{M}\arrow[dr,"\widetilde{\Lambda}",swap] & \\
    & & BSO(7+n)
    \end{tikzcd}
    ,
    \end{equation}
where $\widetilde{\Lambda}$ is defined by the universal property of the homotopy cofiber. Third, since $\widetilde{\Lambda}\vert_M=\Lambda$ and 
    \begin{equation}
    \pi_1\big(BSO(7+n)\big)\cong0\to\pi_1\big(BU(7+n)\big)\cong0
    \end{equation}
is the trivial map, we have a homotopy commutative diagram
    \begin{equation}
    \begin{tikzcd}
    S^1\arrow[r,"\gamma_{j,j+1}"]\arrow[d] & M\arrow[d]\arrow[dddrr,"TM\oplus\underline{\bbC}^n", bend left=60] & & \\
    *\arrow[r]\arrow[ddrrr,bend right=60] & \widetilde{M}\arrow[dr,"\widetilde{\Lambda}",swap] & & \\
    & & BSO(7+n)\arrow[dr,"(\cdot)\otimes_\bbR\underline{\bbC}",swap] & \\
    & & & BU(7+n)
    \end{tikzcd}
    .
    \end{equation}
Hence, by the universal property of the homotopy cofiber, 
    \begin{equation}
    T\widetilde{M}\oplus\underline{\bbC}^n\cong\widetilde{\Lambda}\otimes_\bbR\underline{\bbC}.
    \end{equation}
Finally, since $(L_0,L_1)$ lies inside of $M\subset\widetilde{M}$, we may use the same homotopies as in $M$ to see $\widetilde{\Lambda}$ is a framed brane structure for $(L_0,L_1)$.
\end{rem}

\section{Quantum cap-length}\label{section:caplength}

We recall the statement of Theorem \ref{thm:caplength} for convenience. 

\begin{thm}[Theorem \ref{thm:caplength}]
Suppose Assumption \ref{assu:caplength} and that there exists 
    \begin{equation}
    \alpha_1,\ldots,\alpha_k\in H^*(L_i;\bbZ/2),\;\;\deg\alpha_j\geq1
    \end{equation}
and $\beta\in HF(L_0,L_1;\bbZ/2)$ such that
    \begin{equation}
    \beta*\alpha_1*\cdots*\alpha_k\neq0, 
    \end{equation}
then any compactly supported Hamiltonian deformation of $(L_0,L_1)$ which intersects in a discrete set must do so in at least $k+1$ points of pairwise distinct action.
\end{thm}

\begin{proof}
For concreteness, we will assume 
    \begin{equation}
    \alpha_1,\ldots,\alpha_k\in H^*(L_0;\bbZ/2),\;\;\deg\alpha_j\geq1.
    \end{equation}
Via a compactly supported Hamiltonian deformation, we may assume $L_0$ intersects $L_1$ in finitely many (possibly degenerate) points $\{x_{w_{j'}}\}_{1\leq j'\leq k',1\leq w_{j'}\leq q_{j'}}$, such that the points of fixed index $j'$ are of a fixed action $\kappa_{j'}\in\bbR$ which is strictly increasing. We wish to show $k'$ is at least $k+1$. (Note, since $\beta\neq0$, we see $k'$ is at least 1.) 

As in the case of a single intersection, we may choose, for each $x_{s_j}$, a sufficiently small ball $B_r(x_{s_j})$. We define
    \begin{equation}
    \widetilde{L}^{s_j}_i\equiv L_i\cap B_r(x_{s_j}).
    \end{equation}
Also, we may choose a sufficiently $C^2$-small smooth function $f\in C^\infty(\widetilde{L}^{s_j}_0)$ together with an identification of $\widetilde{L}^{s_j}_1$ and
    \begin{equation}
    \Big\{\big(p,d(f_{s_j})_p\big)\Big\}\subset T^*\widetilde{L}^{s_j}_0.
    \end{equation}
In particular, $f_{s_j}$ only has a single (possibly degenerate) critical point at $x_{s_j}$ and $S_{f_{s_j}}=\{x_{s_j}\}$. Moreover, by choosing $r$ sufficiently small, we may identify $T^*\widetilde{L}^{s_j}_0$ with $T^*\bbR^n$ and $\widetilde{L}^{s_j}_0$ with $\calO_{\bbR^n}$; hence, $f_{s_j}$ may be thought of as a smooth function on $\bbR^n$ with only a single (possibly degenerate) critical point at the origin.

Let $g$ be a Riemannian metric on $L_0$; we abuse notation and also denote by $g$ the restriction of this Riemannian metric to $\widetilde{L}^{s_j}_0$. Again, we may perturb, via a $C^2$-small perturbation, each $(f_{s_j},g)$ into a Morse-Smale pair $(h_{s_j},g')$ on $\widetilde{L}^{s_j}_0$. This induces a compactly supported Hamiltonian deformation of $L_1$ into a Lagrangian $L_1'\subset M$ such that 
    \begin{equation}
    \chi(L_0,L_1')=\bigcup_{j,s_j}\crit(h_{s_j}),
    \end{equation}
where this intersection is transverse. We define 
    \begin{equation}
    \crit(h_{s_j})\equiv\big\{y^{s_j}_w\big\}_{1\leq w\leq w_{s_j}}.
    \end{equation}
Also, this perturbation induces a perturbation of $J$ into an almost complex structure $J'\equiv\{J_t'\}$ on $M$ which agrees with $J$ outside of $\cup_{j,s_j}B_r(x_{s_j})$ and satisfies 
    \begin{equation}
    J_t'\vert_{B_r(x_{s_j})}=\widetilde{J}^{g'}_t\vert_{B_r(x_{s_j})}.
    \end{equation}

By taking the perturbation of $(f_{s_j},g)$ into $(h_{s_j},g')$ sufficiently $C^2$-small, we see that the critical values of $h_{s_j}$,
    \begin{equation}
    \Big\{h\big(y^{s_j}_w\big)\Big\}_{1\leq w\leq w_{s_j}},
    \end{equation}
are arbitrarily close to the critical value 
    \begin{equation}
    f\big(x_{s_j}\big)=\kappa_j.
    \end{equation}
In particular, the intersection points are ordered by action in the following sense: 
    \begin{equation}
    \Big\{\big\{y^{1_1}_w\big\},\ldots,\big\{y^{\ell_1}_w\big\}\Big\}<\cdots<\Big\{\big\{y^{1_k}_w\big\},\ldots,\big\{y^{\ell_k}_w\big\}\Big\}.
    \end{equation}
By Lemma \ref{lem:localfloer}, the Lagrangian Floer theory of $(L_0,L_1)$ near $x_{s_j}$ is local. In particular, we have a long exact sequence (we omit various auxiliary choices for brevity)
    \begin{equation}\label{eqn:les8}
    HF_{\kappa_{j'-1}}(L_0,L_1')\xrightarrow{i^{j'-1}_*}HF_{\kappa_{j'}}(L_0,L_1')\to\bigoplus_{s_{j'}}HM_*(\widetilde{L}^{s_{j'}}_0;h_{s_{j'}},g').
    \end{equation}

Let $(F,G)$ be a Morse-Smale pair on $L_0$. Recall, the quantum cap product is induced by a chain-level morphism
    \begin{equation}
    CF(L_0,L_1';J';\bbZ/2)\otimes_{\bbZ/2}CM^*(L_0;F,G;\bbZ/2)\to CF(L_0,L_1';J';\bbZ/2),
    \end{equation}
where $CM^*(L_0;F,G;\bbZ/2)$ is the Morse cochain complex, defined by 
    \begin{equation}
    \big(y_-,a\big)\mapsto\sum_{\mu(y_-,y_+)-I(a)=0}\big\lvert\bbF^{J'}_{L_0,L_1'}(y_-,y_+,a)\big\rvert_{\bbZ/2}\cdot y_+,
    \end{equation}
where $\mu(\cdot,\cdot)$ is the relative Maslov index and 
    \begin{equation}
    \bbF^{J'}_{L_0,L_1'}(y_-,y_+,a)
    \end{equation}
is the Floer-theoretic compactification of the moduli space consisting of Floer trajectories $u\in\widehat{\calF}^{J'}_{L_0,L_1'}(y_-,y_+)$ such that $u(0,0)$ is contained in $W^s(a;F)$. Consider the map 
    \begin{equation}
    HM_*(\widetilde{L}^{s_{j'}}_0;h_{s_{j'}},g')\otimes_{\bbZ/2}HM^*(L_0;F,G)\to HM_*(\widetilde{L}^{s_{j'}}_0;h_{s_{j'}},g')
    \end{equation}
induced by the chain-level map 
    \begin{equation}
    \big(y_-,a\big)\mapsto\sum_{I(y_-)-I(y_+)-I(a)=0}\big\lvert\bbM^{h_{s_{j'}},g'}_{\widetilde{L}_0^{s_{j'}}}(y_-,y_+,a)\big\vert_{\bbZ/2}\cdot y_+,
    \end{equation}
where 
    \begin{equation}
    \bbM^{h_{s_{j'}},g'}_{\widetilde{L}_0^{s_{j'}}}(y_-,y_+,a)
    \end{equation}
is the Morse-theoretic compactification of the moduli space consisting of gradient flows $\gamma\in\widehat{\calM}^{h_{s_{j'}},g'}_{\widetilde{L}_0^{s_{j'}}}(y_-,y_+)$ such that $\gamma(0)$ is contained in $W^s(a;F)$; this is the Morse cap product. By construction, \eqref{eqn:les8} is compatible with the quantum cap product and the Morse cap product. We may choose $F$ such that its restriction to $\widetilde{L}^{s_{j'}}_0\cong\bbR^n$ is a function on $\bbR^n$ whose only critical point is a single non-degenerate index 0 critical point at the origin (in particular, the (negative) gradient flow of the restriction of $F$ points radially inward toward the origin and the restriction of $(F,G)$ is Morse-Smale). Thus, we have that the Morse cap product actually factors via restriction:
    \begin{equation}\label{eqn:factors}
    \begin{tikzcd}
    HM_*(\widetilde{L}^{s_{j'}}_0;h_{s_{j'}},g')\otimes_{\bbZ/2}HM^*(L_0;F,G)\arrow[d,"\mathrm{Id}\otimes\mathrm{res}"]\arrow[dr] & \\
    HM_*(\widetilde{L}^{s_{j'}}_0;h_{s_{j'}},g')\otimes_{\bbZ/2}HM^*(\widetilde{L}^{s_{j'}}_0;F,G)\arrow[r] & HM_*(\widetilde{L}^{s_{j'}}_0;h_{s_{j'}},g')
    \end{tikzcd}
    ,
    \end{equation}
where the bottom arrow is, again, a Morse cap product. Finally, we observe that, by construction,
    \begin{equation}
    HM^*(\widetilde{L}^{s_{j'}}_0;F,G;\bbZ/2)\cong\widetilde{H}^*(\calC_F;\bbZ/2)\cong\widetilde{H}^*(S^0;\bbZ/2).
    \end{equation}

Proving the theorem is now straightforward. Since we have a non-zero quantum cap product $\beta*\alpha_1$, $\deg\alpha_1\geq1$, it must be the case that 
    \begin{equation}
    \beta*\alpha_1\in\operatorname{image}i^{k'-1}_*,
    \end{equation}
as follows. Push $\beta$ to $\oplus_{s_{k'}}HM_*(\widetilde{L}^{s_{k'}}_0;h_{s_{k'}},g')$ using 
    \begin{equation}
    \varphi:HF_{\kappa_{k'}}(L_0,L_1')\to\bigoplus_{s_{j'}}HM_*(\widetilde{L}^{s_{k'}}_0;h_{s_{j'}},g').
    \end{equation}
Since each $HM^*(\widetilde{L}^{s_{k'}}_0;F,G;\bbZ/2)$ is concentrated in degree 0 and $\deg\alpha_1\geq1$, it follows that 
    \begin{equation}
    \mathrm{res}(\alpha_1)\in HM^*(\widetilde{L}^{s_{k'}}_0;F,G;\bbZ/2)
    \end{equation}
vanishes. Finally, since \eqref{eqn:les8} is compatible with the quantum cap product and the Morse cap product, and we have seen the Morse cap product factors via \eqref{eqn:factors}, it must be the case that 
    \begin{equation}
    \beta*\alpha_1\in\ker\varphi=\operatorname{image}i^{k'-1}_*,
    \end{equation}
as desired. Hence, $k'$ is at least 2; the theorem follows by induction.
\end{proof}

\bibliography{References}{}

@article{Abo10,
	author = {Abouzaid, M.},
	journal = {Publications math\'ematiques de l'IH\'ES},
    pages = {191-240},
	title = {{A geometric criterion for generating the Fukaya category}},
	volume = {112},
	year = {2010}}

@article{AS10a,
	author = {Abouzaid, M. and Seidel, P.},
	journal = {Geometry \& Topology},
    pages = {627-718},
	title = {{An open string analogue of Viterbo functoriality}},
	volume = {14},
	year = {2010}}

@article{Auy23,
	author = {Auyeung, S.},
	journal = {Journal of Topology and Analysis},
	title = {{Local Lagrangian Floer homology of quasi-minimally degenerate intersections}},
	year = {2023}}

@article{BH60,
	author = {Berstein, I. and Hilton, P.},
	journal = {Illinois Journal of Mathematics},
	number = {3},
	pages = {437-451},
	title = {{Category and generalized Hopf invaraints}},
	volume = {4},
	year = {1960}}

@article{CJS95,
	author = {Cohen, R. and Jones, J. and Segal, G.},
	journal = {The Floer Memorial Volume, Progress in Mathematics},
	pages = {297-325},
	title = {{Floer's infinite dimensional Morse theory and homotopy theory}},
	volume = {133},
	year = {1995}}

@article{Dan87,
    author = {Dancer, E.N.},
    journal = {Journal für die reine und angewandte Mathematik},
    pages = {145-164},
    title = {{Degenerate critical points, homotopy indices and Morse inequalities. II}},
    volume = {382},
    year = {1987},
    }

@article{FH93,
	author = {Floer, A. and Hofer, H.},
	journal = {Mathematische Zeitschrift},
	number = {},
	pages = {13-38},
	title = {{Coherent orientations for periodic problems in symplectic geometry}},
	volume = {212},
	year = {1993}}

@article{Flo88b,
	author = {Floer, A.},
	journal = {Journal of Differential Geometry},
	number = {3},
	pages = {513-547},
	title = {{Morse theory for Lagrangian intersections}},
	volume = {38},
	year = {1988}}

@article{Flo89a,
	author = {Floer, A.},
	journal = {Communications on Pure and Applied Mathematics},
	number = {4},
	pages = {335-356},
	title = {{Cuplength estimates on Lagrangian intersections}},
	volume = {42},
	year = {1989}}

@article{Flo89b,
	author = {Floer, A.},
	journal = {Communications in Mathematical Physics},
	number = {4},
	pages = {575-611},
	title = {{Symplectic fixed points and holomorphic spheres}},
	volume = {120},
	year = {1989}}

@article{Flo89c,
	author = {Floer, A.},
	journal = {Journal of Differential Geometry},
	number = {1},
	pages = {207-221},
	title = {{Witten's complex and infinite-dimensional Morse theory}},
	volume = {30},
	year = {1989}}

@article{Fuq94,
	author = {Fuquan, F.},
	journal = {Topology},
	number = {3},
	pages = {447-454},
	title = {{Embedding four manifolds in R7}},
	volume = {33},
	year = {1994}}

@article{Gan70,
	author = {Ganea, T.},
	journal = {Inventiones mathematicae},
	number = {},
	pages = {185-197},
	title = {{Cogroups and suspensions}},
	volume = {9},
	year = {1970}}

@article{GG10,
	author = {Ginzburg, V. L. and G\"{u}rel, B. Z.},
	journal = {Journal of Symplectic Geometry},
	pages = {232-257},
	title = {{Local Floer homology and the action gap}},
	volume = {8},
        number = {3},
	year = {2010}}

@article{Hof88,
     author = {Hofer, H.},
     title = {{Lusternik-Schnirelman-theory for Lagrangian intersections}},
     journal = {Annales de l'Institut Henri Poincar\'e. Analyse non lin\'eaire},
     pages = {465-499},
     volume = {5},
     number = {5},
     year = {1988},
}

@article{Lau00,
	author = {Laures, G.},
	journal = {Transactions of the American Mathematical Society},
	number = {12},
	pages = {5667-5688},
	title = {On cobordism of manifolds with corners},
	volume = {352},
	year = {2000}}

@article{LS14,
	author = {Lipshitz, R. and Sarkar, S.},
	journal = {Journal of the American Mathematical Society},
    number = {4},
	pages = {983-1042},
	title = {A {Khovanov stable} homotopy type},
    volume = {27}, 
	year = {2014}}

@article{LS85,
	author = {Laudenbach, F. and Sikorav, J.-C.},
	journal = {Inventiones mathematicae},
	pages = {349-357},
	title = {{Persistance d'intersection avec la section nulle au cours d'une isotopie hamiltonienne dans un fibré cotangent}},
	volume = {82},
	year = {1985}}

@article{Mau68,
	author = {Maunder, C.R.F.},
	journal = {Mathematical Proceedings of the Cambridge Philosophical Society},
    number = {1},
	pages = {15-30},
	title = {{Cohomology operations and duality}},
    volume = {64}, 
	year = {1968}}

@article{McC92,
	author = {McCord, C.},
	journal = {Transactions of the American Mathematical Society},
    number = {1},
	pages = {233-252},
	title = {{Poincar\'e-Lefschetz duality for the homology Conley index}},
    volume = {329}, 
	year = {1992}}

@article{Oh96,
	author = {Oh, Y-G},
	journal = {International Mathematics Research Notices},
	number = {7},
	pages = {305-346},
	title = {{Floer cohomology, spectral sequences, and the Maslov class of Lagrangian embeddings}},
	year = {1996}}

@article{Por22,
	author = {Porcelli, N.},
	journal = {Selecta Mathematica},
    number = {21},
	title = {{Families of relatively exact Lagrangians, free loop spaces and generalised homology}},
    volume = {30}, 
	year = {2022}}

@article{PS25c,
	author = {Porcelli, N. and Smith, I.},
	journal = {to appear in Transactions of the American Mathematical Society},
	title = {{Spectral Floer theory and tangential structures}}, 
	year = {2025}}

@article{RS88,
	author = {Robbin, J. and Salamon, D.},
	journal = {Ergodic Theory and Dyanmical Systems},
	number = {},
	pages = {375-393},
	title = {{Dynamical systems, shape theory and the Conley index}},
	volume = {8},
	year = {1988}}

@article{RS95,
	author = {Robbin, J. and Salamon, D.},
	journal = {Bulletin of the London Mathematical Society},
	number = {1},
	pages = {1-33},
	title = {{The spectral flow and the Maslov index}},
	volume = {27},
	year = {1995}}

@article{Sal85,
	author = {Salamon, D.},
	journal = {Transactions of the American Mathematical Society},
	number = {1},
	pages = {1-41},
	title = {{Connected simple systems and the Conley index of isolated invariant sets}},
	volume = {291},
	year = {1985}}

@article{Sei00,
     author = {Seidel, P.},
     title = {{Graded Lagrangian submanifolds}},
     journal = {Bulletin de la Soci\'et\'e Math\'ematique de France},
     pages = {103-149},
     volume = {128},
     number = {1},
     year = {2000}}

@article{Tho52,
	author = {Thom, R.},
	journal = {Annales scientifiques de l'\'Ecole Normale Sup\'erieure},
    number = {3},
	pages = {109-182},
	title = {{Espaces fibr\'es en sph\`eres et carr\'es de Steenrod}},
    volume = {69}, 
	year = {1952}}

@unpublished{AGLW25,
	author = {\'Alvarez-Gavela, D. and Large, T. and Ward, A.},
	note = {arXiv:2503.09783},
	title = {{On arborealization, Maslov data, and lack thereof}},
	year = {2025}}

@book{Ati67,
	author = {Atiyah, M.F.},
	publisher = {},
	title = {{K-theory}},
	year = {1967},
	}

@book{BG99,
	author = {Becker, J.C. and Gottlieb, D.H.},
	publisher = {North-Holland, Amsterdam},
	title = {{A history of duality in algebraic topology}},
	year = {1999},
    note = {In \emph{History of topology}, pages 725-745},
	}

@book{CLOT03,
	author = {Cornea, O. and Lupton, G. and Oprea, J. and Tanr\'e, D.},
	publisher = {Mathematical Surveys and Monographs, Vol. 103, American Mathematical Society},
	title = {{Lusternik-Schnirelmann category}},
	year = {2003},
	}

@book{Con78,
	author = {Conley, C.},
	publisher = {CBMS Regional Conference Series in Mathematics, Vol. 38, American Mathematical Society},
	title = {{Isolated invariant sets and the Morse index}},
	year = {1978},
	}

@book{FOOO09,
	author = {Fukaya, K. and Oh, Y-G. and Ohta, H. and Ono, K.},
	publisher = {American Mathematical Society},
	title = {{Lagrangian intersection Floer theory: anomaly and obstruction}},
	year = {2009},
	}

@book{Sch93,
	author = {Schwarz, M.},
	publisher = {Progress in Mathematics, Vol. 111, Springer},
	title = {{Morse homology}},
	year = {1993},
	}

@unpublished{AB21,
	author = {Abouzaid, M. and Blumberg, A.},
	note = {arXiv:2103.01507},
	title = {Arnold conjecture and {Morava K-theory}},
	year = {2021}}

@unpublished{AB24,
	author = {Abouzaid, M. and Blumberg, A.},
	note = {arXiv:2404.03193},
	title = {{Foundation of Floer homotopy theory I: flow categories}},
	year = {2024}}

@unpublished{Alb06,
	author = {Albers, P.},
	note = {arXiv:0606600},
	title = {{A note on local Floer homology}},
	year = {2006}}

@unpublished{BB25,
	author = {Blakey, K. and Bonciocat, C.M.},
	note = {arXiv:2506.20122},
	title = {{Parametrized Lagrangian Floer homotopy}},
	year = {2025}}

@unpublished{Bon24,
	author = {Bonciocat, C.M.},
	note = {arXiv:2409.11278},
	title = {{Revisiting the Cohen-Jones-Segal construction in Morse-Bott theory}},
	year = {2024}}

@unpublished{Bon25,
	author = {Bonciocat, C.M.},
	note = {arXiv:2506.17431},
	title = {{Floer homotopy theory for monotone Lagrangians}},
	year = {2025}}

@unpublished{CK23,
	author = {C{\^o}t{\'e}, L. and Kartal, Y. B.},
	note = {arXiv:2309.15089},
	title = {Equivariant {Floer} homotopy via {Morse-Bott} theory},
	year = {2023}}

@unpublished{HP22,
	author = {Hirschi, A. and Porcelli, N.},
	note = {arXiv:2211.07559},
	title = {Lagrangian intersections and cuplength in generalised cohomology theories},
	year = {2022}}

@unpublished{Lar21,
	author = {Large, T.},
	note = {MIT Ph.D. Thesis},
	title = {Spectral {Fukaya} categories for {Liouville} manifolds},
	year = {2021}}

@unpublished{Pea94,
	author = {Pears, J.R.},
	note = {University of Edinburgh Ph.D. Thesis},
	title = {{Degenerate critical points and the Conley index}},
	year = {1994}}

@unpublished{Poz94,
	author = {Po{\'z}niak, M.},
	note = {University of Warwick Ph.D. Thesis},
	title = {{Floer homology, Novikov rings, and clean intersection}},
	year = {1994}}

@unpublished{PS24,
	author = {Porcelli, N. and Smith, I.},
	note = {arXiv:2401.11766},
	title = {{Bordism of flow modules and exact Lagrangians}},
	year = {2024}}

@unpublished{PS25b,
	author = {Porcelli, N. and Smith, I.},
	note = {arXiv:2509.21483},
	title = {{Open-closed maps and spectral local systems}},
	year = {2025}}

@unpublished{PS25a,
	author = {Porcelli, N. and Smith, I.},
	note = {arXiv:2509.21587},
	title = {{Bordism from quasi-isomorphism}},
	year = {2025}}

@unpublished{Rez24,
	author = {Rezchikov, S.},
	note = {arXiv:2405.18370},
	title = {{Cyclotomic structures in symplectic topology}},
	year = {2024}}
\bibliographystyle{alpha.bst}
\end{document}